\providecommand{\U}[1]{\protect\rule{.1in}{.1in}}
\newtheorem{theorem}{Theorem}[section]
\newtheorem{corollary}[theorem]{Corollary}
\newtheorem{lemma}[theorem]{Lemma}
\newtheorem{proposition}[theorem]{Proposition}
\newtheorem{definition}[theorem]{Definition}
\newtheorem{remark}{Remark}
\newtheorem{assumption}{Assumption}
\numberwithin{equation}{section}
\newcommand{\dist}{\mathsf{dist}}
\newcommand{\Diag}{\mathsf{Diag}}
\newcommand{\tr}{\mathsf{tr}}
\newcommand{\dom}{\mathrm{dom}}
\newcommand{\st}{\mathrm{s.\,t.}\,\,}
\newcommand{\grad}{\mathsf{grad}}
\newcommand{\bK}{\mathbb{K}}
\newcommand{\bN}{\mathbb{N}}
\newcommand{\bR}{\mathbb{R}}
\newcommand{\cC}{\mathcal{C}}
\newcommand{\cN}{\mathcal{N}}
\newcommand{\cO}{\mathcal{O}}
\newcommand{\cS}{\mathcal{S}}
\newcommand{\cU}{\mathcal{U}}
\newcommand{\Rn}{\mathbb{R}^{n}}
\newcommand{\Rnp}{\mathbb{R}^{n \times p}}
\newcommand{\Rnn}{\mathbb{R}^{n \times n}}  
\newcommand{\Rnm}{\mathbb{R}^{n \times m}}
\newcommand{\Rmn}{\mathbb{R}^{m \times n}}
\newcommand{\Rpnp}{\mathbb{R}^{n \times p}_{+}}
\newcommand{\Sphn}{\mathcal{O}^{n,1}}
\newcommand{\Sphpn}{\mathcal{O}^{n,1}_{+}}
\newcommand{\Onp}{\mathcal{O}^{n,p}}
\newcommand{\Opp}{\mathcal{O}^{p,p}}
\newcommand{\Opnp}{\mathcal{O}^{n,p}_{+}}
\newcommand{\Opnn}{\mathcal{O}^{n,n}_{+}}
\newcommand{\Qnp}{\mathcal{Q}^{n,p}}
\newcommand{\Qpnp}{\mathcal{Q}^{n,p}_{+}}
\newcommand{\zz}{^{\top}}
\newcommand{\ff}{_{\mathsf{F}}}
\newcommand{\ffs}{^2_{\mathsf{F}}}
\newcommand{\sign}{\mathsf{sign}}
\newcommand{\zrow}{\mathsf{zrow}}
\newcommand{\srow}{\mathsf{srow}}
\newcommand{\supp}{\mathsf{supp}}
\newcommand{\map}{\mathsf{map}}
\newcommand{\alg}{\mathsf{alg}}
\newcommand{\opt}{\mathsf{opt}}
\newcommand{\init}{\mathsf{init}}
\newcommand{\uast}{^{\ast}}
\newcommand{\last}{_{\ast}}
\newcommand{\dkh}[1]{\left(#1\right)}
\newcommand{\hkh}[1]{\left\{#1\right\}}
\newcommand{\jkh}[1]{\left\langle#1\right\rangle}
\newcommand{\norm}[1]{\left\|#1\right\|}
\newcommand{\abs}[1]{\left\lvert #1\right\rvert}
\newcommand{\SUPPORT}{\texttt{Support-Set}\xspace}
\newcommand{\EPOrth}{\texttt{EP4Orth+}\xspace}
\newcommand{\SEPPGz}{\texttt{SEPPG0}\xspace}
\newcommand{\SEPPGp}{\texttt{SEPPG+}\xspace}
\newcommand{\OptM}{\texttt{OptM}\xspace}
\definecolor{Gray}{rgb}{0.5,0.5,0.5}
\DeclareMathOperator*{\argmin}{arg\,min}
\newcommand{\Rmnum}[1]{\expandafter\@slowromancap\romannumeral #1@}
\newcommand*\patchAmsMathEnvironmentForLineno[1]{
	\expandafter\let\csname old#1\expandafter\endcsname\csname#1\endcsname
	\expandafter\let\csname oldend#1\expandafter\endcsname\csname end#1\endcsname
	\renewenvironment{#1}
	{\linenomath\csname old#1\endcsname}
	{\csname oldend#1\endcsname\endlinenomath}
}
\newcommand*\patchBothAmsMathEnvironmentsForLineno[1]{
	\patchAmsMathEnvironmentForLineno{#1}
	\patchAmsMathEnvironmentForLineno{#1*}
}
\title{A Support-Set Algorithm for Optimization Problems with Nonnegative and Orthogonal Constraints\thanks{This work is supported by National Key R\&D Program of China (2023YFA1009300), RGC grant JLFS/P-501/24 for the CAS AMSS-PolyU Joint Laboratory in Applied Mathematics, CAS-Croucher Funding Scheme for Joint Laboratories, Hong Kong Research Grant Council project PolyU15300024, and National Natural Science Foundation of China (12125108, 12021001, 12288201).}}
\author{
	Lei Wang\thanks{Department of Applied Mathematics, The Hong Kong Polytechnic University, Hung Hom, Kowloon, Hong Kong, China (\href{mailto:wlkings@lsec.cc.ac.cn}{wlkings@lsec.cc.ac.cn}).}
	\and Xin Liu\thanks{State Key Laboratory of Mathematical Sciences, Academy of Mathematics and Systems Science, Chinese Academy of Sciences, and University of Chinese Academy of Sciences, Beijing, China (\href{mailto:liuxin@lsec.cc.ac.cn}{liuxin@lsec.cc.ac.cn}).}
	\and Xiaojun Chen\thanks{Department of Applied Mathematics, The Hong Kong Polytechnic University, Hung Hom, Kowloon, Hong Kong, China (\href{mailto:maxjchen@polyu.edu.hk}{maxjchen@polyu.edu.hk}).}
}
\date{}
\begin{document}

\maketitle

\begin{abstract}
	In this paper, we investigate optimization problems with nonnegative and orthogonal constraints, where any feasible matrix of size $n \times p$ exhibits a sparsity pattern such that each row accommodates at most one nonzero entry. Our analysis demonstrates that, by fixing the support set, the global solution of the minimization subproblem for the proximal linearization of the objective function can be computed in closed form with at most $n$ nonzero entries. Exploiting this structural property offers a powerful avenue for dramatically enhancing computational efficiency. Guided by this insight, we propose a support-set algorithm preserving strictly the feasibility of iterates. A central ingredient is a strategically devised update scheme for support sets that adjusts the placement of nonzero entries. We establish the convergence of the support-set algorithm to a first-order stationary point, and show that its iteration complexity required to reach an $\epsilon$-approximate first-order stationary point is $O (\epsilon^{-2})$. Numerical results are strongly in favor of our algorithm in real-world applications, including nonnegative PCA, clustering, and community detection. 
\end{abstract}



\section{Introduction}

Our focus of this paper is on the optimization problems with nonnegative and orthogonal constraints of the following form,
\begin{equation}
	\label{opt:stplus} 
	\begin{aligned}
		\min_{X \in \Rnp} \hspace{2mm} & f (X) \\
		\st \hspace{3.5mm} & X\zz X = I_p, \; X \geq 0,
	\end{aligned}
\end{equation}
where $f: \Rnp \to \bR$ is the objective function, $I_p$ is the $p \times p$ identity matrix, and the notation $X \geq 0$ represents the entrywise nonnegativity of $X$.
The feasible set of problem \eqref{opt:stplus} is denoted as $\Opnp := \Onp \cap \Rpnp$, where $\Onp := \{X \in \Rnp \mid X\zz X = I_p\}$ is the Stiefel manifold \cite{Absil2008optimization,Boumal2023introduction} in $\Rnp$ and $\Rpnp := \{X \in \Rnp \mid X \geq 0\}$ is the cone of nonnegative matrices in $\Rnp$.
Throughout this paper, we make the following blanket assumption on problem \eqref{opt:stplus}.

\begin{assumption}
	\label{asp:objective}
	The function $f$ is continuously differentiable and its Euclidean gradient $\nabla f$ is Lipschitz continuous over $\Onp$ with the corresponding Lipschitz constant $L \geq 0$.
\end{assumption}

Recently, problems of the form \eqref{opt:stplus} have captured a wide variety of applications and interests in machine learning and data science, such as nonnegative principal component analysis (PCA) \cite{Montanari2015non,Zass2006nonnegative}, nonnegative Laplacian embedding \cite{Luo2009non,Zhang2019nonnegative}, spectral clustering \cite{Boutsidis2009unsupervised,Carson2017manifold,Yang2012discriminative}, and orthogonal nonnegative matrix factorization (ONMF) \cite{Ding2006orthogonal,Kuang2012symmetric,Yang2010linear}.
In particular, problem \eqref{opt:stplus} covers some classical NP-hard problems as special cases, including the problem of checking copositivity of a symmetric matrix \cite{Hiriart2010variational,Povh2007copositive} and the quadratic assignment problem \cite{Lawler1963quadratic,Xia2006new}.

In this paper, we devote our attention to the regime where $1< p < n$.
When $p = 1$, the Stiefel manifold reduces to the unit sphere $\Sphn := \{x \in \Rn \mid x\zz x = 1\}$.
For this special case, the linear independence constraint qualification (LICQ) is satisfied (see \cite[Definition 12.4]{Nocedal2006numerical}).
And we can solve this problem by resorting to the classical projected gradient method, under which the projection onto $\Sphpn$ enjoys an explicit analytical form \cite{Zhang2018sparse}.
In comparison, for $p > 1$, these properties generally fail to hold.
When $p = n$, the feasible set $\Opnn$ coincides with the collection of all $n \times n$ permutation matrices.
Consequently, problem \eqref{opt:stplus} with $p = n$ essentially takes on a discrete nature, requiring specialized strategies to attain high-quality solutions.
Furthermore, such problems can be treated by penalty-based approaches \cite{Jiang2016lp}.
For further insights into this setting, we refer interested readers to \cite{Chen2025tight,Jiang2016lp,Qian2024error}.

The nonnegativity in $\Opnp$ destroys the smoothness of $\Onp$ and introduces some combinatorial features.
For a matrix $X \in \Opnp$, \textit{each row has at most one nonzero entry}, and hence, the total number of nonzero entries is at most $n$.
This property arises from the structure that any two nonzero entries within the same row---both strictly positive---would unavoidably disrupt the orthogonality across columns.
We define the \textit{support set} of a matrix as the collection of positions corresponding to its nonzero entries.
Viewed through this lens, the feasible set $\Opnp$ can be partitioned into a finite union of subsets, each distinguished by a unique pattern of the support set.
Once the support set is fixed, problem \eqref{opt:stplus} can be reformulated and addressed in a lower-dimensional space, as its dimensionality is effectively reduced from $np$ to $n$.
Then it is foreseeable that such a reformulation can lead to a substantial enhancement in computational efficiency.
As we shall see in the next subsection, the vast majority of prevailing algorithms leverage infeasible strategies to solve problem \eqref{opt:stplus}, without explicitly accounting for the sparsity structure of $\Opnp$.

Motivated by these observations, we aim to develop a theoretically sound and practically viable algorithm capable of navigating among different support sets.
Unfortunately, this task entails a formidable challenge, as the total number of possible support sets grows exponentially with $n$.
Beyond this, no practical mechanism is available to impose both nonnegativity and orthogonality at the same time, which constitutes a profound obstacle to the effective update of the support set.
In particular, the projection onto $\Opnp$ admits no closed-form expression, and currently one can only resort to infeasible approaches to tackle the associated optimization model.
To break through these impasses, we will take full advantage of the structural property inherent in nonnegative and orthogonal matrices.

\subsection{Prior and Related Works}

\label{subsec:prior}

Although optimization problems over the Stiefel manifold have been extensively explored \cite{Absil2008optimization,Wang2025decentralized, Wang2021multipliers,Wang2025distributionally,Wen2013feasible}, the investigation of algorithms for problem \eqref{opt:stplus} is still restricted that exhibit provable convergence guarantees.
Broadly speaking, existing algorithms can be categorized into two classes.
The first class is tailored to specific instances of problem \eqref{opt:stplus} and can hardly be adapted to the generic setting, including multiplicative update schemes \cite{Ding2006orthogonal,Yang2010linear,Yoo2010orthogonal}, orthogonal pivoting algorithms \cite{Zhang2019greedy}, primal-dual frameworks \cite{Li2026unsupervised,Pompili2014two}, penalty approaches \cite{Jiang2016lp,Li2014two,Wang2021clustering}, and convex relaxation methods \cite{Pan2018orthogonal}.
A detailed exposition of these works falls beyond the scope of this paper.
The second class, by contrast, addresses the general formulation of problem \eqref{opt:stplus}, which we will elaborate on in this subsection.

Throughout this paper, we adopt the notations $[X]_{i, j}$, $[X]_{i, :}$, and $[X]_{:, j}$ to represent the $(i, j)$-th entry, the $i$-th row, and the $j$-th column of a matrix $X$, respectively.
Let $\Qpnp := \Qnp \cap \Rpnp$ with $\Qnp := \{X \in \Rnp \mid [X]_{:, j}\zz [X]_{:, j} = 1 \mbox{ for all } j\}$ being the oblique manifold \cite{Absil2008optimization,Boumal2023introduction} in $\Rnp$.
Jiang et al. \cite{Jiang2023exact} propose an exact penalty approach \EPOrth based on the equivalent description 
\begin{equation}
	\label{eq:oblique}
	\Opnp = \hkh{ X \in \Qpnp \mid \norm{X V}\ff = 1 },
\end{equation}
where $V \in \bR^{p \times r}$, with $r$ being an arbitrary positive integer, is a constant matrix satisfying $\norm{V}\ff = 1$ and $[V V\zz]_{i, j} > 0$ for any $i$ and $j$.
By penalizing the constraint $\norm{XV}\ff = 1$, \EPOrth attempts to solve a series of penalty subproblems of the following form,
\begin{equation}
	\label{opt:ep4orth}
	\min_{X \in \Qpnp} \hspace{2mm} f (X) + \rho \norm{X V}\ffs,
\end{equation}
where $\rho > 0$ is a penalty parameter.
The convergence behavior of \EPOrth critically hinges on the quality of approximate solutions to subproblem \eqref{opt:ep4orth} obtained at each iteration.
It is claimed that \EPOrth converges to a weakly first-order stationary point of problem \eqref{opt:stplus} if subproblem \eqref{opt:ep4orth} is solved to first-order stationarity.
This convergence result can be strengthened to a first-order stationary point of problem \eqref{opt:stplus} provided that each iterate satisfies a weakly second-order stationarity condition of subproblem \eqref{opt:ep4orth}.

Two penalty-based methods, \SEPPGz and \SEPPGp, are introduced by Qian et al. based on the local error bound derived in \cite{Qian2024error}.
Let $\rho > 0$ remain a penalty parameter.
Each iteration of \SEPPGz and \SEPPGp solves the following penalty subproblems,
\begin{equation}
	\label{opt:seppg0}
	\min_{X \in \Onp} \hspace{2mm} f (X) + \rho \norm{ \max \hkh{0, - X} }\ffs,
\end{equation}
and
\begin{equation}
	\label{opt:seppg+}
	\min_{X \in \Onp, Y \in \Rnp} \hspace{2mm} f (X) + \rho \norm{ \max \{0, - Y\} }_1 + \dfrac{\rho}{2 \gamma} \norm{ Y - X }\ffs,
\end{equation}
respectively.
Here, $\gamma > 0$ is a constant.
It is proved in \cite{Qian2024error} that problems \eqref{opt:seppg0} and \eqref{opt:seppg+} serve as global exact penalty models of problem \eqref{opt:stplus} when $f$ fulfills a lower second-order calmness condition and every global minimizer contains no zero rows.
The convergence of \SEPPGz and \SEPPGp to a first-order stationary point of problem \eqref{opt:stplus} is demonstrated, if each penalty subproblem is solved to first-order stationarity.

Apart from the aforementioned algorithms, there are studies that further lend support to the construction of penalty models for problem \eqref{opt:stplus}.
In a recent work, Chen et al. \cite{Chen2025tight} establish a global and tight error bound for a class of sign-constrained Stiefel manifolds, which includes $\Opnp$ as a special case.
Their error bound features an exponent of $1/2$ and cannot be improved.
This result explains why square-root terms are necessary in the error bound for problem \eqref{opt:stplus} with $1 < p < n$.
It also remains a challenging endeavor to solve the associated penalty model.
Moreover, drawing on an alternative characterization $\Opnp = \{X \in \Qpnp \mid \norm{[X]_{i, :}}_r = \norm{[X]_{i, :}}_s \mbox{ for all } i\}$ with $1 \leq r < s$, Wang et al. \cite{Wang2021clustering} develop a nonconvex penalty method tailored to the ONMF problem.
While their theoretical analysis rests on the special structure of the objective function, the underlying methodology of formulating penalty models naturally lends itself to broader generalizations.

Finally, it is worth remarking that some recent works consider constrained optimization problems on manifolds.
For instance, Riemannian variants of the augmented Lagrangian method are investigated in \cite{Liu2020simple,Wen2013feasible,Zhou2023semismooth}, whose convergence, however, depends on certain constraint qualifications that are not satisfied by problem \eqref{opt:stplus}.
In the setting where the feasible set can be expressed as the intersection of a smooth manifold and a convex set, the studies by Ding and Toh \cite{Ding2025exploration} and by Xiao et al. \cite{Xiao2025exact} explore interior-point and penalty-based approaches, respectively.
Nevertheless, these frameworks also prove inapplicable to problem \eqref{opt:stplus}, as its feasible set $\Opnp$ lacks any interior points and fails to meet the required nondegeneracy conditions of constraints.

\subsection{Contribution}

In this paper, we capitalize on the structural feature inherent in $\Opnp$ to devise a conceptually new \textit{support-set algorithm} for problem \eqref{opt:stplus}.
The proposed approach ensures that the sequence of iterates remains feasible.
The fundamental philosophy behind our algorithm lies in pursuing a support set that surpasses the current one, which bears a certain resemblance to the classical active-set methods \cite{Nocedal2006numerical} in spirit.
For the present setting, the active set of a matrix comprises the positions of zero entries.
The support set of a matrix can be viewed as the complement of its active set.
However, conventional techniques for updating the active set cannot preserve the special structure of nonnegative and orthogonal matrices.
In addition, the cardinality of a support set (at most $n$) is much smaller than that of an active set (at least $np - n$).

At its core, the support-set algorithm proceeds by minimizing a proximal linearization of the objective function within a fixed support set, and invokes a novel update scheme for support sets whenever sufficient reduction in the objective function value is not achieved.
In particular, the nonzero entries that are close to zero are reallocated to alternative columns that yield a further reduction in the objective function value.
Meanwhile, for rows of all zeros, we design a refined strategy to judiciously activate a position for nonzero entries.
These mechanisms effectively exploit the property that each row accommodates at most one nonzero entry, thus safeguarding feasibility at every iteration.
The update scheme for support sets not only adjusts the placement of nonzero entries but also drives a more pronounced reduction in the objective function value.
In sharp contrast to existing methods, each iteration of the support-set algorithm engages with only $n$ nonzero entries.
Remarkably, all subproblems admit explicit closed-form solutions, whose evaluation, owing to the underlying sparsity structure of matrices, demands only negligible computational effort.

To the best of our knowledge, the support-set algorithm is the first feasible approach for problem~\eqref{opt:stplus} that comes with provable theoretical guarantees.
We rigorously establish its subsequence convergence to a first-order stationary point. 
Our algorithm possesses both a sufficient descent property and the ability to control the stationarity violation by the distance between consecutive iterates. 
These two ingredients enable us to demonstrate the convergence of the whole sequence generated by our algorithm when the objective function $f$ is semi-algebraic. 
This analysis relies essentially on the semi-algebraic nature of the feasible set $\Opnp$.
Furthermore, the iteration complexity required to reach an $\epsilon$-approximate first-order stationary point is $O (\epsilon^{-2})$. 
As far as is known, the results concerning iteration complexity for problem \eqref{opt:stplus} have not yet appeared in the existing literature.
Finally, extensive numerical experiments substantiate the superior performance of our algorithm across a variety of benchmark tasks.
By fully leveraging the sparsity structure, the support-set algorithm achieves striking computational gains, realizing up to an order-of-magnitude speedup over state-of-the-art approaches in practical applications.

\subsection{Organization}

The rest of this paper proceeds as follows.
Section \ref{sec:preliminary} draws into basic notations and stationarity conditions.
We show in Section \ref{sec:support} that minimizing the proximal linearization of the objective function on a fixed support set yields a closed-form solution.
Section \ref{sec:algorithm} is dedicated to devising a novel support-set algorithm to solve problem \eqref{opt:stplus}.
And we establish the subsequence convergence and iteration complexity of the proposed algorithm in Section \ref{sec:convergence}.
Numerical results are presented in Section \ref{sec:numerical} to corroborate the superior computational efficiency of our algorithm relative to existing methods.
Finally, we give concluding remarks in Section \ref{sec:conclusion}.

\section{Preliminaries}

\label{sec:preliminary}

In this section, we introduce the notations used throughout this paper, delineate the stationarity conditions of problem \eqref{opt:stplus}, and present the concepts of Kurdyka-{\L}ojasiewicz property.

\subsection{Basic Notations}

We use $\bR$ and $\bN$ to denote the sets of real and natural numbers, respectively.
The Euclidean inner product of two matrices $A_1$ and $A_2$ with the same size is defined as $\jkh{A_1, A_2} = \tr(A_1\zz A_2)$, where $\tr (B)$ stands for the trace of a square matrix $B$.
We denote by $\Diag (B)$ the diagonal matrix whose diagonal entries coincide with those of a square matrix $B$.
The Frobenius norm and the $\ell_q$ norm with $q \geq 1$ of a matrix $C$ are represented by $\norm{C}\ff$ and $\norm{C}_q$, respectively. 
We define the distance between a point $X$ and a set $\cS$ by $\dist (X, \cS) := \inf \{\norm{Y - X}\ff \mid Y \in \cS\}$. 
The notation $\supp (C) := \{(i, j) \mid [C]_{i, j} \neq 0\}$ refers to the support set of a matrix $C$ and $\zrow (C) := \{i \mid \norm{[C]_{i, :}}_2 = 0\}$ represents the collection of indices corresponding to the zero rows of a matrix $C$.
The sign matrix $\sign (C)$ is defined entrywise by $[\sign (C)]_{i, j} = 1$ if $[C]_{i, j} > 0$, $[\sign (C)]_{i, j} = -1$ if $[C]_{i, j} < 0$, and $[\sign (C)]_{i, j} = 0$ if $[C]_{i, j} = 0$.
We denote by $\odot$ the Hadamard product.
Further notations will be introduced wherever they occur.

\subsection{Stationarity Condition}

The feasible set $\Opnp$ of problem~\eqref{opt:stplus} exhibits an inherent interplay between discrete and continuous structures when $1 < p < n$. 
In line with most existing studies, we investigate problem~\eqref{opt:stplus} from the perspective of continuous optimization. 
A point $X\last \in \Opnp$ is called a local minimizer of problem~\eqref{opt:stplus} if there exists a constant $\nu > 0$ such that $f (X\last) \leq f (X)$ for all $X \in \Opnp$ with $\norm{X - X\last}\ff \leq \nu$. 
This subsection is devoted to deriving necessary conditions for a point to be a local minimizer.

As discussed in \cite[Section 2.3]{Jiang2023exact}, the Guignard constraint qualification \cite{Andreani2016cone} always holds for problem~\eqref{opt:stplus}. 
Every local minimizer $X\last \in \Opnp$ of problem~\eqref{opt:stplus} necessarily satisfies the following condition,
\begin{equation}
	\label{eq:kkt}
	0 \in \nabla f (X\last) + \cN_{\Opnp} (X\last),
\end{equation}
where $\cN_{\Opnp} (X\last)$ is the normal cone to $\Opnp$ at $X\last$. 
The explicit expression of $\cN_{\Opnp} (X)$ is derived in \cite[Lemma~2.2]{Jiang2023exact} as follows,
\begin{equation}
	\label{eq:normal}
	\cN_{\Opnp} (X)
	= \hkh{
		D \in \Rnp \;\middle|\;
		\begin{aligned}
			& [D]_{i, j} = \lambda_j [X]_{i, j} \mbox{ with } \lambda_j \in \bR, \mbox{ for all } (i, j) \in \supp (X), \\
			& [D]_{i, j} \leq 0, \mbox{ for all } i \in \zrow(X) \mbox{ and } j \in \{1, 2, \dotsc, p\}
		\end{aligned}
	}.
\end{equation}
Below we provide an equivalent characterization for \eqref{eq:kkt}.

\begin{proposition}
	\label{prop:kkt}
	A point $X\last \in \Opnp$ satisfies the condition \eqref{eq:kkt} if and only if the following relationships hold,
	\begin{subnumcases}{\label{eq:sc}}
		\label{eq:sc-supp}
		{[\grad\, f (X\last)]_{i\last, j\last} = 0}, 
		& for all $(i\last, j\last) \in \supp (X\last)$, \\
		\label{eq:sc-zrow}
		{[\nabla f (X\last)]_{i\last, j\last} \geq 0}, 
		& for all $i\last \in \zrow (X\last)$ and $j\last \in \{1, 2, \dotsc, p\}$,
	\end{subnumcases} 
	where $\grad\, f (X\last) = \nabla f (X\last) - X\last \Diag (X\last\zz \nabla f (X\last))$ represents the Riemannian gradient of $f$ at $X\last$. 
\end{proposition}

\begin{proof}
	We begin by assuming that $X\last$ satisfies the condition \eqref{eq:kkt}, which indicates that $- \nabla f (X\last) \in \cN_{\Opnp} (X\last)$.
	Then, for any $(i\last, j\last) \in \supp (X\last)$, there exists $\lambda_{j\last} \in \bR$ such that 
	\begin{equation*}
		[- \nabla f (X\last)]_{i\last, j\last} = \lambda_{j\last} [X\last]_{i\last, j\last}.
	\end{equation*}
	Straightforward calculations yield that
	\begin{equation*}
		\begin{aligned}
			{[X\last\zz \nabla f (X\last)]_{j\last, j\last}}
			= {} & \sum_{i = 1}^{n} [X\last]_{i, j\last} [\nabla f (X\last)]_{i, j\last}
			= \sum_{i: (i, j\last) \in \supp ([X\last])} [X\last]_{i, j\last} [\nabla f (X\last)]_{i, j\last} \\
			= {} & - \lambda_{j\last} \sum_{i: (i, j\last) \in \supp ([X\last])} [X\last]_{i, j\last}^2 
			= - \lambda_{j\last}.
		\end{aligned}
	\end{equation*}
	We can obtain that
	\begin{equation*}
		\begin{aligned}
			{[\grad\, f (X\last)]_{i\last, j\last}}
			= {} & [\nabla f (X\last)]_{i\last, j\last} - [X\last]_{i\last, j\last} [X\last\zz \nabla f (X\last)]_{j\last, j\last} \\
			= {} & [\nabla f (X\last)]_{i\last, j\last} + \lambda_{j\last} [X\last]_{i\last, j\last}
			= 0,
		\end{aligned}
	\end{equation*}
	for all $(i\last, j\last) \in \supp (X\last)$. 
	Hence, the condition \eqref{eq:sc-supp} holds. 
	Now we choose $i\last \in \zrow (X\last)$. 
	Then the inclusion $- \nabla f (X\last) \in \cN_{\Opnp} (X\last)$ guarantees that $[- \nabla f (X\last)]_{i\last, :} \leq 0$, which is equivalent to the condition \eqref{eq:sc-zrow}. 
	
	Next, we consider that $X\last$ satisfies two relationships in \eqref{eq:sc}. 
	Let $\lambda_j = - [X\last\zz \nabla f (X\last)]_{j, j}$ for all $j \in \{1, 2, \dotsc, p\}$. 
	For $(i\last, j\last) \in \supp (X\last)$, it follows from the condition \eqref{eq:sc-supp} that
	\begin{equation*}
		\begin{aligned}
			0 = {} & {[\grad\, f (X\last)]_{i\last, j\last}}
			= [\nabla f (X\last)]_{i\last, j\last} - [X\last]_{i\last, j\last} [X\last\zz \nabla f (X\last)]_{j\last, j\last} \\
			= {} & [\nabla f (X\last)]_{i\last, j\last} + \lambda_{j\last} [X\last]_{i\last, j\last},
		\end{aligned}
	\end{equation*}
	which implies that $[- \nabla f (X\last)]_{i\last, j\last} = \lambda_{j\last} [X\last]_{i\last, j\last}$. 
	Moreover, the condition \eqref{eq:sc-zrow} indicates that $[- \nabla f (X\last)]_{i\last, :} \leq 0$ for $i\last \in \zrow (X\last)$. 
	We can conclude that $- \nabla f (X\last) \in \cN_{\Opnp} (X\last)$ and $X\last \in \Opnp$ satisfies the condition \eqref{eq:kkt}. 
	The proof is completed. 
\end{proof}

It is noteworthy that Jiang et al. \cite{Jiang2023exact} also establish the stationarity conditions in \eqref{eq:sc} by exploiting the KKT system related to the alternative description of $\Opnp$ in \eqref{eq:oblique}. 
By contrast, the proof of Proposition~\ref{prop:kkt} proceeds directly from the normal cone of $\Opnp$. 
The notions of stationary points given below follows those adopted in \cite{Jiang2023exact}.

\begin{definition}
	A point $X\last \in \Opnp$ is called a weakly first-order stationary point of problem~\eqref{opt:stplus} if it adheres to the condition \eqref{eq:sc-supp}. 
	Moreover, we say a point $X\last \in \Opnp$ is a first-order stationary point of problem~\eqref{opt:stplus} if it satisfies two conditions \eqref{eq:sc-supp} and \eqref{eq:sc-zrow}. 
\end{definition}

The two conditions \eqref{eq:sc-supp} and \eqref{eq:sc-zrow} play distinct yet complementary roles. 
The former rules out descent directions confined to the current support pattern, whereas the latter excludes descent directions induced by activating entries in zero rows. 
Their combination therefore guarantees stationarity with respect to the entire feasible set.
Building upon the preceding definition, we finally introduce the concept of an approximate first-order stationary point.

\begin{definition}
	A point $X\last \in \Opnp$ is called an $\epsilon$-approximate first-order stationary point of problem \eqref{opt:stplus} if the following conditions hold,
	\newcounter{saveeq}
	\setcounter{saveeq}{\value{equation}}
	\begin{subnumcases}{}
		{\abs{[\grad\, f (X\last)]_{i\last, j\last}} \leq \epsilon},
		& for all $(i\last, j\last) \in \supp (X\last)$, \nonumber \\
		{[\nabla f (X\last)]_{i\last, j\last} \geq - \epsilon},
		& for all $i\last \in \zrow (X\last)$ and $j\last \in \{1, 2, \dotsc, p\}$. \nonumber
	\end{subnumcases}
	\setcounter{equation}{\value{saveeq}}
\end{definition}

The above definition arises from a perturbation of the stationarity conditions in \eqref{eq:sc}. 
Obviously, when $\epsilon = 0$, an $\epsilon$-approximate first-order stationary point precisely satisfies these conditions, thereby qualifying as a first-order stationary point of problem~\eqref{opt:stplus}.

\subsection{Kurdyka-{\L}ojasiewicz Property}

A part of the convergence results developed in this paper falls in the scope of a general class of functions that satisfy the Kurdyka-{\L}ojasiewicz (K{\L}) property \cite{Bolte2007lojasiewicz,Attouch2010proximal}. 
Below, we introduce the basic elements to be used in the subsequent analysis.

For any $\mu > 0$, we denote by $\Phi_\mu$ the class of all concave and continuous functions $\chi: [0, \mu) \to \bR_+$ which satisfy the following conditions.
\begin{enumerate}[(i)]
	
	\item $\chi (0) = 0$;
	
	\item $\chi$ is continuously differentiable on $(0, \mu)$ and continuous at $0$;
	
	\item $\chi^{\prime} (t) > 0$ for any $t \in (0, \mu)$.
	
\end{enumerate}
Now we define the K{\L} property.

\begin{definition}[{\cite[Definition~3.1]{Attouch2010proximal}}]
	Let $\varphi: \Rnp \to (-\infty, +\infty]$ be a proper and lower semi-continuous function and $\partial \varphi$ be the limiting subdifferential of $\varphi$. 
	\begin{enumerate}[(i)]
		
		\item The function $\varphi$ is said to satisfy the K{\L} property at $X \in \dom (\partial \varphi) := \{Y \in \Rnp \mid \partial \varphi (Y) \neq \emptyset\}$ if there exists a constant $\mu \in (0, +\infty]$, a neighborhood $\cU$ of $X$, and a function $\chi \in \Phi_{\mu}$, such that for any $Y \in \cU$ satisfying $\varphi (X) < \varphi (Y) < \varphi (X) + \mu$, the following K{\L} inequality holds,
		\begin{equation*}
			\chi^{\prime} (\varphi (Y) - \varphi (X)) \, \dist (0, \partial \varphi (Y)) \geq 1.
		\end{equation*}
		The function $\chi$ is called a desingularizing function of $\varphi$ at $X$. 
		
		\item We say $\varphi$ is a K{\L} function if $\varphi$ satisfies the K{\L} property at each point of $\dom (\partial \varphi)$. 
		
	\end{enumerate}
\end{definition}

K{\L} functions cover a wealth of nonconvex nonsmooth functions and are ubiquitous in many practical applications. 
We present a class of semi-algebraic functions that enjoy the K{\L} property.

\begin{definition}[{\cite[Definition~5]{Bolte2014proximal}}]
	A set $\cS \subseteq \Rnp$ is called semi-algebraic if there exists a finite number of real polynomial functions $g_{i, j}: \Rnp \to \bR$ and $h_{i, j}: \Rnp \to \bR$ such that
	\begin{equation*}
		\cS = \bigcup_{j = 1}^{v} \bigcap_{i = 1}^{u} \hkh{X \in \Rnp \mid g_{i, j} (X) = 0 \mbox{ and } h_{i, j} (X) < 0}.
	\end{equation*}
	Moreover, we say a function $\varphi: \Rnp \to (- \infty, + \infty]$ is semi-algebraic if its graph
	\begin{equation*}
		\hkh{(X, t) \in \Rnp \times \bR \mid \varphi (X) = t}
	\end{equation*}
	is semi-algebraic in $\Rnp \times \bR$. 
\end{definition}

Real polynomial functions and indicator functions of semi-algebraic sets are semi-algebraic. 
In addition, finite sums, products, and compositions of semi-algebraic functions are also semi-algebraic. 
We refer interested readers to the references \cite{Bolte2007lojasiewicz,Attouch2009convergence,Attouch2010proximal,Bolte2014proximal} for more details.

\section{Problem \eqref{opt:stplus} With a Fixed Support Set}

\label{sec:support}

As mentioned earlier, any matrix $X \in \Opnp$ has at most one nonzero entry in each row.
Therefore, once the support set of matrices is predetermined, the orthogonality among columns is preserved.
This structure allows us to shift our focus entirely to enforcing the nonnegativity and unit-norm constraints on the individual column vectors.
Leveraging this structure of $\Opnp$, we investigate in this section how to effectively reduce the objective function value of problem \eqref{opt:stplus} with a fixed support set.
This goal is achieved by solving a subproblem that minimizes a proximal linearization of the objective function.
Owing to the structural simplicity induced by the support set, the resulting subproblem admits a closed-form solution.

We adopt the notation $\bar{f}_{Z}: \Rnp \to \bR$ to represent the proximal linearization of the objective function $f$ around a point $Z \in \Opnp$ as follows,
\begin{equation*}
	\bar{f}_{Z} (X) := f (Z) + \jkh{\nabla f (Z), X - Z} + \dfrac{\eta}{2} \norm{X - Z}\ffs,
\end{equation*}
where $\eta > L$ is a proximal parameter.
The lemma below demonstrates that a sufficient reduction in the function value can be realized through minimizing the proximal linearization.

\begin{lemma}
	\label{le:lip-f}
	Suppose that $Z \in \Opnp$ and $X \in \Opnp$ satisfy $\bar{f}_{Z} (X) \leq \bar{f}_{Z} (Z)$.
	Then we have
	\begin{equation*}
		f (Z) - f (X)
		\geq \dfrac{\eta - L}{2} \norm{X - Z}\ffs.
	\end{equation*}
\end{lemma}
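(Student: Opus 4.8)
The plan is to combine two facts: a descent inequality supplied by the $L$-Lipschitz continuity of $\nabla f$, and the minimizing hypothesis $\bar{f}_{Z}(X) \le \bar{f}_{Z}(Z)$, which controls the first-order term. The key is that the quadratic penalty in $\bar f_{Z}$ carries coefficient $\eta$, whereas the descent lemma only charges $L/2$, so the gap $(\eta-L)/2$ survives as guaranteed decrease. Concretely, I would first exploit the hypothesis. Evaluating the proximal linearization at $Z$ gives $\bar f_{Z}(Z) = f(Z)$, so $\bar f_{Z}(X) \le \bar f_{Z}(Z)$ becomes
\[
\jkh{\nabla f(Z), X - Z} + \frac{\eta}{2}\norm{X - Z}\fs \le 0,
\]
equivalently $\jkh{\nabla f(Z), X - Z} \le -\tfrac{\eta}{2}\norm{X - Z}\fs$.

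Next I would invoke the descent inequality associated with Assumption~\ref{asp:objective}: for $X, Z \in \Opnp$,
\[
f(X) \le f(Z) + \jkh{\nabla f(Z), X - Z} + \frac{L}{2}\norm{X - Z}\fs.
\]
Substituting the bound on the inner product obtained above,
\[
f(X) \le f(Z) - \frac{\eta}{2}\norm{X - Z}\fs + \frac{L}{2}\norm{X - Z}\fs = f(Z) - \frac{\eta - L}{2}\norm{X - Z}\fs,
\]
which rearranges to the claimed inequality; since $\eta > L$, the right-hand side is genuinely nonnegative, confirming that minimizing $\bar f_{Z}$ yields authentic descent in $f$.

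The step needing care, and the one I expect to be the main obstacle, is the descent inequality itself. Its textbook justification writes $f(X) - f(Z) - \jkh{\nabla f(Z), X - Z} = \int_{0}^{1} \jkh{\nabla f(Z + t(X - Z)) - \nabla f(Z),\, X - Z}\,\rmd t$ and bounds the integrand by Lipschitzness, but the segment $Z + t(X - Z)$ generally leaves the manifold $\Onp$, whereas Assumption~\ref{asp:objective} posits Lipschitz continuity of $\nabla f$ only \emph{over} $\Onp$. I would therefore either read the assumption as furnishing $L$-Lipschitzness of $\nabla f$ on a convex neighborhood of the compact set $\Onp$ (so that the segment-integration argument applies verbatim), or absorb the off-manifold deviation into the constant via compactness and boundedness of $\Onp$. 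Once the descent inequality is secured on this basis, the remaining algebra above is immediate and the conclusion follows.
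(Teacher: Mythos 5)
Your proposal is correct and follows essentially the same route as the paper: derive $\jkh{\nabla f(Z), X - Z} \leq -\tfrac{\eta}{2}\norm{X-Z}\fs$ from the hypothesis, combine it with the descent inequality $f(X) \leq f(Z) + \jkh{\nabla f(Z), X-Z} + \tfrac{L}{2}\norm{X-Z}\fs$, and rearrange. Your additional caveat about the descent inequality requiring Lipschitzness of $\nabla f$ along the (generally off-manifold) segment joining $X$ and $Z$ is a fair observation that the paper passes over silently, and your proposed fix via a convex neighborhood of the compact set $\Onp$ is the standard resolution.
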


\begin{proof}
	According to the Lipschitz continuity of $\nabla f$, it follows that
	\begin{equation}
		\label{eq:lip-f}
		f (X)
		\leq f (Z)
		+ \jkh{\nabla f (Z), X - Z}
		+ \dfrac{L}{2} \norm{X - Z}\ffs.
	\end{equation}
	As a direct consequence of the relationship $\bar{f}_{Z} (X) \leq \bar{f}_{Z} (Z)$, we can proceed to show that
	\begin{equation}
		\label{eq:des-barf}
		\jkh{\nabla f (Z), X - Z}
		\leq - \dfrac{\eta}{2} \norm{X - Z}\ffs.
	\end{equation}
	Collecting two inequalities \eqref{eq:lip-f} and \eqref{eq:des-barf} together yields the assertion of this lemma.
	We complete the proof.
\end{proof}

It is important to emphasize that, minimizing the proximal linearization of $f$ across the entire feasible set $\Opnp$ is an intractable task.
Nevertheless, we observe that an explicit solution readily emerges by confining the corresponding subproblem to a predetermined support set.

Let $S \in \sign (\Opnp) := \{\sign (X) \mid X \in \Opnp\}$ be a sign matrix of an element in $\Opnp$.
This implies that each row of $S$ contains at most one entry equal to $1$ with all other entries being $0$, and that each column contains at least one entry equal to $1$.
The feasible set $\Opnp$ of problem \eqref{opt:stplus} is characterized by three types of constraints, namely,
\begin{subnumcases}{}
	\label{con:oblique}
	X \geq 0, \quad [X]_{:, j}\zz [X]_{:, j} = 1 \mbox{ for all } j, \\
	\label{con:orth}
	[X]_{:, j}\zz [X]_{:, l} = 0 \mbox{ for all } j \neq l.
\end{subnumcases}
By further imposing a support constraint $\supp (X) \subseteq \supp (S)$, the orthogonality across columns prescribed in \eqref{con:orth} is automatically guaranteed.
As a result, only the two constraints in \eqref{con:oblique} remain to be addressed, whose combination is far more tractable.
And the original problem is essentially reduced to an optimization model on the oblique manifold.
This insight, in turn, naturally leads us to the following problem,
\begin{equation}
	\label{opt:subp-supp}
	\begin{aligned}
		\min_{X \in \Opnp} \hspace{2mm} & \bar{f}_{Z} (X) \\
		\st \hspace{3mm} & \supp (X) \subseteq \supp (S).
	\end{aligned}
\end{equation}
The above formulation simplifies the original problem~\eqref{opt:stplus} in two respects, including the structure of the objective function and the restriction on the support set.

The proposition below unveils that the global minimizer of problem~\eqref{opt:subp-supp} can be computed in closed form, which serves as a cornerstone in the design of our algorithm.

\begin{proposition}
	\label{prop:supp}
	Let $W = \max \{0, ( \eta Z - \nabla f (Z) ) \odot S\} \in \Rpnp$.
	For all $j \in \{1, 2, \dotsc, p\}$, we denote
	\begin{equation*}
		\alpha_j =
		\left\{
		\begin{aligned}
			& - \norm{[W]_{:, j}}_2,
			&& \mbox{if } [W]_{:, j} \neq 0, \\
			& [\nabla f (Z) - \eta Z]_{\bar{i}^{(j)}, j},
			&& \mbox{otherwise},
		\end{aligned}
		\right.
	\end{equation*}
	where $\bar{i}^{(j)} = \min \{i\uast \mid i\uast \in \argmin_{i \in \supp ([S]_{:, j})} [\nabla f (Z) - \eta Z]_{i, j} \}$.
	Then, for any $Z \in \Opnp$ and $S \in \sign (\Opnp)$, the global minimum of problem \eqref{opt:subp-supp} is
	\begin{equation*}
		\bar{f}_Z\uast = f (Z) - \jkh{\nabla f (Z), Z} + \eta p + \sum_{j = 1}^{p} \alpha_j.
	\end{equation*}
	Moreover, it is attained at $\bar{X} \in \Opnp$ whose $j$-th column, for all $j \in \{1, 2, \dotsc, p\}$, takes the form of
	\begin{equation}
		\label{eq:sol-supp}
		[\bar{X}]_{:, j} =
		\left\{
		\begin{aligned}
			& \dfrac{[W]_{:, j}}{\norm{[W]_{:, j}}_2},
			&& \mbox{if } [W]_{:, j} \neq 0, \\
			& \hspace{3mm} [I_n]_{:, \bar{i}^{(j)}},
			&& \mbox{otherwise},
		\end{aligned}
		\right.
	\end{equation}
	where $[I_n]_{:, j}$ is the $j$-th unit vector in $\Rn$.
\end{proposition}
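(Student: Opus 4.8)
The plan is to reduce the constrained minimization \eqref{opt:subp-supp} to $p$ independent one-dimensional problems, one per column, each solvable in closed form. First I would simplify the objective: for every feasible $X \in \Opnp$ one has $\norm{X}\fs = \tr(X\zz X) = p$, and likewise $\norm{Z}\fs = p$, so $\norm{X - Z}\fs = 2p - 2\jkh{X, Z}$ and
\begin{equation*}
	\bar{f}_Z(X) = f(Z) - \jkh{\nabla f(Z), Z} + \eta p + \jkh{\nabla f(Z) - \eta Z, X}.
\end{equation*}
Thus, up to the displayed $X$-independent constants, minimizing $\bar{f}_Z$ over the feasible set is equivalent to maximizing $\jkh{G, X}$ with $G := \eta Z - \nabla f(Z)$.

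Next I would exploit the support constraint to decouple the problem across columns. Since $S \in \sign(\Opnp)$, each row of $S$ carries at most one nonzero entry, so the column supports $\cI_j := \supp([S]_{:, j})$ are pairwise disjoint (and nonempty, as every column of $S$ has a $1$). Consequently any $X$ with $\supp(X) \subseteq \supp(S)$ has orthogonal columns automatically, so constraint \eqref{con:orth} is vacuous; the surviving constraints---nonnegativity, the support restriction, and $\norm{[X]_{:, j}}_2 = 1$---act on each column separately, and $\jkh{G, X} = \sum_j \jkh{[G]_{:, j}, [X]_{:, j}}$ splits accordingly. Writing $g = [G]_{\cI_j, j}$ and $x = [X]_{\cI_j, j}$, the $j$-th subproblem becomes $\max\{ g\zz x : x \geq 0,\ \norm{x}_2 = 1 \}$.

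The crux is to solve this per-column problem in two regimes, and I expect the second to be the main obstacle. When $g$ has a positive entry (equivalently $[W]_{:, j} \neq 0$), zeroing out the coordinates with $g_i \leq 0$ can only help, and Cauchy--Schwarz on the remainder gives optimal value $\norm{\max\{0, g\}}_2 = \norm{[W]_{:, j}}_2$, attained at $x = \max\{0, g\} / \norm{\max\{0, g\}}_2$; this reproduces $\alpha_j = - \norm{[W]_{:, j}}_2$ and the first branch of \eqref{eq:sol-supp}. The delicate case is $[W]_{:, j} = 0$, i.e. every entry of $g$ is nonpositive, where the optimizer is a vertex rather than an interior direction. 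Here I would bound $g\zz x \leq (\max_i g_i) \sum_i x_i = (\max_i g_i) \norm{x}_1 \leq \max_i g_i$, using $g_i \leq \max_i g_i$ with $x_i \geq 0$ in the first step and $\norm{x}_1 \geq \norm{x}_2 = 1$ together with $\max_i g_i \leq 0$ in the second; equality holds at the unit vector $[I_n]_{:, \bar{i}^{(j)}}$, with $\max_i g_i = [G]_{\bar{i}^{(j)}, j}$, giving $\alpha_j = [\nabla f(Z) - \eta Z]_{\bar{i}^{(j)}, j}$ and the second branch of \eqref{eq:sol-supp}, while the $\min$ defining $\bar{i}^{(j)}$ merely selects the smallest index attaining this maximum.

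Finally I would assemble the columns. Each $[\bar{X}]_{:, j}$ in \eqref{eq:sol-supp} is nonnegative, of unit $\ell_2$ norm, and supported within $\cI_j$, so $\supp(\bar{X}) \subseteq \supp(S)$, the columns are orthogonal, and hence $\bar{X} \in \Opnp$ is feasible. Summing the per-column optima yields $\min \jkh{\nabla f(Z) - \eta Z, X} = \sum_j \alpha_j$, and substituting this into the simplified objective delivers the claimed global minimum $\bar{f}_Z\uast = f(Z) - \jkh{\nabla f(Z), Z} + \eta p + \sum_{j=1}^p \alpha_j$.
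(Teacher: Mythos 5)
Your proposal is correct and follows essentially the same route as the paper's proof: rewrite $\bar{f}_Z$ as a linear functional of $X$ plus constants using $\norm{X}\fs = \norm{Z}\fs = p$, decouple the problem over columns via the pairwise disjoint column supports of $S$, and split into two cases according to whether $[W]_{:, j}$ vanishes. The only (immaterial) difference is the elementary inequality used in the degenerate case --- you invoke $\norm{x}_1 \geq \norm{x}_2 = 1$ together with $\max_i g_i \leq 0$, whereas the paper uses $[x]_i \geq [x]_i^2$ --- both yielding the same optimal value and the same minimizer.
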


\begin{remark}
	When there exists a column index $j \in \{1, 2, \dotsc, p\}$ such that $\argmin_{i \in \supp ([S]_{:, j})} [\nabla f (Z) - \eta Z]_{i, j}$ is not a singleton and $[W]_{:, j} = 0$, the global minimizer of problem~\eqref{opt:subp-supp} is not unique.
	For this case, the choice of a particular global minimizer does not affect either the algorithmic design or the theoretical analysis.
	In practice, we select the minimal index $\bar{i}^{(j)}$ in $\argmin_{i \in \supp ([S]_{:, j})} [\nabla f (Z) - \eta Z]_{i, j}$ for $[\bar{X}]_{:, j}$.
\end{remark}

\begin{proof}
	On account of the orthogonality of both $X$ and $Z$, we have
	\begin{equation*}
		\begin{aligned}
			\bar{f}_{Z} (X)
			= {} & f (Z) + \jkh{\nabla f (Z), X - Z} + \dfrac{\eta}{2} \norm{X - Z}\ffs \\
			= {} & \jkh{X, \nabla f (Z) - \eta Z} + f (Z) - \jkh{\nabla f (Z), Z} + \eta p.
		\end{aligned}
	\end{equation*}
	Upon omitting constant terms, problem \eqref{opt:subp-supp} further simplifies to the following optimization model,
	\begin{equation*}
		\begin{aligned}
			\min_{X \in \Opnp} \hspace{2mm} & \jkh{X, \nabla f (Z) - \eta Z} \\
			\st \hspace{3mm} & \supp (X) \subseteq \supp (S).
		\end{aligned}
	\end{equation*}
	A straightforward verification reveals that the above problem is separable with respect to column vectors.
	In fact, the optimization problem for the $j$-th column can be formulated as
	\begin{equation}
		\label{opt:subp-supp-j}
		\begin{aligned}
			\min_{x \in \Rn} \hspace{2mm} & \jkh{x, [\nabla f (Z) - \eta Z]_{:, j}} \\
			\st \hspace{1.5mm} & x \geq 0, \; \norm{x}_2 = 1, \; \supp (x) \subseteq \supp ([S]_{:, j}).
		\end{aligned}
	\end{equation}
	For convenience, we define $b_{j} = [W]_{:, j} = \max \{0, [(\eta Z - \nabla f (Z)) \odot S]_{:, j}\} \in \Rn_{+}$.
	It is clear that the solution given by \eqref{eq:sol-supp} satisfies all constraints of problem \eqref{opt:subp-supp-j}.
	Let $x \in \Rn$ be an arbitrary feasible point of problem \eqref{opt:subp-supp-j}.
	If $b_{j} \neq 0$, we have $[(\nabla f (Z) - \eta Z) \odot S]_{:, j} = a_{j} - b_{j}$, where $a_{j} = \max \{0, [(\nabla f (Z) - \eta Z) \odot S]_{:, j}\} \in \Rn_{+}$.
	Then it follows from the relationship $\supp (x) \subseteq \supp ([S]_{:, j})$ that
	\begin{equation*}
		\begin{aligned}
			\jkh{x, [\nabla f (Z) - \eta Z]_{:, j}}
			= {} & \jkh{x, [(\nabla f (Z) - \eta Z) \odot S]_{:, j}} \\
			= {} & \jkh{x, a_{j} - b_{j}}
			\geq - \jkh{x, b_{j}}
			\geq - \norm{b_{j}}_2,
		\end{aligned}
	\end{equation*}
	where the equality is achieved at $x = b_{j} / \norm{b_{j}}_2$.
	Otherwise, if $b_{j} = 0$, it holds that $[\nabla f (Z) - \eta Z]_{i, j} \geq 0$ for all $i \in \supp ([S]_{:, j})$.
	Hence, we can obtain that
	\begin{equation*}
		\begin{aligned}
			\jkh{x, [\nabla f (Z) - \eta Z]_{:, j}}
			= {} & \sum_{i \in \supp ([S]_{:, j})} [\nabla f (Z) - \eta Z]_{i, j} [x]_{i} \\
			\geq {} & [\nabla f (Z) - \eta Z]_{\bar{i}^{(j)}, j} \sum_{i \in \supp ([S]_{:, j})} [x]_{i}^2
			= [\nabla f (Z) - \eta Z]_{\bar{i}^{(j)}, j},
		\end{aligned}
	\end{equation*}
	where the equality is attained at $x = [I_n]_{:, \bar{i}^{(j)}}$.
	The proof is completed.
\end{proof}

The foregoing proposition establishes that the global minimizer of problem~\eqref{opt:subp-supp} possesses a closed-form expression, which can be computed with negligible computational effort.
In general, this amounts merely to normalizing the columns of the matrix $W = \max \{0, ( \eta Z - \nabla f (Z) ) \odot S\}$, a procedure with complexity $O (n)$ as $W$ has at most $n$ nonzero entries.
Beyond this, the computation of $\nabla f (Z)$ benefits significantly from the inherent sparsity structure.
On the one hand, it suffices to evaluate the gradient only at the positions specified by the sign matrix $S$, whose cardinality never exceeds $n$.
On the other hand, the matrix computations involved in $\nabla f (Z)$ can be carried out with considerably reduced complexity by exploiting the sparsity of $Z$.
Consequently, solving problem \eqref{opt:subp-supp} offers a promising pathway toward rapidly realizing a pronounced reduction in the objective function value.

\section{Algorithm Development}

\label{sec:algorithm}

The analysis in Section \ref{sec:support} demonstrates that the objective function value in problem \eqref{opt:stplus} can be substantially reduced with relatively low computational cost when the support set is fixed.
The fundamental difficulty remains the identification of a support set superior to the current one.
To attain a solution of higher quality, it becomes essential to explore updates to the support set that can drive further descent.
For this purpose, we propose a tailored support-set algorithm designed to navigate the combinatorial nature of problem \eqref{opt:stplus} while preserving feasibility.

\subsection{Refined Strategy for Zero Rows}

\label{subsec:zero-rows}

Let $X_k \in \Opnp$ be the current iterate of our algorithm at the $k$‑th iteration.
At this stage, our goal is to generate an intermediate iterate $Y_k \in \Opnp$ by minimizing $\bar{f}_{X_k}$ within a suitable support set specified by a sign matrix $S_k \in \sign (\Opnp)$.
The selection of support sets is thus of paramount importance, as it directly influences the quality and efficiency of the overall procedure.
A natural and principled choice is to adopt the support set of $X_k$ itself.
When it contains zero rows, this strategy may fail to produce a first-order stationary point of the original problem \eqref{opt:stplus}, as condition \eqref{eq:sc-zrow} is not necessarily satisfied in this setting.
To address this issue, we develop a procedure to refine the support set of $X_k$.

Our attention is restricted to the situation where $\zrow (X_k) \neq \varnothing$.
Otherwise, we directly set $S_k = \sign (X_k)$.
Let $j_k^{(i)}$ be the minimal column index associated with the smallest entry in the $i$-th row of $\nabla f (X_k)$, namely,
\begin{equation*}
	j_k^{(i)} = \min \hkh{ j\uast \;\middle|\; j\uast \in \argmin_{ j \in \{1, 2, \dotsc, p\} } [\nabla f (X_k)]_{i, j} }.
\end{equation*}
Each zero row $i \in \zrow (X_k)$ can be refined by activating a nonzero entry at the position $(i, j_k^{(i)})$, in a manner that the resulting point $Y_k$ potentially achieves a further reduction in the objective function value.
It will become evident that this particular choice of $j_k^{(i)}$ proves to be critical in the subsequent theoretical developments.
Given the current support set of $X_k$, we construct a corresponding sign matrix $S_k \in \sign (\Opnp)$ to facilitate this update as follows,
\begin{equation}
	\label{eq:sign-zrow}
	\left\{
	\begin{aligned}
		& [S_k]_{i, :} = \sign ([X_k]_{i, :}), \mbox{ for } i \notin \zrow (X_k), \\
		& [S_k]_{i, j_k^{(i)}} = 1, \; [S_k]_{i, j} = 0, \mbox{ for } i \in \zrow (X_k) \mbox{ and } j \neq j_k^{(i)}.
	\end{aligned}
	\right.
\end{equation}
It can be observed that, $S_k$ retains the original positions of nonzero entries in $X_k$, while simultaneously endowing the zero rows of $X_k$ with specific locations to accommodate newly created nonzero entries.
This adjustment of the support set leaves the orthogonality across columns intact.
Then the intermediate iterate $Y_k$ can be obtained by solving the optimization problem below,
\begin{equation}
	\label{opt:subp-zrow}
	\begin{aligned}
		Y_k \in \argmin_{X \in \Opnp} \hspace{2mm} & \bar{f}_{X_k} (X) \\
		\st \hspace{3.5mm} & \supp (X) \subseteq \supp (S_k).
	\end{aligned}
\end{equation}
Then Proposition \ref{prop:supp} guarantees that the $j$-th column of $Y_k$, for all $j \in \{1, 2, \dotsc, p\}$, can be computed explicitly in closed form as follows,
\begin{equation}
	\label{eq:sol-subp-zrow}
	{[Y_k]_{:, j}} =
	\left\{
	\begin{aligned}
		& \dfrac{[W_k]_{:, j}}{\norm{[W_k]_{:, j}}_2},
		&& \mbox{if } [W_k]_{:, j} \neq 0, \\
		& \hspace{3mm} [I_n]_{:, i_k^{(j)}},
		&& \mbox{otherwise},
	\end{aligned}
	\right.
\end{equation}
with
\begin{equation*}
	W_k = \max \hkh{ 0, ( \eta X_k - \nabla f (X_k) ) \odot S_k } \in \Rpnp,
\end{equation*}
and
\begin{equation*}
	i_k^{(j)} = \min \hkh{ i\uast \;\middle|\; i\uast \in \argmin_{ i \in \supp ([S_k]_{:, j}) } [\nabla f (X_k) - \eta X_k]_{i, j} }. 
\end{equation*}

We now take a closer look at the newly updated entries of $Y_k$ in the original zero rows.
Consider a zero row $i \in \zrow (X_k)$ that satisfies $[\nabla f (X_k)]_{i, j_k^{(i)}} < 0$.
In this case, since $[W_k]_{:, j_k^{(i)}} \neq 0$, the updated value of $Y_k$ at $(i, j_k^{(i)})$ is
\begin{equation*}
	{[Y_k]_{i, j_k^{(i)}}}
	= - \dfrac{1}{\norm{[W_k]_{:, j_k^{(i)}}}_2} [\nabla f (X_k)]_{i, j_k^{(i)}} > 0.
\end{equation*}
This observation reveals that, for any zero row failing to meet the stationarity condition \eqref{eq:sc-zrow}, a nonzero entry will indeed be introduced at the selected position.
Consequently, our strategy effectively activates the zero row and expands the current support set.
In reverse, the zero row $i \in \zrow (X_k)$ adheres to the stationarity condition \eqref{eq:sc-zrow} automatically if $[\nabla f (X_k)]_{i, j_k^{(i)}} \geq 0$.
A similar argument then indicates that the selected entry at $(i, j_k^{(i)})$ will be updated to either $0$ or $1$, both of which are reasonable outcomes.
On the one hand, retaining such zero rows is acceptable under condition \eqref{eq:sc-zrow}.
On the other hand, updating such entries to $1$ potentially facilitates the exploration of a broader range of support patterns in future iterations.
In either scenario, the objective function value is expected to decrease further, thereby contributing to the overall progress of our algorithm.

\subsection{Update Scheme for Support Sets}

\label{subsec:support-set}

In this subsection, we turn our attention to the construction of the next iterate $X_{k + 1} \in \Opnp$ based on the intermediate iterate $Y_k \in \Opnp$.
The aim of this stage is to find a new support set that promises a substantial reduction in the objective function value.
As the iterations proceed, some nonzero entries in particular rows gradually shrink toward zero.
This phenomenon suggests the potential for pursuing new descent directions by explicitly switching to a new support set.
Building on this insight, we devise an update scheme to adjust the positions of nonzero entries for such rows.

Let $\delta \in (0, 1)$ be a constant.
We identify the rows of $Y_k$ whose norms do not exceed a prescribed threshold as follows,
\begin{equation}
	\label{eq:srow}
	\srow (Y_k, \delta_k) = \hkh{ i \mid 0 < \norm{[Y_k]_{i, :}}_2 \leq \delta_k, i \in \{1, 2, \dotsc, n\} },
\end{equation}
where $\delta_k = \max\{ \delta, \min\{ [Y_k]_{i, j} \mid (i, j) \in \supp (Y_k)\} \}$.
Since each row of $Y_k$ contains at most a single nonzero entry, the $\ell_2$-norm of $[Y_k]_{i, :}$ precisely coincides with the value of that entry at the $i$-th row.
Accordingly, the set $\srow (Y_k, \delta_k)$ collects the indices of rows whose nonzero entries do not exceed $\delta_k$.
From the definition of $\delta_k$, it directly follows that $\srow (Y_k, \delta_k)$ must contain at least the row corresponding to the smallest nonzero entry of $Y_k$.
The zero rows in $Y_k$ are excluded from $\srow (Y_k, \delta_k)$, as they can be updated by invoking the refined strategy outlined in Section \ref{subsec:zero-rows}.

For each entry in $\srow (Y_k, \delta_k)$, we explore relocating its nonzero entry to other columns and select the column that yields the lowest function value, which in turn delineates the updated support set.
To make the description more precise, we denote the indices of the selected rows by
\begin{equation}
	\label{eq:srow-u}
	\srow (Y_k, \delta_k) = \{u^{(1)}, u^{(2)}, \dotsc, u^{(r_k)}\},
\end{equation}
with $1 \leq r_k \leq n$.
Moreover, the notation $\hat{Y}_{k}^{(t)} \in \Opnp$ stands for the intermediate iterate obtained after updating the first $t$ rows in $\srow (Y_k, \delta_k)$.
Starting from $\hat{Y}_{k}^{(0)} = Y_k$, we sequentially update the positions of nonzero entries in the rows specified by $\srow (Y_k, \delta_k)$ to generate the next iterate $X_{k + 1} \in \Opnp$.

To elucidate our strategy, we take as an example the update of the $u^{(t)}$-th row in $\hat{Y}_{k}^{(t - 1)}$ for $t \in \{1, 2, \dotsc, r_k\}$.
It is noteworthy that, reassigning the positions of nonzero entries equal to $1$ would inevitably result in zero columns within the matrix, which violates the feasibility of $\Opnp$.
Let $(u^{(t)}, w^{(t)})$ be the original position of the nonzero entry in the $u^{(t)}$-th row of $\hat{Y}_{k}^{(t - 1)}$.
If $[\hat{Y}_{k}^{(t - 1)}]_{u^{(t)}, w^{(t)}} = 1$, we simply take $\hat{Y}_{k}^{(t)}$ to be $\hat{Y}_{k}^{(t - 1)}$.
Then our focus is shifted to the situation where $[\hat{Y}_{k}^{(t - 1)}]_{u^{(t)}, w^{(t)}} < 1$.
Our algorithm attempts to relocate the nonzero entry in the $u^{(t)}$-th row of $\hat{Y}_{k}^{(t - 1)}$ to the $v$-th column.
Based on the support set of $\hat{Y}_{k}^{(t - 1)}$, the following sign matrix $\hat{S}_{k}^{(t, v)} \in \sign (\Opnp)$ is constructed accordingly to guide this update,
\begin{equation}
	\label{eq:sign-cl}
	\left\{
	\begin{aligned}
		& [\hat{S}_{k}^{(t, v)}]_{i, :} = \sign ([\hat{Y}_{k}^{(t - 1)}]_{i, :}), \mbox{ for } i \notin \zrow (Y_k) \mbox{ and } i \neq u^{(t)}, \\
		& [\hat{S}_{k}^{(t, v)}]_{i, \hat{j}_k^{(i)}} = 1, \; [\hat{S}_{k}^{(t, v)}]_{i, j} = 0, \mbox{ for } i \in \zrow (Y_k) \mbox{ and } j \neq \hat{j}_k^{(i)}, \\
		& [\hat{S}_{k}^{(t, v)}]_{i, v} = 1, \; [\hat{S}_{k}^{(t, v)}]_{i, j} = 0, \mbox{ for } i = u^{(t)} \mbox{ and } j \neq v,
	\end{aligned}
	\right.
\end{equation}
where, for each $i \in \zrow (Y_k)$, it holds that
\begin{equation}
	\label{eq:hatj}
	\hat{j}_k^{(i)} = \min \hkh{ j\uast \;\middle|\; j\uast \in \argmin_{ j \in \{1, 2, \dotsc, p\} } [\nabla f (Y_k)]_{i, j} }.
\end{equation}
A closer inspection illustrates that, aside from the zero rows of $Y_k$ and the $u^{(t)}$-th row targeted for update, $\hat{S}_{k}^{(t, v)}$ faithfully maintains the positions of nonzero entries in $\hat{Y}_{k}^{(t - 1)}$.
The zero rows of $Y_k$ are treated by the refined strategy introduced in the prior subsection, whereas the nonzero entry of the $u^{(t)}$-th row is reassigned to the $v$-th column.
Furthermore, it follows from the condition $[\hat{Y}_{k}^{(t - 1)}]_{u^{(t)}, w^{(t)}} < 1$ that $\hat{S}_{k}^{(t, v)}$ corresponds to the sign pattern of an element in $\Opnp$.

For the purpose of generating a candidate of the next iterate, we then proceed to minimize $\bar{f}_{Y_k}$ within the support set specified by $\hat{S}_{k}^{(t, v)}$ as follows,
\begin{equation}
	\label{opt:subp-cl}
	\begin{aligned}
		\hat{Y}_{k}^{(t, v)} \in \argmin_{X \in \Opnp} \hspace{2mm} & \bar{f}_{Y_k} (X) \\
		\st \hspace{3.5mm} & \supp (X) \subseteq \supp (\hat{S}_{k}^{(t, v)}).
	\end{aligned}
\end{equation}
By invoking Proposition \ref{prop:supp}, we know that the $j$-th column of $\hat{Y}_{k}^{(t, v)}$, for all $j \in \{1, 2, \dotsc, p\}$, admits the following closed-form expression,
\begin{equation}
	\label{eq:sol-subp-cl}
	[\hat{Y}_{k}^{(t, v)}]_{:, j} =
	\left\{
	\begin{aligned}
		& \dfrac{[\hat{W}_k^{(t, v)}]_{:, j}}{\norm{[\hat{W}_k^{(t, v)}]_{:, j}}_2},
		&& \mbox{if } [\hat{W}_k^{(t, v)}]_{:, j} \neq 0, \\
		& \hspace{3mm} [I_n]_{:, \hat{i}_k^{(j)}},
		&& \mbox{otherwise},
	\end{aligned}
	\right.
\end{equation}
with
\begin{equation*}
	\hat{W}_k^{(t, v)} = \max \hkh{ 0, ( \eta Y_k - \nabla f (Y_k) ) \odot \hat{S}_k^{(t, v)} } \in \Rpnp,
\end{equation*}
and
\begin{equation*}
	\hat{i}_k^{(j)} = \min \hkh{ i\uast \;\middle|\; i\uast \in \argmin_{ i \in \supp ([\hat{S}_k^{(t, v)}]_{:, j}) } [\nabla f (Y_k) - \eta Y_k]_{i, j} }.
\end{equation*}

It is worth emphasizing that the global minimizer $\hat{Y}_{k}^{(t, v)}$ obtained from subproblem \eqref{opt:subp-cl} does not necessarily lead to a reduction in the function value of $\bar{f}_{Y_k}$.
In fact, it is possible that
\begin{equation*}
	\bar{f}_{Y_k} (\hat{Y}_{k}^{(t, v)}) > \bar{f}_{Y_k} (\hat{Y}_{k}^{(t - 1)}),
\end{equation*}
as the support set of $\hat{Y}_{k}^{(t, v)}$ may be distinct from that of $\hat{Y}_{k}^{(t - 1)}$.
To mitigate this issue, we exhaustively explore all possible target columns $v \in \{1, 2, \dotsc, p\}$ and solve subproblem \eqref{opt:subp-cl} for each case.
Among the resulting candidates, we identify the one that yields the lowest function value of $\bar{f}_{Y_k}$, denoted by $\hat{Y}_{k}^{(t)} = \hat{Y}_{k}^{(t, v^{(t)})}$ with
\begin{equation}
	\label{eq:yk-ast}
	v^{(t)} = \min \hkh{ v\uast \;\middle|\; v\uast \in \argmin_{ v \in \{1, 2, \dotsc, p\} } \bar{f}_{Y_k} (\hat{Y}_{k}^{(t, v)}) }.
\end{equation}
Let us recall that $w^{(t)}$ is the column index in which the nonzero entry of the $u^{(t)}$-th row originally resides.
Then it follows from the definition of $\hat{Y}_{k}^{(t)}$ that
\begin{equation}
	\label{eq:haty-t-ast}
	\bar{f}_{Y_k} (\hat{Y}_{k}^{(t)})
	= \bar{f}_{Y_k} (\hat{Y}_{k}^{(t, v^{(t)})})
	\leq \bar{f}_{Y_k} (\hat{Y}_{k}^{(t, w^{(t)})})
	\leq \bar{f}_{Y_k} (\hat{Y}_{k}^{(t - 1)}),
\end{equation}
where the second inequality holds since $\hat{Y}_{k}^{(t - 1)}$ is also feasible for subproblem \eqref{opt:subp-cl} with $v = w^{(t)}$.
It may happen that $v^{(t)} = w^{(t)}$, which indicates that the current configuration of the $u^{(t)}$-th row is retained without modification.
Once all $r_k$ rows in $\srow (Y_k, \delta_k)$ have been updated, we obtain the next iterate $X_{k + 1} = \hat{Y}_{k}^{(r_k)}$.

Within the framework of the above construction, the next iterate $X_{k + 1} \in \Opnp$ can be interpreted as the optimal solution of the following subproblem,
\begin{equation}
	\label{opt:subp-star}
	\begin{aligned}
		X_{k + 1} \in \argmin_{X \in \Opnp} \hspace{2mm} & \bar{f}_{Y_k} (X) \\
		\st \hspace{3.5mm} & \supp (X) \subseteq \supp (\hat{S}_{k}),
	\end{aligned}
\end{equation}
where the sign matrix $\hat{S}_{k} \in \sign (\Opnp)$ is given by
\begin{equation*}
	\left\{
	\begin{aligned}
		& [\hat{S}_{k}]_{i, :} = \sign ([Y_k]_{i, :}), \mbox{ for } i \notin \zrow (Y_k) \mbox{ and } i \notin \srow (Y_k, \delta_k), \\
		& [\hat{S}_{k}]_{i, \hat{j}_k^{(i)}} = 1, \; [\hat{S}_{k}]_{i, j} = 0 \mbox{ for } i \in \zrow (Y_k) \mbox{ and } j \neq \hat{j}_k^{(i)}, \\
		& [\hat{S}_{k}]_{u^{(t)}, v^{(t)}} = 1, \; [\hat{S}_{k}]_{u^{(t)}, j} = 0, \mbox{ for } t \in \{1, 2, \dotsc, r_k\} \mbox{ and } j \neq v^{(t)}.
	\end{aligned}
	\right.
\end{equation*}
The explicit expression of $X_{k + 1}$ can be derived in a manner akin to that of \eqref{eq:sol-subp-cl}, and it is omitted here for the sake of brevity.

\subsection{Complete Framework}

This subsection integrates the processes described in the preceding parts to formulate a complete algorithmic framework for problem \eqref{opt:stplus}.
We refer to it as \textit{support-set algorithm}, which is denoted by \SUPPORT.

In practice, it is often unnecessary to update the support set with high frequency.
Instead, we only need to do so when the objective function value fails to exhibit adequate descent within the current support set.
This selective strategy mitigates the risk of switching to suboptimal support sets that may yield higher objective function values, thereby substantially reducing computational burden.
Lemma \ref{le:lip-f} unveils that the reduction in the objective function value, from $f (X_k)$ to $f (Y_k)$, is proportional to $\norm{Y_k - X_k}\ffs$.
Accordingly, we determine whether to preserve the support set or not by comparing $\norm{Y_k - X_k}\ff$ with a prescribed constant $\theta > 0$.
If $\norm{Y_k - X_k}\ff \geq \theta$, it suggests that the objective function value can still achieve sufficient descent within the current support set, which is thus retained.
In this case, we directly set $X_{k + 1} = Y_k$.
Otherwise, the support set is updated to facilitate further progress.
We adopt the procedure described in Section \ref{subsec:support-set} to generate the next iterate $X_{k + 1}$.
Algorithm~\ref{alg:supps} outlines the complete framework of \SUPPORT for problem \eqref{opt:stplus} with $1 < p < n$.

\begin{algorithm2e}[ht]
	\caption{support-set algorithm (\SUPPORT).}
	\label{alg:supps}
	
	\KwIn{$X_0 \in \Opnp$, $\eta > L$, $\delta \in (0, 1)$, and $\theta > 0$.}
	
	
	\For{$k = 0, 1, 2, \dotsc$}{
		
		Generate the sign matrix $S_k \in \sign (\Opnp)$ by \eqref{eq:sign-zrow}.
		
		Update $Y_k \in \Opnp$ by \eqref{eq:sol-subp-zrow}.
		
		\If{$\norm{Y_k - X_k}\ff \geq \theta$}{
			
			Set $X_{k + 1} = Y_k$.
			
		}
		
		\Else{
			
			Compute $\delta_k = \max\{ \delta, \min\{ [Y_k]_{i, j} \mid (i, j) \in \supp (Y_k) \} \}$.
			
			Identify $\{u^{(1)}, u^{(2)}, \dotsc, u^{(r_k)}\}$ by \eqref{eq:srow} and \eqref{eq:srow-u}.
			
			Set $\hat{Y}_{k}^{(0)} = Y_k$.
			
			\For{$t = 1, 2, \dotsc, r_k$}{
				
				\If{$[\hat{Y}_{k}^{(t - 1)}]_{u^{(t)}, w^{(t)}} = 1$}{
					
					Set $\hat{Y}_{k}^{(t)} = \hat{Y}_{k}^{(t - 1)}$.
					
				}
				
				\Else{
					
					\For{$v = 1, 2, \dotsc, p$}{
						
						Generate the sign matrix $\hat{S}_{k}^{(t, v)} \in \sign (\Opnp)$ by \eqref{eq:sign-cl}.
						
						Update $\hat{Y}_{k}^{(t, v)} \in \Opnp$ by \eqref{eq:sol-subp-cl}.
						
					}
					
					Choose $\hat{Y}_{k}^{(t)} = \hat{Y}_{k}^{(t, v^{(t)})}$ by \eqref{eq:yk-ast}.
					
				}
				
			}
			
			Set $X_{k + 1} = \hat{Y}_{k}^{(r_k)}$.
			
		}
		
	}
	
	\KwOut{$X_{k + 1}$.}
	
\end{algorithm2e}

The computational overhead of a single iteration in \SUPPORT is exceedingly low.
As previously discussed, subproblems \eqref{opt:subp-zrow} and \eqref{opt:subp-cl} both can be solved with a cost of merely $O (n)$.
During the update scheme of support sets at the $k$-th iteration, one needs to tackle subproblem \eqref{opt:subp-cl} a total of $p$ times for each of the $r_k$ rows in $\srow (Y_k, \delta_k)$.
At first glance, this procedure might appear computationally demanding; however, this is not the case.
Specifically, Proposition \ref{prop:supp} asserts that both the global minimizer and the optimal value are completely determined by the matrix $\hat{W}_k^{(t, v)}$ for subproblem \eqref{opt:subp-cl}.
Indeed, for any $t_1 \neq t_2$ and $v_1 \neq v_2$, the associated matrices $\hat{W}_k^{(t_1, v_1)}$ and $\hat{W}_k^{(t_2, v_2)}$ differ in only four entries, a structural property that enables a highly efficient implementation of this step.
Consequently, it suffices to compute the optimal value of subproblem \eqref{opt:subp-cl} in full detail once---say, for $t = 1$ and $v = 1$.
The computation in all subsequent cases with $t \neq 1$ and $v \neq 1$ just involves the update of four differing entries.
As a result, solving all $p$ instances of subproblem \eqref{opt:subp-cl} for $t \neq 1$ incurs a total computational cost of only $O (p)$.
Now we can conclude that the overall computational complexity of the $k$-th iteration is at most $O(n + r_k p)$.
In sharp contrast, existing algorithms \cite{Jiang2023exact,Qian2024error} require computing the projections onto $\Qpnp$ or $\Onp$ per iteration, with the corresponding computational costs amounting to $O (np)$ or $O (np^2)$, respectively.
Furthermore, the computational burden entailed by gradient evaluations in \SUPPORT remains modest thanks to the inherent sparsity structure.

\section{Convergence Analysis}

\label{sec:convergence}

This section delves into the convergence analysis of the proposed algorithm.
Specifically, any accumulation point of the sequence generated by Algorithm \ref{alg:supps} is shown to be a first-order stationary point.
We also provide the iteration complexity to reach an approximate first-order stationary point.
A noteworthy property of finite support identification is then established for our algorithm.
Finally, under an additional semi-algebraicity assumption, we further derive the whole-sequence convergence of the generated sequence.

\subsection{Auxiliary Results}

In this subsection, we present a collection of auxiliary and preparatory results, which serve as the foundation for the subsequent convergence analysis.

The following lemma first shows that the sequence $\{f (X_{k})\}$ of function values exhibits a sufficient descent property.

\begin{lemma}
	\label{le:des-f}
	Let $\{(X_k, Y_k)\}$ be the sequence generated by Algorithm \ref{alg:supps}.
	Then, for all $k \in \bN$, it follows that
	\begin{equation}
		\label{eq:des-f}
		f (X_k) - f (X_{k + 1})
		\geq \dfrac{\eta - L}{2} \norm{Y_k - X_k}\ffs
		+ \dfrac{\eta - L}{2} \norm{X_{k + 1} - Y_k}\ffs.
	\end{equation}
\end{lemma}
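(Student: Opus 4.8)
The plan is to establish the sufficient descent inequality \eqref{eq:des-f} by decomposing the total descent $f(X_k) - f(X_{k+1})$ into two stages that mirror the two-phase construction of the iterates: the passage from $X_k$ to the intermediate iterate $Y_k$, and the passage from $Y_k$ to $X_{k+1}$. Since Lemma \ref{le:lip-f} already delivers a descent bound of exactly this form whenever two feasible points $Z$ and $X$ satisfy $\bar{f}_Z(X) \leq \bar{f}_Z(Z)$, the entire proof reduces to verifying that this proximal-descent hypothesis holds at each of the two stages. My strategy is therefore to apply Lemma \ref{le:lip-f} twice---first with $(Z, X) = (X_k, Y_k)$ to obtain the bound $f(X_k) - f(Y_k) \geq \frac{\eta - L}{2}\norm{Y_k - X_k}\fs$, and then with $(Z, X) = (Y_k, X_{k+1})$ to obtain $f(Y_k) - f(X_{k+1}) \geq \frac{\eta - L}{2}\norm{X_{k+1} - Y_k}\fs$---and then add the two inequalities so that the $f(Y_k)$ terms telescope.

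The work lies in checking the proximal-descent hypothesis for each stage, and here I would treat the two branches of Algorithm \ref{alg:supps} separately. For the first stage, $Y_k$ is by construction the global minimizer of $\bar{f}_{X_k}$ over the support set $\supp(S_k)$ (see \eqref{opt:subp-zrow}); since $X_k$ itself is feasible for that subproblem---$S_k$ is built in \eqref{eq:sign-zrow} to retain the nonzero positions of $X_k$---optimality of $Y_k$ immediately gives $\bar{f}_{X_k}(Y_k) \leq \bar{f}_{X_k}(X_k)$, which is precisely the hypothesis of Lemma \ref{le:lip-f}. For the second stage I would split on the algorithmic test $\norm{Y_k - X_k}\ff \geq \theta$. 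In the ``If'' branch we simply set $X_{k+1} = Y_k$, whence $\norm{X_{k+1} - Y_k}\fs = 0$ and the second inequality holds trivially (with equality). In the ``Else'' branch, $X_{k+1} = \hat{Y}_k^{(r_k)}$ arises from the support-set update, and I must show $\bar{f}_{Y_k}(X_{k+1}) \leq \bar{f}_{Y_k}(Y_k)$.

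The main obstacle, and the step deserving genuine care, is this last verification in the ``Else'' branch. The monotonicity established in \eqref{eq:haty-t-ast} shows that each row-update is nonincreasing in $\bar{f}_{Y_k}$, i.e.\ $\bar{f}_{Y_k}(\hat{Y}_k^{(t)}) \leq \bar{f}_{Y_k}(\hat{Y}_k^{(t-1)})$ for every $t \in \{1, 2, \dotsc, r_k\}$, regardless of whether the $t$-th row triggers the trivial assignment $\hat{Y}_k^{(t)} = \hat{Y}_k^{(t-1)}$ or the full minimization over target columns $v$. Chaining these inequalities from $t = r_k$ down to $t = 0$ and recalling the initialization $\hat{Y}_k^{(0)} = Y_k$ yields $\bar{f}_{Y_k}(X_{k+1}) = \bar{f}_{Y_k}(\hat{Y}_k^{(r_k)}) \leq \bar{f}_{Y_k}(\hat{Y}_k^{(0)}) = \bar{f}_{Y_k}(Y_k)$. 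The delicate point is to confirm that \eqref{eq:haty-t-ast} is indeed valid for \emph{every} $t$---in particular that $\hat{Y}_k^{(t-1)}$ remains a feasible competitor in subproblem \eqref{opt:subp-cl} with target column $v = w^{(t)}$, so that the global optimality asserted by Proposition \ref{prop:supp} forces $\bar{f}_{Y_k}(\hat{Y}_k^{(t,w^{(t)})}) \leq \bar{f}_{Y_k}(\hat{Y}_k^{(t-1)})$. Once the chained bound $\bar{f}_{Y_k}(X_{k+1}) \leq \bar{f}_{Y_k}(Y_k)$ is in hand, Lemma \ref{le:lip-f} applies verbatim and the proof concludes by summing the two stage-wise descent estimates.
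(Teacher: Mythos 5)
Your proposal is correct and follows essentially the same route as the paper's proof: two applications of Lemma \ref{le:lip-f}, with the first hypothesis verified via the feasibility of $X_k$ for subproblem \eqref{opt:subp-zrow} and the second via chaining the per-row monotonicity \eqref{eq:haty-t-ast} from $\hat{Y}_k^{(0)} = Y_k$ to $\hat{Y}_k^{(r_k)} = X_{k+1}$. Your explicit treatment of the trivial branch $X_{k+1} = Y_k$ is a minor addition the paper leaves implicit, but the argument is otherwise identical.
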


\begin{proof}
	From the construction of the sign matrix $S_k$ in \eqref{eq:sign-zrow}, we can obtain that $\supp (X_k) \subseteq \supp (S_k)$, which indicates that $X_k$ is a feasible point of subproblem \eqref{opt:subp-zrow}.
	Then the global optimality of $Y_k$ implies that $\bar{f}_{X_k} (Y_k) \leq \bar{f}_{X_k} (X_k)$.
	As a direct consequence of Lemma \ref{le:lip-f}, we can proceed to show that
	\begin{equation}
		\label{eq:des-f-xk}
		f (X_k) - f (Y_k) \geq \dfrac{\eta - L}{2} \norm{Y_k - X_k}\ffs.
	\end{equation}
	The update scheme of $\hat{Y}_{k}^{(t)}$ indicates that either $\hat{Y}_{k}^{(t)} = \hat{Y}_{k}^{(t - 1)}$ or it satisfies the relationship \eqref{eq:haty-t-ast}.
	In both cases, it holds that $\bar{f}_{Y_k} (\hat{Y}_{k}^{(t)}) \leq \bar{f}_{Y_k} (\hat{Y}_{k}^{(t - 1)})$.
	By applying this relationship recursively for $r_k$ successive steps, we readily arrive at
	\begin{equation*}
		\bar{f}_{Y_k} (X_{k + 1})
		= \bar{f}_{Y_k} (\hat{Y}_{k}^{(r_k)})
		\leq \bar{f}_{Y_k} (\hat{Y}_{k}^{(0)})
		= \bar{f}_{Y_k} (Y_k).
	\end{equation*}
	Similarly, it follows from Lemma \ref{le:lip-f} that
	\begin{equation}
		\label{eq:des-f-yk}
		f (Y_k) - f (X_{k + 1}) \geq \dfrac{\eta - L}{2} \norm{X_{k + 1} - Y_k}\ffs.
	\end{equation}
	Now we can obtain the assertion \eqref{eq:des-f} of this lemma by collecting two relationships \eqref{eq:des-f-xk} and \eqref{eq:des-f-yk} together.
	The proof is completed.
\end{proof}

As an immediate corollary of Lemma \ref{le:des-f}, we proceed to establish that the distance between two consecutive iterates generated by Algorithm \ref{alg:supps} converges to zero.

\begin{corollary}
	\label{coro:dist-iter}
	Let $\{(X_k, Y_k)\}$ be the sequence generated by Algorithm \ref{alg:supps}.
	Then it holds that
	\begin{equation}
		\label{eq:limit-x}
		\lim_{k \to \infty} \norm{Y_k - X_k}\ffs + \norm{X_{k + 1} - Y_k}\ffs = 0.
	\end{equation}
\end{corollary}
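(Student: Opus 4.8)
The plan is to derive the corollary directly from the sufficient-descent inequality \eqref{eq:des-f} in Lemma \ref{le:des-f} via a standard telescoping argument. The key observation is that \eqref{eq:des-f} shows the sequence $\{f(X_k)\}$ is monotonically nonincreasing, and that the per-iteration decrease in the objective function value controls exactly the quantity $\norm{Y_k - X_k}\fs + \norm{X_{k + 1} - Y_k}\fs$ whose limit we wish to show vanishes.

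First I would verify that $\{f(X_k)\}$ is bounded below. Since $f$ is continuous (Assumption \ref{asp:objective}) and the feasible set $\Opnp$ is compact --- being the intersection of the compact Stiefel manifold $\Onp$ with the closed nonnegative cone $\Rpnp$ --- the function $f$ attains a finite minimum over $\Opnp$. As every iterate $X_k$ remains in $\Opnp$ by the feasibility-preserving construction of Algorithm \ref{alg:supps}, we have $f(X_k) \geq f_{\min} := \min_{X \in \Opnp} f(X) > -\infty$ for all $k$.

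Next I would telescope. Summing inequality \eqref{eq:des-f} over $k = 0, 1, \dotsc, K - 1$ gives
\begin{equation*}
	f (X_0) - f (X_K)
	\geq \dfrac{\eta - L}{2} \sum_{k = 0}^{K - 1} \dkh{ \norm{Y_k - X_k}\fs + \norm{X_{k + 1} - Y_k}\fs }.
\end{equation*}
Because $f(X_K) \geq f_{\min}$, the left-hand side is bounded above by $f(X_0) - f_{\min}$, which is a finite constant independent of $K$. Since $\eta > L$, the factor $(\eta - L)/2$ is strictly positive, so letting $K \to \infty$ shows that the nonnegative series $\sum_{k = 0}^{\infty} ( \norm{Y_k - X_k}\fs + \norm{X_{k + 1} - Y_k}\fs )$ converges. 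Convergence of a series forces its general term to tend to zero, which is precisely the assertion \eqref{eq:limit-x}.

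There is no genuine obstacle here; the argument is entirely routine once Lemma \ref{le:des-f} is in hand. The only point requiring a moment of care is the boundedness-below of $\{f(X_k)\}$, which I would justify by invoking compactness of $\Opnp$ together with feasibility of all iterates rather than by any additional hypothesis on $f$. Everything else follows from the telescoping inequality and the strict positivity of $\eta - L$.
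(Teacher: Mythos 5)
Your proposal is correct and follows essentially the same route as the paper: both telescope the sufficient-descent inequality \eqref{eq:des-f}, bound the partial sums using compactness of the feasible set and continuity of $f$ (the paper bounds $f$ over $\Onp$, you over $\Opnp$; either works since all iterates lie in $\Opnp \subseteq \Onp$), and conclude from convergence of the nonnegative series that its general term vanishes. No gaps.
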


\begin{proof}
	Since $\Onp$ is a compact manifold and $f$ is continuous over $\Onp$, there exist two constants $\underline{f}$ and $\overline{f}$ such that
	\begin{equation}
		\label{eq:bound-f}
		\underline{f} \leq f (X) \leq \overline{f},
	\end{equation}
	for any $X \in \Onp$.
	Summing the relationship \eqref{eq:des-f} over $k$ from $0$ to $K - 1$ results in that
	\begin{equation*}
		f (X_0) - f (X_K)
		\geq \dfrac{\eta - L}{2} \sum_{k = 0}^{K - 1} \dkh{ \norm{Y_k - X_k}\ffs + \norm{X_{k + 1} - Y_k}\ffs },
	\end{equation*}
	which further implies that
	\begin{equation}
		\label{eq:sum-norm}
		\sum_{k = 0}^{K - 1} \dkh{ \norm{Y_k - X_k}\ffs + \norm{X_{k + 1} - Y_k}\ffs }
		\leq \dfrac{2}{\eta - L} \dkh{f (X_0) - f (X_K)}
		\leq \dfrac{2 (\overline{f} - \underline{f})}{\eta - L}.
	\end{equation}
	Passing to the limit $K \to \infty$ in \eqref{eq:sum-norm} immediately yields the conclusion asserted in \eqref{eq:limit-x}.
	We complete the proof.
\end{proof}

Another important result reveals that the stationarity violation can be controlled in terms of the distance between consecutive iterates, as articulated in the proposition below.

\begin{proposition}
	\label{prop:fosc}
	Let $\{(X_k, Y_k)\}$ be the sequence generated by Algorithm \ref{alg:supps}.
	Then the following relationship is satisfied,
	\begin{equation}
		\label{eq:supp-yk}
		\abs{[\grad\, f (Y_k)]_{i, j}} \leq 2 (\eta + L) \norm{Y_k - X_k}\ff,
	\end{equation}
	for all $(i, j) \in \supp (Y_k)$.
	Moreover, if $\norm{Y_k - X_k}\ff < \theta$, there exists a constant $M > 0$ such that
	\begin{equation}
		\label{eq:zrow-yk}
		\max\{0, - [\nabla f (Y_k)]_{i, j}\} \leq (\eta + M) \norm{X_{k + 1} - Y_k}\ff,
	\end{equation}
	for all $i \in \zrow (Y_k)$ and $j \in \{1, 2, \dotsc, p\}$.
\end{proposition}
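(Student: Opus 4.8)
The plan is to exploit the closed-form expressions from Proposition \ref{prop:supp}, which furnish exact algebraic identities at the active positions of $Y_k$, and to combine them with the Lipschitz continuity of $\nabla f$ and the compactness of $\Onp$.

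For the first estimate \eqref{eq:supp-yk}, fix a column $j$ with $[W_k]_{:, j} \neq 0$ and a row $i$ with $(i, j) \in \supp (Y_k)$. By \eqref{eq:sol-subp-zrow}, $[Y_k]_{:, j} = [W_k]_{:, j} / \beta_j$ with $\beta_j := \norm{[W_k]_{:, j}}_2$, and since $[W_k]_{i, j} > 0$ on the support we obtain the exact identity $[\nabla f (X_k)]_{i, j} = \eta [X_k]_{i, j} - \beta_j [Y_k]_{i, j}$. I would substitute this (together with its analogue summed against $[Y_k]_{:, j}$, giving the diagonal term $[Y_k]_{:, j}\zz [\nabla f (Y_k)]_{:, j} = \eta c_j - \beta_j + e_j$) into the Riemannian gradient $[\grad\, f (Y_k)]_{i, j} = [\nabla f (Y_k)]_{i, j} - [Y_k]_{i, j}\, [Y_k]_{:, j}\zz [\nabla f (Y_k)]_{:, j}$, after splitting $\nabla f (Y_k) = \nabla f (X_k) + (\nabla f (Y_k) - \nabla f (X_k))$. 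The crucial feature is that the two occurrences of $\beta_j$ cancel, leaving
\[
	[\grad\, f (Y_k)]_{i, j} = \eta \dkh{[X_k]_{i, j} - c_j [Y_k]_{i, j}} + [\nabla f (Y_k) - \nabla f (X_k)]_{i, j} - e_j [Y_k]_{i, j},
\]
where $c_j := [Y_k]_{:, j}\zz [X_k]_{:, j}$ and $e_j := [Y_k]_{:, j}\zz [\nabla f (Y_k) - \nabla f (X_k)]_{:, j}$.

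The three terms are then bounded separately. For the first I would use the unit-norm identities $\norm{[X_k]_{:, j}}_2 = \norm{[Y_k]_{:, j}}_2 = 1$ to write $\norm{[X_k]_{:, j} - c_j [Y_k]_{:, j}}_2^2 = 1 - c_j^2 \leq 2 (1 - c_j) = \norm{[X_k]_{:, j} - [Y_k]_{:, j}}_2^2$, so that $\abs{[X_k]_{i, j} - c_j [Y_k]_{i, j}} \leq \norm{Y_k - X_k}\ff$; the remaining two terms are controlled by $L \norm{Y_k - X_k}\ff$ each via Lipschitz continuity (with Cauchy--Schwarz for $e_j$). Summing yields $\abs{[\grad\, f (Y_k)]_{i, j}} \leq (\eta + 2 L) \norm{Y_k - X_k}\ff \leq 2 (\eta + L) \norm{Y_k - X_k}\ff$. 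The degenerate columns with $[W_k]_{:, j} = 0$ are handled separately: there $[Y_k]_{:, j}$ is a coordinate vector, and a direct computation shows the Riemannian gradient vanishes identically at its unique support entry, so \eqref{eq:supp-yk} holds trivially.

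For the second estimate \eqref{eq:zrow-yk}, the hypothesis $\norm{Y_k - X_k}\ff < \theta$ places us in the support-updating branch, where $X_{k + 1}$ solves \eqref{opt:subp-star} with sign matrix $\hat{S}_k$. Fix $i \in \zrow (Y_k)$ and set $j^{\ast} = \hat{j}_k^{(i)}$ from \eqref{eq:hatj}, which by construction minimizes $[\nabla f (Y_k)]_{i, \cdot}$ and satisfies $[\hat{S}_k]_{i, j^{\ast}} = 1$. Since $[Y_k]_{i, j^{\ast}} = 0$, the closed form at $(i, j^{\ast})$ in the nonzero-column case reads $[X_{k + 1}]_{i, j^{\ast}} = \max \{0, - [\nabla f (Y_k)]_{i, j^{\ast}}\} / \norm{[\hat{W}_k]_{:, j^{\ast}}}_2$, whence
\[
	\max \{0, - [\nabla f (Y_k)]_{i, j^{\ast}}\} = [X_{k + 1}]_{i, j^{\ast}} \norm{[\hat{W}_k]_{:, j^{\ast}}}_2 = [X_{k + 1} - Y_k]_{i, j^{\ast}} \norm{[\hat{W}_k]_{:, j^{\ast}}}_2.
\]
I would then bound $\abs{[X_{k + 1} - Y_k]_{i, j^{\ast}}} \leq \norm{X_{k + 1} - Y_k}\ff$ and $\norm{[\hat{W}_k]_{:, j^{\ast}}}_2 \leq \eta + C$, taking $C := \sup_{X \in \Onp} \norm{\nabla f (X)}\ff$, which is finite by continuity of $\nabla f$ and compactness of $\Onp$. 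Minimality of $j^{\ast}$ gives $\max \{0, - [\nabla f (Y_k)]_{i, j}\} \leq \max \{0, - [\nabla f (Y_k)]_{i, j^{\ast}}\}$ for every $j$, extending the bound to all columns; the zero-column case $[\hat{W}_k]_{:, j^{\ast}} = 0$ forces $[\nabla f (Y_k)]_{i, j^{\ast}} \geq 0$, so the left-hand side vanishes for all $j$.

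The main obstacle is the cancellation step in \eqref{eq:supp-yk}: one must recognize that inserting the optimality identity causes the a priori unbounded normalizer $\beta_j$ to drop out, so that the Riemannian gradient collapses to a difference $[X_k]_{i, j} - c_j [Y_k]_{i, j}$ measuring the deviation of one unit vector from another, plus genuinely small Lipschitz remainders. The accompanying geometric inequality $1 - c_j^2 \leq \norm{[X_k]_{:, j} - [Y_k]_{:, j}}_2^2$ is the other point requiring care, while the degenerate-column bookkeeping and the extraction of the uniform constant $C$ from compactness are routine.
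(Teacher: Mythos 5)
Your proof is correct and follows essentially the same route as the paper's: both exploit the first-order identity satisfied by the closed-form minimizer of subproblem \eqref{opt:subp-zrow} so that the normalizer $\norm{[W_k]_{:,j}}_2$ cancels out of the Riemannian gradient, leaving only Lipschitz-controlled differences, and both read off $[X_{k+1}]_{i,\hat{j}_k^{(i)}}$ from the closed form of \eqref{opt:subp-star} and bound $\norm{[\hat{W}_k]_{:,\hat{j}_k^{(i)}}}_2 \leq \eta + C$ for the zero-row estimate. The only deviation is cosmetic: you regroup the two $\eta$-terms into $\eta\dkh{[X_k]_{i,j} - c_j [Y_k]_{i,j}}$ and invoke $1 - c_j^2 \leq \norm{[X_k]_{:,j} - [Y_k]_{:,j}}_2^2$, which yields the marginally sharper constant $\eta + 2L$ in place of the paper's $2(\eta + L)$, while the zero-row argument is the paper's verbatim.
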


\begin{proof}
	The first purpose is to show that the following relationship holds for all $i \in \supp ([Y_k]_{:, j})$ and $j \in \{1, 2, \dotsc, p\}$,
	\begin{equation}
		\label{eq:sc-yk}
		[\eta X_k - \nabla f (X_k)]_{i, j} - \dkh{[Y_k]_{:, j}\zz [\eta X_k - \nabla f (X_k)]_{:, j}} [Y_k]_{i, j} = 0.
	\end{equation}
	For the $j$-th column satisfying $[W_k]_{:, j} = 0$, we have $\supp ([Y_k]_{:, j}) = i_k^{(j)}$.
	It can be readily verified that the relationship \eqref{eq:sc-yk} holds for $i =  i_k^{(j)}$ based on the closed-form expression of $Y_k$ given in \eqref{eq:sol-subp-zrow}.
	Then we consider the scenario where $[W_k]_{:, j} \neq 0$.
	For all $i \in \supp ([Y_k]_{:, j}) = \supp ([W_k]_{:, j})$, it holds that $[Y_k]_{i, j} = [W_k]_{i, j} / \norm{[W_k]_{:, j}}_2$ and $[W_k]_{i, j} = [\eta X_k - \nabla f (X_k)]_{i, j} > 0$.
	By straightforward calculations, we can obtain that
	\begin{equation*}
		\begin{aligned}
			{[Y_k]_{:, j}}\zz [\eta X_k - \nabla f (X_k)]_{:, j}
			= {} & \sum_{i \in \supp ([Y_k]_{:, j})} [Y_k]_{i, j} [\eta X_k - \nabla f (X_k)]_{i, j} \\
			= {} & \dfrac{1}{\norm{[W_k]_{:, j}}_2} \sum_{i \in \supp ([W_k]_{:, j})} [W_k]_{i, j}^2
			= \norm{[W_k]_{:, j}}_2.
		\end{aligned}
	\end{equation*}
	The above equality directly implies that the relationship \eqref{eq:sc-yk} holds for all $i \in \supp ([Y_k]_{:, j})$.
	
	Next, we proceed to prove that $Y_k$ satisfies the condition \eqref{eq:supp-yk}.
	Let $(i, j) \in \supp (Y_k)$.
	According to the relationship \eqref{eq:sc-yk}, it follows that
	\begin{equation*}
		\begin{aligned}
			{[\grad\, f (Y_k)]_{i, j}}
			= {} & [\nabla f (Y_k)]_{i, j} - \dkh{[Y_k]_{:, j}\zz [\nabla f (Y_k)]_{:, j}} [Y_k]_{i, j} \\
			= {} & [\nabla f (Y_k)]_{i, j}
			- \dkh{[Y_k]_{:, j}\zz [\nabla f (Y_k)]_{:, j}} [Y_k]_{i, j}
			+ [\eta X_k - \nabla f (X_k)]_{i, j} \\
			& - \dkh{[Y_k]_{:, j}\zz [\eta X_k - \nabla f (X_k)]_{:, j}} [Y_k]_{i, j} \\
			= {} & [\nabla f (Y_k) - \nabla f (X_k)]_{i, j} - \eta [Y_k - X_k]_{i, j} \\
			& + \dkh{ [Y_k]_{:, j}\zz \dkh{[\nabla f (X_k) - \nabla f (Y_k) + \eta (Y_k - X_k)]_{:, j}} } [Y_k]_{i, j},
		\end{aligned}
	\end{equation*}
	which together with the Lipschitz continuity of $\nabla f$ yields that
	\begin{equation*}
		\begin{aligned}
			\abs{[\grad\, f (Y_k)]_{i, j}}
			\leq {} & \abs{[\nabla f (Y_k) - \nabla f (X_k)]_{i, j}} + \eta \abs{[Y_k - X_k]_{i, j}} \\
			& + \abs{ [Y_k]_{:, j}\zz \dkh{[\nabla f (X_k) - \nabla f (Y_k) + \eta (Y_k - X_k)]_{:, j}} } \\
			\leq {} & \abs{[\nabla f (Y_k) - \nabla f (X_k)]_{i, j}} + \eta \abs{[Y_k - X_k]_{i, j}} \\
			& + \norm{[\nabla f (X_k) - \nabla f (Y_k) + \eta (Y_k - X_k)]_{:, j}}_2 \\
			\leq {} & 2 (\eta + L) \norm{Y_k - X_k}\ff.
		\end{aligned}
	\end{equation*}
	Thus, the relationship \eqref{eq:supp-yk} is satisfied for all $(i, j) \in \supp (Y_k)$.
	
	Finally, we consider an arbitrary zero row $i \in \zrow (Y_k)$ when $\norm{Y_k - X_k}\ff < \theta$.
	If $[\nabla f (Y_k)]_{i, \hat{j}_k^{(i)}} \geq 0$, the iterate $Y_k$ adheres to the condition \eqref{eq:zrow-yk} automatically.
	Then our attention is confined to the case where $[\nabla f (Y_k)]_{i, \hat{j}_k^{(i)}} < 0$.
	Let $\hat{W}_k = \max \{0, ( \eta Y_k - \nabla f (Y_k) ) \odot \hat{S}_k\} \in \Rpnp$.
	As a direct consequence of Proposition \ref{prop:supp} and the relationship \eqref{opt:subp-star}, we can show that
	\begin{equation*}
		[X_{k + 1}]_{i, \hat{j}_k^{(i)}}
		= - \dfrac{1}{\norm{[\hat{W}_k]_{:, \hat{j}_k^{(i)}}}_2} [\nabla f (Y_k)]_{i, \hat{j}_k^{(i)}} > 0.
	\end{equation*}
	Since $f$ is continuously differentiable over the compact manifold $\Onp$, there exists a constant $M > 0$ such that $\norm{\nabla f (X)}\ff \leq M$ for all $X \in \Onp$.
	Hence, we can obtain that
	\begin{equation*}
		\begin{aligned}
			\norm{[\hat{W}_k]_{:, \hat{j}_k^{(i)}}}_2
			= {} & \norm{\max \hkh{ 0, [( \eta Y_k - \nabla f (Y_k) ) \odot \hat{S}_k]_{:, \hat{j}_k^{(i)}} }}_2
			\leq \norm{ [\eta Y_k - \nabla f (Y_k)]_{:, \hat{j}_k^{(i)}} }_2 \\
			\leq {} & \eta \norm{ [Y_k]_{:, \hat{j}_k^{(i)}} }_2
			+ \norm{ [\nabla f (Y_k)]_{:, \hat{j}_k^{(i)}} }_2
			\leq \eta + M,
		\end{aligned}
	\end{equation*}
	which further implies that
	\begin{equation*}
		\norm{X_{k + 1} - Y_k}\ff
		\geq \abs{[X_{k + 1} - Y_k]_{i, \hat{j}_k^{(i)}}}
		= \dfrac{- [\nabla f (Y_k)]_{i, \hat{j}_k^{(i)}}}{\norm{[\hat{W}_k]_{:, \hat{j}_k^{(i)}}}_2}
		\geq \dfrac{- [\nabla f (Y_k)]_{i, \hat{j}_k^{(i)}}}{\eta + M}.
	\end{equation*}
	According to the definition of $\hat{j}_k^{(i)}$ in \eqref{eq:hatj}, it then follows that
	\begin{equation*}
		\max\{0, - [\nabla f (Y_k)]_{i, j}\}
		\leq - [\nabla f (Y_k)]_{i, \hat{j}_k^{(i)}}
		\leq (\eta + M) \norm{X_{k + 1} - Y_k}\ff,
	\end{equation*}
	for all $j \in \{1, 2, \dotsc, p\}$.
	Therefore, we can conclude that the relationship \eqref{eq:zrow-yk} holds.
	The proof is completed.
\end{proof}

\subsection{Subsequence Convergence}

Building upon the auxiliary results derived in the preceding subsection, we proceed to establish the subsequence convergence of Algorithm~\ref{alg:supps} to a first-order stationary point of problem \eqref{opt:stplus}.
From the construction of our algorithm, it is evident that the generated sequence is feasible within $\Opnp$.

\begin{theorem}
	\label{thm:convergence}
	Any accumulation point of the sequence $\{X_k\}$ generated by Algorithm \ref{alg:supps} qualifies as a first-order stationary point of problem \eqref{opt:stplus}.
\end{theorem}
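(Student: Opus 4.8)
The plan is to fix an accumulation point $X\last$ of $\{X_k\}$, pass to a subsequence $X_{k_l} \to X\last$, and verify the two defining conditions \eqref{eq:sc-supp} and \eqref{eq:sc-zrow} at $X\last$. First I would record the consequences of the auxiliary results: Corollary \ref{coro:dist-iter} gives $\norm{Y_{k_l} - X_{k_l}}\ff \to 0$ and $\norm{X_{k_l + 1} - Y_{k_l}}\ff \to 0$, so that $Y_{k_l} \to X\last$ and $X_{k_l + 1} \to X\last$ as well. Summing \eqref{eq:des-f} shows $\sum_k (f(X_k) - f(X_{k+1})) < \infty$, and since each summand dominates the two nonnegative pieces $f(X_k) - f(Y_k)$ and $f(Y_k) - f(X_{k+1})$, both tend to zero; the relation $f(Y_k) - f(X_{k+1}) \to 0$ is the quantity I will contradict in the hard case. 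Throughout I use that $\nabla f$, and hence $\grad\, f(X) = \nabla f(X) - X \Diag(X\zz \nabla f(X))$, is continuous on the compact manifold $\Onp$.

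For the support condition, I would fix $(i,j) \in \supp(X\last)$, so $[X\last]_{i,j} > 0$. By continuity $[Y_{k_l}]_{i,j} > 0$ for all large $l$, whence $(i,j) \in \supp(Y_{k_l})$, and \eqref{eq:supp-yk} gives $\abs{[\grad\, f(Y_{k_l})]_{i,j}} \leq 2(\eta + L)\norm{Y_{k_l} - X_{k_l}}\ff \to 0$; passing to the limit with the continuity of $\grad\, f$ yields $[\grad\, f(X\last)]_{i,j} = 0$, which is exactly \eqref{eq:sc-supp}. For the zero-row condition, I would fix $i \in \zrow(X\last)$, so $[Y_{k_l}]_{i,:} \to 0$. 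Since $\norm{Y_{k_l} - X_{k_l}}\ff \to 0 < \theta$, iteration $k_l$ eventually enters the \textbf{else} branch, so \eqref{eq:zrow-yk} is available and the rows of $\srow(Y_{k_l}, \delta_{k_l})$ are all processed. Passing to a further subsequence, either (a) $i \in \zrow(Y_{k_l})$ for every $l$, or (b) row $i$ carries a single nonzero entry, located (after thinning) in a fixed column $j_0$, whose value tends to $0$. In case (a), \eqref{eq:zrow-yk} gives $\max\{0, -[\nabla f(Y_{k_l})]_{i,j}\} \leq (\eta + C)\norm{X_{k_l+1} - Y_{k_l}}\ff \to 0$ for every $j$, so $[\nabla f(X\last)]_{i,j} \geq 0$ by continuity.

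In case (b), since the nonzero value is below $\delta \leq \delta_{k_l}$ eventually, \eqref{eq:srow} gives $i \in \srow(Y_{k_l}, \delta_{k_l})$; moreover $(i, j_0) \in \supp(Y_{k_l})$, so \eqref{eq:supp-yk} together with $[Y_{k_l}]_{i,j_0} \to 0$ in the formula for $\grad\, f$ forces $[\nabla f(X\last)]_{i,j_0} = 0 \geq 0$. The remaining and decisive step is to treat $j \neq j_0$. Here I would argue by contradiction: suppose $[\nabla f(X\last)]_{i,j_1} = -2\epsilon_0 < 0$ for some $j_1 \neq j_0$, so $[\nabla f(Y_{k_l})]_{i,j_1} \leq -\epsilon_0$ for large $l$. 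Being in $\srow$, row $i$ is handled as some $u^{(t)}$, and since its entry is $< 1$ the inner loop over $v$ runs and in particular tests $v = j_1$. By Proposition \ref{prop:supp} the effect of this relocation on $\bar{f}_{Y_{k_l}}$ is confined to columns $j_0$ and $j_1$ through the quantities $\alpha_{j_0}, \alpha_{j_1}$, all evaluated with the \emph{fixed} prox center $Y_{k_l}$. Adding row $i$ to column $j_1$ injects into $\hat{W}$ the entry $\max\{0, -[\nabla f(Y_{k_l})]_{i,j_1}\} \to 2\epsilon_0 > 0$, which decreases $\alpha_{j_1}$ by a definite amount (a gain of the form $\sqrt{c^2 + w^2} - c$ with $w \to 2\epsilon_0$ and $c \leq \eta + C$); removing row $i$ from column $j_0$ perturbs $\alpha_{j_0}$ by at most the removed entry $\max\{0, \eta [Y_{k_l}]_{i,j_0} - [\nabla f(Y_{k_l})]_{i,j_0}\} \to 0$, since both $[Y_{k_l}]_{i,j_0} \to 0$ and $[\nabla f(Y_{k_l})]_{i,j_0} \to 0$. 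Thus relocating to $j_1$ lowers $\bar{f}_{Y_{k_l}}$ by some constant $\Delta > 0$ for all large $l$. Propagating through the monotone chain and the selection rule \eqref{eq:yk-ast} gives $\bar{f}_{Y_{k_l}}(X_{k_l + 1}) \leq \bar{f}_{Y_{k_l}}(\hat{Y}_{k_l}^{(t, j_1)}) \leq \bar{f}_{Y_{k_l}}(Y_{k_l}) - \Delta = f(Y_{k_l}) - \Delta$; combined with $f(X_{k_l + 1}) \leq \bar{f}_{Y_{k_l}}(X_{k_l + 1})$ (valid because $\eta > L$), this yields $f(Y_{k_l}) - f(X_{k_l + 1}) \geq \Delta$, contradicting $f(Y_k) - f(X_{k+1}) \to 0$. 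Hence $[\nabla f(X\last)]_{i,j} \geq 0$ for all $j$, which establishes \eqref{eq:sc-zrow}.

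The hard part will be the bookkeeping in case (b): one must confirm that the only columns whose $\alpha$-values move under the relocation are $j_0$ and $j_1$, and dispose of the degenerate situations in which a column's $\hat{W}$ vanishes — in particular verifying that column $j_0$ retains positive mass once row $i$ is removed, which holds because $\norm{[Y_{k_l}]_{:,j_0}}_2 = 1$ while the $(i,j_0)$-entry tends to $0$. The key structural fact that makes all these estimates survive the limit is that the prox center is fixed at $Y_{k_l}$, so every $\hat{W}$-entry is a continuous function of $Y_{k_l}$ and of $\nabla f(Y_{k_l})$.
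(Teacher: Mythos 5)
Your overall architecture coincides with the paper's: pass to a subsequence along which $X_k, Y_k, X_{k+1}$ all converge to $X\last$, obtain \eqref{eq:sc-supp} from \eqref{eq:supp-yk} and continuity, split the zero-row condition according to whether row $i$ of $Y_{k_l}$ is eventually zero (where \eqref{eq:zrow-yk} finishes the job) or carries a vanishing nonzero entry in a fixed column $j_0$, and in the latter case derive a contradiction from the relocation step via the $\alpha_j$ bookkeeping of Proposition \ref{prop:supp}. The genuine divergence is how the contradiction is closed, and that is where a gap appears. You compare the candidate $v=j_1$ against $v=j_0=w^{(t)}$, so you must control what happens to column $j_0$ when row $i$ is \emph{removed} from its support. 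Your bound on the perturbation of $\alpha_{j_0}$ by the removed entry $\max\{0,\eta[Y_{k_l}]_{i,j_0}-[\nabla f(Y_{k_l})]_{i,j_0}\}\to 0$ is only valid when $[\hat{W}_{k_l}^{(t,j_1)}]_{:,j_0}\neq 0$. If removing row $i$ empties that column of $\hat{W}$, Proposition \ref{prop:supp} switches $\alpha_{j_0}$ to the branch $\min\{[\nabla f(Y_{k_l})-\eta Y_{k_l}]_{i',j_0} \mid i'\in\supp([\hat{S}_{k_l}^{(t,j_1)}]_{:,j_0})\}$, which is nonnegative but a priori only bounded by $\eta+C$; the resulting increase of $\alpha_{j_0}$ could then swamp the definite decrease of $\alpha_{j_1}$, and the uniform gain $\Delta$ is not established. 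Your stated fix---column $j_0$ retains positive mass because $\norm{[Y_{k_l}]_{:,j_0}}_2=1$---does not close this: positivity of the entries of $[Y_{k_l}]_{:,j_0}$ does not imply positivity of the corresponding entries $\max\{0,\eta[Y_{k_l}]_{i',j_0}-[\nabla f(Y_{k_l})]_{i',j_0}\}$ of $\hat{W}$, since nothing forces $\eta$ to exceed $[Y_{k_l}]_{:,j_0}\zz[\nabla f(Y_{k_l})]_{:,j_0}$. The case can be repaired, but only by invoking the near-stationarity identity \eqref{eq:sc-yk} from the proof of Proposition \ref{prop:fosc} to show that in this degenerate branch the minimum above is itself $O(\norm{Y_{k_l}-X_{k_l}}\ff)\to 0$; that is a missing step, not routine bookkeeping.

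The paper sidesteps this entirely by orienting the comparison the other way: it compares the \emph{selected} column $l=v^{(t)}$ against $b$ and contradicts the argmin property \eqref{eq:yk-ast} directly. There the column losing row $i\last$ is $b$ itself, whose departing $\hat{W}$-entry is at least $\tau/2$, so the degenerate branch $[\hat{W}_{k}^{(t,l)}]_{:,b}=0$ only makes $\hat{Y}_{k}^{(t,l)}$ worse and both cases yield the contradiction; the only quantity that must be shown small is the contribution of row $i\last$ to column $l$, controlled by $\norm{[\hat{Y}_{k}^{(s)}]_{i\last,:}}_2<\omega$. Your alternative closing---contradicting $f(Y_k)-f(X_{k+1})\to 0$ via $f(X_{k+1})\leq\bar{f}_{Y_k}(X_{k+1})$ and the monotone chain---is legitimate once the $\Delta$-estimate is secured, and the rest of your write-up (the deduction $[\nabla f(X\last)]_{i,j_0}=0$ from \eqref{eq:supp-yk}, the boundedness of $c$ by $\eta+C$, the identification of the two affected columns) is correct.
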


\begin{proof}
	Due to the compactness of $\Opnp$, we know that the sequence $\{X_{k}\}$ is bounded.
	Then from the Bolzano-Weierstrass theorem, it can be deduced that it has at least one accumulation point.
	Let $X\last$ be an accumulation point of $\{X_{k}\}$.
	The closedness of $\Opnp$ guarantees that $X\last \in \Opnp$.
	For notational simplicity, we continue to denote by $\{X_k\}$ the subsequence converging to $X\last$.
	And it follows from Corollary~\ref{coro:dist-iter} that
	\begin{equation*}
		\lim_{k \to \infty} X_{k + 1} = \lim_{k \to \infty} Y_k = \lim_{k \to \infty} X_k = X\last.
	\end{equation*}
	To complete the proof, we now turn our attention to verifying that $X\last$ fulfills conditions \eqref{eq:sc-supp} and \eqref{eq:sc-zrow} simultaneously.
	
	Since $Y_k \in \Opnp$, it has at most $n$ nonzero entries.
	In view of the relationship \eqref{eq:supp-yk}, it can be readily verified that
	\begin{equation}
		\label{eq:yk-odot-grad}
		\begin{aligned}
			\norm{Y_k \odot \grad\, f (Y_k)}\ffs
			= {} & \sum_{(i, j) \in \supp (Y_k)} [Y_k]_{i, j}^2 [\grad\, f (Y_k)]_{i, j}^2 \\
			\leq {} & \sum_{(i, j) \in \supp (Y_k)} [\grad\, f (Y_k)]_{i, j}^2
			\leq 4 n (\eta + L)^2 \norm{Y_k - X_k}\ffs.
		\end{aligned}
	\end{equation}
	Upon taking $k \to \infty$ in \eqref{eq:yk-odot-grad}, we immediately arrive at
	\begin{equation*}
		X\last \odot \grad\, f (X\last) = 0,
	\end{equation*}
	which indicates that $X\last$ adheres to condition \eqref{eq:sc-supp}.
	
	Next, we consider the case where $\zrow (X\last) \neq \varnothing$ and show that condition \eqref{eq:sc-zrow} is satisfied.
	Let $i\last \in \zrow (X\last)$ and $\bK = \{k \in \bN \mid \norm{[Y_k]_{i\last, :}}_2 = 0\}$ be an index set.
	If $\bK$ is infinite, it is clear that $\lim_{\bK \ni k \to \infty} Y_k = X\last$.
	By passing to the limit $\bK \ni k \to \infty$ in \eqref{eq:zrow-yk}, we have
	\begin{equation*}
		\max \hkh{ 0, - [\nabla f (X\last)]_{i\last, j} } = 0,
	\end{equation*}
	for all $j \in \{1, 2, \dotsc, p\}$.
	The above relationship guarantees that $X\last$ satisfies condition \eqref{eq:sc-zrow}.
	Our analysis henceforth centers on the scenario where $\bK$ is finite.
	We assume on the contrary that condition \eqref{eq:sc-zrow} does not hold for $i\last \in \zrow (X\last)$.
	Hence, there exist $b \in \{1, 2, \dotsc, p\}$ and $\tau > 0$ such that
	\begin{equation*}
		[\nabla f (X\last)]_{i\last, b} = - \tau.
	\end{equation*}
	Let $\omega > 0$ be a constant defined as
	\begin{equation*}
		\omega = \min \hkh{ \dfrac{1}{2}, \, \delta, \, \dfrac{\tau}{2 (\eta + M)}, \, \dfrac{\tau^2}{8 (\eta + M)^2} }.
	\end{equation*}
	Since $\bK$ is finite, there exists $k \in \bN$ such that
	\begin{equation*}
		[\nabla f (Y_{k})]_{i\last, b} \leq - \dfrac{\tau}{2},
		\quad
		0 < \norm{[Y_{k}]_{i\last, :}}_2 < \omega,
		\quad\mbox{and}\quad
		0 < \norm{[\hat{Y}_{k}^{(s)}]_{i\last, :}}_2 < \omega,
	\end{equation*}
	for all $1 \leq s \leq r_{k}$.
	From the definition of $\omega$, we know that $i\last \in \srow (Y_k, \delta_k)$.
	Suppose that $i\last$ is the $t$-th element in $\srow (Y_k, \delta_k)$ and $l = v^{(t)}$.
	In the subsequent discussion, we will show that $\bar{f}_{Y_{k}} (\hat{Y}_{k}^{(t, l)}) > \bar{f}_{Y_{k}} (\hat{Y}_{k}^{(t, b)})$, leading to a contradiction with the definition of $v^{(t)}$ in \eqref{eq:yk-ast}.
	
	According to Proposition \ref{prop:supp}, it then follows that $\hat{Y}_{k}^{(t)} = \hat{Y}_{k}^{(t, l)}$, $[\hat{Y}_{k}^{(t, l)}]_{i\last, l} \in (0, \omega)$, and $[\hat{Y}_{k}^{(t, l)}]_{:, l} = [\hat{W}_{k}^{(t, l)}]_{:, l} / \| [\hat{W}_{k}^{(t, l)}]_{:, l} \|_2$.
	If $l = b$, we have
	\begin{equation*}
		[\hat{Y}_{k}^{(t, l)}]_{i\last, l}
		= [\hat{Y}_{k}^{(t, b)}]_{i\last, b}
		= \dfrac{\eta [Y_{k}]_{i\last, b} - [\nabla f (Y_{k})]_{i\last, b}}{\norm{[\hat{W}_{k}^{(t, b)}]_{:, b}}_2}
		\geq \dfrac{\tau}{2 (\eta + M)}
		\geq \omega,
	\end{equation*}
	which is in direct conflict with the fact that $[\hat{Y}_{k}^{(t, l)}]_{i\last, l} \in (0, \omega)$.
	This indicates that $l \neq b$.
	Moreover, if $[\hat{W}_{k}^{(t, b)}]_{:, l} = 0$, we know that $[\hat{W}_{k}^{(t, l)}]_{i, l} = [\hat{W}_{k}^{(t, b)}]_{i, l} = 0$ for all $i \neq i\last$.
	Then the closed-form expression \eqref{eq:sol-subp-cl} for $v = l$ implies that either $[\hat{Y}_{k}^{(t, l)}]_{i\last, l} = 0$ or $[\hat{Y}_{k}^{(t, l)}]_{i\last, l} = 1$, which also results in a contradiction.
	Hence, we can obtain that $[\hat{W}_{k}^{(t, b)}]_{:, l} \neq 0$.
	And it follows from Proposition \ref{prop:supp} that
	\begin{equation*}
		\bar{f}_{Y_{k}} (\hat{Y}_{k}^{(t, l)}) - \bar{f}_{Y_{k}} (\hat{Y}_{k}^{(t, b)})
		= \norm{ [\hat{W}_{k}^{(t, b)}]_{:, b} }_2
		+ \alpha_{k, b}^{(t, l)}
		+ \norm{ [\hat{W}_{k}^{(t, b)}]_{:, l} }_2
		- \norm{ [\hat{W}_{k}^{(t, l)}]_{:, l} }_2,
	\end{equation*}
	where
	\begin{equation*}
		\alpha_{k, b}^{(t, l)} =
		\left\{
		\begin{aligned}
			& - \norm{ [\hat{W}_{k}^{(t, l)}]_{:, b} }_2,
			&& \mbox{if } [\hat{W}_{k}^{(t, l)}]_{:, b} \neq 0, \\
			& \min \{ [\nabla f (Y_{k}) - \eta Y_{k}]_{i, b} \mid i \in \supp ([\hat{S}_{k}^{(t, l)}]_{:, b}) \},
			&& \mbox{otherwise}.
		\end{aligned}
		\right.
	\end{equation*}
	By invoking the triangle inequality, we arrive at
	\begin{equation*}
		\begin{aligned}
			\abs{ \norm{ [\hat{W}_{k}^{(t, b)}]_{:, l} }_2 - \norm{ [\hat{W}_{k}^{(t, l)}]_{:, l} }_2 }
			\leq {} & \norm{ [\hat{W}_{k}^{(t, b)}]_{:, l} - [\hat{W}_{k}^{(t, l)}]_{:, l} }_2
			= [\hat{W}_{k}^{(t, l)}]_{i\last, l} \\
			= {} & [\hat{Y}_{k}^{(t, l)}]_{i\last, l} \norm{ [\hat{W}_{k}^{(t, l)}]_{:, l} }_2
			< \omega (\eta + M).
		\end{aligned}
	\end{equation*}
	Collecting the above two relationships together yields that
	\begin{equation*}
		\bar{f}_{Y_{k}} (\hat{Y}_{k}^{(t, l)}) - \bar{f}_{Y_{k}} (\hat{Y}_{k}^{(t, b)})
		> \norm{ [\hat{W}_{k}^{(t, b)}]_{:, b} }_2 + \alpha_{k, b}^{(t, l)}  - \omega (\eta + M).
	\end{equation*}
	Now we investigate the following two cases.
	
	\paragraph{Case I:} $[\hat{W}_{k}^{(t, l)}]_{:, b} \neq 0$.
	Then it holds that $\alpha_{k, b}^{(t, l)} = - \| [\hat{W}_{k}^{(t, l)}]_{:, b} \|_2$.
	By simple calculations, we can obtain that
	\begin{equation*}
		\begin{aligned}
			\norm{ [\hat{W}_{k}^{(t, b)}]_{:, b} }_2
			- \norm{ [\hat{W}_{k}^{(t, l)}]_{:, b} }_2
			= {} & \dfrac{ \norm{ [\hat{W}_{k}^{(t, b)}]_{:, b} }_2^2 - \norm{ [\hat{W}_{k}^{(t, l)}]_{:, b} }_2^2 }{ \norm{ [\hat{W}_{k}^{(t, b)}]_{:, b} }_2 + \norm{ [\hat{W}_{k}^{(t, l)}]_{:, b} }_2 } \\
			\geq {} & \dfrac{1}{2 (\eta + M)} \dkh{ \eta [Y_{k}]_{i\last, b} - [\nabla f (Y_{k})]_{i\last, b} }^2
			\geq \dfrac{\tau^2}{8 (\eta + M)}.
		\end{aligned}
	\end{equation*}
	As a result, it can be readily verified that
	\begin{equation*}
		\bar{f}_{Y_{k}} (\hat{Y}_{k}^{(t, l)}) - \bar{f}_{Y_{k}} (\hat{Y}_{k}^{(t, b)})
		> \dfrac{\tau^2}{8 (\eta + M)} - \omega (\eta + M)
		\geq 0,
	\end{equation*}
	which stands in contradiction to the definition of $l = v^{(t)}$.
	
	\paragraph{Case II:} $[\hat{W}_{k}^{(t, l)}]_{:, b} = 0$.
	In this case, we have $\alpha_{k, b}^{(t, l)} \geq 0$.
	A straightforward verification reveals that
	\begin{equation*}
		\begin{aligned}
			\bar{f}_{Y_{k}} (\hat{Y}_{k}^{(t, l)}) - \bar{f}_{Y_{k}} (\hat{Y}_{k}^{(t, b)})
			> {} & [\hat{W}_{k}^{(t, b)}]_{i\last, b} - \omega (\eta + M) \\
			= {} & \eta [Y_{k}]_{i\last, b} - [\nabla f (Y_{k})]_{i\last, b} - \omega (\eta + M)
			\geq \dfrac{\tau}{2} - \omega (\eta + M)
			\geq 0.
		\end{aligned}
	\end{equation*}
	Similarly, the above relationship contradicts the definition of $l = v^{(t)}$.
	
	From the combination of the foregoing two cases, it follows that condition \eqref{eq:sc-zrow} is also satisfied when $\bK$ is finite.
	Therefore, we conclude that the accumulation point $X\last \in \Opnp$ is indeed a first-order stationary point of problem \eqref{opt:stplus}.
	The proof is completed.
\end{proof}

As elucidated in the proof of Theorem \ref{thm:convergence}, whenever a row contains a position with a negative Euclidean gradient, the update scheme for support sets inevitably assigns it a nonzero entry.
This guarantees that every zero row adheres to condition \eqref{eq:sc-zrow}.
Another stationarity condition \eqref{eq:sc-supp}, in turn, is enforced by solving subproblem \eqref{opt:subp-zrow}.
Collectively, these mechanisms ensure that our algorithm converges to a first-order stationary point of problem \eqref{opt:stplus}.

\subsection{Iteration Complexity}

Next, we are in the position to establish the iteration complexity of Algorithm \ref{alg:supps} to find an approximate first-order stationary point.

\begin{theorem}
	\label{thm:complexity}
	For any $\epsilon \in (0, 1)$, Algorithm \ref{alg:supps} will terminate at an $\epsilon$-approximate first-order stationary point of problem \eqref{opt:stplus} after at most $O (\epsilon^{-2})$ iterations.
\end{theorem}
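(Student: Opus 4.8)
The plan is to combine the sufficient-descent estimate of Lemma~\ref{le:des-f} with the stationarity-violation bounds of Proposition~\ref{prop:fosc} through a standard telescoping-and-pigeonhole argument. The only genuine point of care is that the zero-row bound \eqref{eq:zrow-yk} is available only in the branch where $\norm{Y_k - X_k}\ff < \theta$, so I must make sure the iterate I eventually single out lies in that branch. Throughout, the natural candidate for the approximate stationary point is the intermediate iterate $Y_k \in \Opnp$, since both estimates in Proposition~\ref{prop:fosc} are phrased in terms of $Y_k$.

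First I would telescope the descent inequality \eqref{eq:des-f} exactly as in the proof of Corollary~\ref{coro:dist-iter}: summing over $k = 0, \dots, K-1$ and invoking the uniform bounds $\underline{f} \le f \le \overline{f}$ on the compact manifold $\Onp$ gives
\[
\sum_{k=0}^{K-1}\left(\norm{Y_k - X_k}\fs + \norm{X_{k+1} - Y_k}\fs\right) \le \frac{2(\overline{f} - \underline{f})}{\eta - L}.
\]
Consequently, the smallest summand over $k \in \{0, \dots, K-1\}$ is at most $\tfrac{2(\overline{f}-\underline{f})}{(\eta-L)K}$; letting $k^\ast$ be an index attaining it, both $\norm{Y_{k^\ast} - X_{k^\ast}}\fs$ and $\norm{X_{k^\ast+1} - Y_{k^\ast}}\fs$ are then bounded by $\tfrac{2(\overline{f}-\underline{f})}{(\eta-L)K}$.

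Next I would verify that $Y_{k^\ast}$ fulfills \eqref{eq:afosp}. The support bound \eqref{eq:supp-yk} holds unconditionally, so $\abs{[\grad f(Y_{k^\ast})]_{i,j}} \le 2(\eta+L)\norm{Y_{k^\ast}-X_{k^\ast}}\ff$ on $\supp(Y_{k^\ast})$. For the zero-row part I must first guarantee $\norm{Y_{k^\ast} - X_{k^\ast}}\ff < \theta$, placing $k^\ast$ in the \texttt{Else} branch of Algorithm~\ref{alg:supps}; this holds once $K$ is large enough that $\tfrac{2(\overline{f}-\underline{f})}{(\eta-L)K} < \theta^2$, after which \eqref{eq:zrow-yk} yields $\max\{0, -[\nabla f(Y_{k^\ast})]_{i,j}\} \le (\eta+C)\norm{X_{k^\ast+1}-Y_{k^\ast}}\ff$ on $\zrow(Y_{k^\ast})$. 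Writing $M := \max\{2(\eta+L),\, \eta+C\}$, both violations are at most $M\bigl(\tfrac{2(\overline{f}-\underline{f})}{(\eta-L)K}\bigr)^{1/2}$, so imposing in addition $K \ge \tfrac{2M^2(\overline{f}-\underline{f})}{(\eta-L)\epsilon^2}$ forces each to be $\le \epsilon$. Taking $K$ to be the maximum of the two lower bounds on $K$ derived above produces an $\epsilon$-approximate first-order stationary point, and since the $\theta$-threshold contributes only an additive constant independent of $\epsilon$, this $K$ is $O(\epsilon^{-2})$.

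The step I expect to demand the most care is precisely the coupling between the pigeonhole choice of $k^\ast$ and the branch condition required by \eqref{eq:zrow-yk}: one must confirm that the minimizing index can indeed be assumed to satisfy $\norm{Y_{k^\ast}-X_{k^\ast}}\ff < \theta$. This is automatic because any iteration in the \texttt{If} branch has $\norm{Y_k - X_k}\ff \ge \theta$, hence a summand exceeding $\theta^2$, which disqualifies it from being the minimizer once $K$ surpasses the $\theta$-threshold. Everything else is bookkeeping with the $\epsilon$-independent constants $\overline{f}-\underline{f}$, $L$, $C$, $\eta$, $\theta$, and $M$, so the resulting iteration count is genuinely $O(\epsilon^{-2})$.
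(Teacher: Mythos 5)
Your argument is correct and follows essentially the same route as the paper's proof: telescoping Lemma~\ref{le:des-f}, picking the index minimizing $\norm{Y_k - X_k}\fs + \norm{X_{k+1} - Y_k}\fs$ by pigeonhole, and invoking Proposition~\ref{prop:fosc}, with the same care that the chosen index must satisfy $\norm{Y_{k}-X_{k}}\ff < \theta$ so that \eqref{eq:zrow-yk} applies (the paper encodes this via the $2\epsilon^2/\theta^2$ term inside the $\max$ defining $K_\epsilon$). The certified point and the resulting $O(\epsilon^{-2})$ bound coincide with the paper's.
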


\begin{proof}
	For any $\epsilon \in (0, 1)$, we define
	\begin{equation*}
		k_{\epsilon} = \min \hkh{ k\uast \;\middle|\; k\uast \in \argmin_{ k \in \{0, 1, \dotsc, K_{\epsilon}\} } \norm{Y_k - X_k}\ffs + \norm{X_{k + 1} - Y_k}\ffs },
	\end{equation*}
	where
	\begin{equation}
		\label{eq:kepsilon}
		K_{\epsilon} = \left\lceil \dfrac{2 (\overline{f} - \underline{f})}{\eta - L} \max \hkh{ \dfrac{2}{\theta^2}, \, \dfrac{4 (\eta + L)^2}{\epsilon^2}, \, \dfrac{(\eta + M)^2}{\epsilon^2} } \right\rceil.
	\end{equation}
	Now it follows from the relationship \eqref{eq:sum-norm} that
	\begin{equation*}
		\begin{aligned}
			\norm{ Y_{k_{\epsilon}} - X_{k_{\epsilon}} }\ffs + \norm{ X_{k_{\epsilon} + 1} - Y_{k_{\epsilon}} }\ffs
			\leq {} & \dfrac{1}{K_{\epsilon}} \sum_{k = 0}^{K_{\epsilon} - 1} \dkh{ \norm{Y_k - X_k}\ffs + \norm{X_{k + 1} - Y_k}\ffs } \\
			\leq {} & \dfrac{2 (\overline{f} - \underline{f})}{(\eta - L) K_{\epsilon}}
			\leq \min \hkh{ \dfrac{\theta^2}{2}, \, \dfrac{\epsilon^2}{4 (\eta + L)^2}, \, \dfrac{\epsilon^2}{(\eta + M)^2} }.
		\end{aligned}
	\end{equation*}
	As a direct consequence of Proposition \ref{prop:fosc}, we can proceed to show that
	\begin{equation*}
		\abs{[\grad\, f (Y_{k_{\epsilon}})]_{i, j}}
		\leq 2 (\eta + L) \norm{ Y_{k_{\epsilon}} - X_{k_{\epsilon}} }\ff
		\leq \epsilon,
	\end{equation*}
	for all $(i, j) \in \supp (Y_{k_{\epsilon}})$.
	Moreover, since $\| Y_{k_{\epsilon}} - X_{k_{\epsilon}} \|\ff < \theta$, it holds that
	\begin{equation*}
		\max\{0, - [\nabla f (Y_{k_{\epsilon}})]_{i, j}\}
		\leq (\eta + M) \norm{ X_{k_{\epsilon} + 1} - Y_{k_{\epsilon}} }\ff
		\leq \epsilon,
	\end{equation*}
	for all $i \in \zrow (Y_{k_{\epsilon}})$ and $j \in \{1, 2, \dotsc, p\}$.
	Therefore, we conclude that $Y_{k_{\epsilon}} \in \Opnp$ is an $\epsilon$-approximate first-order stationary point of problem \eqref{opt:stplus}, which can be obtained by Algorithm \ref{alg:supps} after at most $K_{\epsilon} = O (\epsilon^{-2})$ iterations.
	The proof is completed.
\end{proof}

Theorem \ref{thm:complexity} clarifies that the iteration complexity of Algorithm~\ref{alg:supps} is $K_{\epsilon} = O (\epsilon^{-2})$ to attain an $\epsilon$-approximate first-order stationary point.
This iteration complexity, to the best of our knowledge, represents the first such result for optimization problems with nonnegative and orthogonal constraints in the literature.
As revealed by the expression of $K_{\epsilon}$ in \eqref{eq:kepsilon}, the parameter $\theta$ enters only through a term independent of $\epsilon$. 
Consequently, for sufficiently small $\epsilon$, the iteration complexity of Algorithm~\ref{alg:supps} is unaffected by the choice of $\theta$.

\begin{remark}
	Theorem~\ref{thm:convergence} asserts that every accumulation point of $\{X_k\}$ is a first-order stationary point, whereas Theorem~\ref{thm:complexity} identifies the intermediate iterate $Y_{k_{\epsilon}}$, rather than $X_{k_{\epsilon}}$, as an $\epsilon$-approximate first-order stationary point. 
	These two conclusions are consistent. 
	Indeed, Corollary~\ref{coro:dist-iter} shows that $\norm{Y_k - X_k}\ff \to 0$ and $\norm{X_{k + 1} - Y_k}\ff \to 0$ as $k \to \infty$, thereby ensuring that $\{X_k\}$ and $\{Y_k\}$ share the same accumulation points. 
	Furthermore, Proposition~\ref{prop:fosc} quantifies the stationarity violation at $Y_k$ in terms of $\norm{Y_k - X_k}\ff$ and $\norm{X_{k + 1} - Y_k}\ff$. 
	Therefore, the proof of Theorem~\ref{thm:complexity} naturally selects $Y_{k_{\epsilon}}$ as the candidate for an $\epsilon$-approximate first-order stationary point. 
\end{remark}

\subsection{Finite Support Identification}

This subsection demonstrates that the support set of stationary points can be identified after a finite number of iterations.

\begin{theorem}
	\label{thm:support}
	Let $\{(X_k, Y_k)\}$ be the sequence generated by Algorithm \ref{alg:supps}.
	Then at least one of the following statements holds for each row $i \in \{1, 2, \dotsc, n\}$.
	\begin{enumerate}[(i)]
		
		\item There exists $\Bbbk_i \in \bN$ such that the position of the nonzero entry in $[X_k]_{i, :}$ remains unchanged for all $k \geq \Bbbk_i$.
		
		\item $\liminf_{k \to \infty} \norm{[X_k]_{i, :}}_2 = 0$.
		
	\end{enumerate}
\end{theorem}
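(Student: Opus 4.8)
The plan is to argue by dichotomy on each fixed row $i$. If $\liminf_{k \to \infty} \norm{[X_k]_{i, :}}_2 = 0$, then statement (ii) holds and there is nothing to prove, so I would assume the opposite: there exist $\mu > 0$ and $K_0 \in \bN$ with $\norm{[X_k]_{i, :}}_2 \geq \mu$ for all $k \geq K_0$, and aim to establish statement (i). The driving force is Corollary \ref{coro:dist-iter}, which gives $\norm{Y_k - X_k}\ff \to 0$ and $\norm{X_{k + 1} - Y_k}\ff \to 0$; I would fix $\Bbbk_i \geq K_0$ so that both quantities are strictly below $\mu / 2$ for every $k \geq \Bbbk_i$.

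Next I would freeze the position across the half-step $X_k \to Y_k$. For $k \geq \Bbbk_i$ the row $i$ is a nonzero row of $X_k$, so by \eqref{eq:sign-zrow} we have $[S_k]_{i, :} = \sign ([X_k]_{i, :})$, whose single nonzero sits in the same column $\pi_k$ as that of $X_k$. The support constraint in \eqref{opt:subp-zrow} then forces $\supp ([Y_k]_{i, :}) \subseteq \{\pi_k\}$, and since $\norm{[Y_k]_{i, :}}_2 \geq \norm{[X_k]_{i, :}}_2 - \norm{Y_k - X_k}\ff > \mu / 2 > 0$, the lone nonzero of $[Y_k]_{i, :}$ indeed lies at $\pi_k$, so that $[Y_k]_{i, \pi_k} = \norm{[Y_k]_{i, :}}_2$.

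The heart of the argument is the half-step $Y_k \to X_{k + 1}$. If the algorithm sets $X_{k + 1} = Y_k$, the position is trivially preserved. Otherwise I would suppose, for contradiction, that the nonzero entry of $[X_{k + 1}]_{i, :}$ moves to a column $\pi_{k + 1} \neq \pi_k$ (note $[X_{k + 1}]_{i, :}$ is nonzero because $\norm{[X_{k + 1}]_{i, :}}_2 \geq \mu$). Since each row carries at most one nonzero entry, $[Y_k]_{i, :}$ and $[X_{k + 1}]_{i, :}$ then have disjoint supports, whence
\begin{equation*}
	\norm{[X_{k + 1}]_{i, :} - [Y_k]_{i, :}}_2^2 = [Y_k]_{i, \pi_k}^2 + [X_{k + 1}]_{i, \pi_{k + 1}}^2 \geq \norm{[Y_k]_{i, :}}_2^2 > \dfrac{\mu^2}{4},
\end{equation*}
so that $\norm{X_{k + 1} - Y_k}\ff > \mu / 2$, contradicting $k \geq \Bbbk_i$. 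Hence $\pi_{k + 1} = \pi_k$ for all $k \geq \Bbbk_i$, which is precisely statement (i).

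I expect the main obstacle to be conceptual rather than computational: recognizing that relocating a single nonzero entry to a different column incurs a Frobenius-norm jump no smaller than the magnitude of the relocated entry, and that this is incompatible with the vanishing step sizes supplied by Corollary \ref{coro:dist-iter} once the row norm is bounded away from zero. The only bookkeeping care needed is to verify that the position is already frozen across $X_k \to Y_k$ (handled above via the support constraint), so that the jump estimate applies cleanly on $Y_k \to X_{k + 1}$, the sole half-step at which the update scheme for support sets can actually move a nonzero entry of a persistently nonzero row.
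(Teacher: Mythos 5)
Your proof is correct and rests on exactly the same two ingredients as the paper's own argument: the vanishing step sizes supplied by Corollary \ref{coro:dist-iter}, and the observation that relocating the single nonzero entry of a row to a different column forces a Frobenius jump of at least that row's norm (with the $X_k \to Y_k$ half-step frozen by the support constraint). The paper simply runs the same estimate in the contrapositive direction---assuming (i) fails along a subsequence and concluding (ii)---so the two proofs are essentially identical.
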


\begin{proof}
	We assume that statement $(i)$ does not hold for the $i$-th row.
	Then there exists a sequence $\{k_q\}$ satisfying $\lim_{q \to \infty} k_q = \infty$ such that the nonzero entries in $[X_{k_q}]_{i, :}$ and $[X_{k_q + 1}]_{i, :}$ occur at different positions.
	From the construction of Algorithm~\ref{alg:supps}, it follows that $[X_{k_q}]_{i, :}$ and $[Y_{k_q}]_{i, :}$ share the same position for the nonzero entry.
	Since both $[Y_{k_q}]_{i, :}$ and $[X_{k_q + 1}]_{i, :}$ contain at most one nonzero entry, we have
	\begin{equation*}
		\norm{[Y_{k_q}]_{i, :} - [X_{k_q + 1}]_{i, :}}_2
		\geq \norm{[Y_{k_q}]_{i, :}}_2,
	\end{equation*}
	which together with the triangle inequality implies that
	\begin{equation*}
		\begin{aligned}
			\norm{[X_{k_q}]_{i, :}}_2
			\leq {} & \norm{[X_{k_q}]_{i, :} - [Y_{k_q}]_{i, :}}_2 + \norm{[Y_{k_q}]_{i, :}}_2 \\
			\leq {} & \norm{[X_{k_q}]_{i, :} - [Y_{k_q}]_{i, :}}_2 + \norm{[Y_{k_q}]_{i, :} - [X_{k_q + 1}]_{i, :}}_2.
		\end{aligned}
	\end{equation*}
	According to~Corollary \ref{coro:dist-iter}, we can obtain that $\liminf_{k \to \infty} \norm{[X_k]_{i, :}}_2 = 0$ by passing to the limit $q \to \infty$ in the above relationship.
	Therefore, statement $(ii)$ always holds in this case.
	The proof is completed.
\end{proof}

The following result arises as an immediate consequence of Theorem~\ref{thm:support}, whose proof is omitted for brevity.

\begin{corollary}
	\label{coro:support}
	Let $\{(X_k, Y_k)\}$ be the sequence generated by Algorithm \ref{alg:supps}. 
	Suppose that every first-order stationary point of problem \eqref{opt:stplus} is free of zero rows. 
	Then $\supp (X_k)$ becomes fixed after a finite number of iterations. 
		%
		%
		%
\end{corollary}

We present two representative examples to illustrate that the circumstances described in Corollary~\ref{coro:support} are commonly encountered in practice. 
\begin{enumerate}[(i)]
	
	\item The first example is the linear function $f (X) = \tr (A\zz X)$, where each row of the matrix $A \in \Rnp$ contains at least one strictly negative entry.
	
	\item The second example is the quadratic function $f (X) = \tr (X\zz A X)$, where all entries of the symmetric matrix $A \in \Rnn$ are strictly negative.
	
\end{enumerate}
In both cases, it is straightforward to verify that, for any $X \in \Opnp$, each row of $\nabla f (X)$ necessarily contains at least one strictly negative entry.
Consequently, the stationarity condition \eqref{eq:sc-zrow} can never be satisfied in such settings, which in turn ensures that every first-order stationary point must be devoid of zero rows.

\subsection{Whole-Sequence Convergence Under the K{\L} Property}

Finally, we close this section by investigating the convergence behavior of Algorithm~\ref{alg:supps} under an additional K{\L} property. 
It is shown that the whole sequence generated by Algorithm~\ref{alg:supps} converges to a single first-order stationary point of problem~\eqref{opt:stplus}.

Let $\zeta_{\Opnp}: \Rnp \to (-\infty, +\infty]$ be the indicator function of $\Opnp$ and $\psi: \Rnp \to (-\infty, +\infty]$ be the extended objective function defined by
\begin{equation*}
	\psi (X) = f (X) + \zeta_{\Opnp} (X).
\end{equation*}
We first provide a normal-cone estimate that connects the limiting subdifferential of $\psi$, denoted by $\partial \psi$, with two stationarity residuals in Proposition~\ref{prop:kkt}.

\begin{lemma}
	\label{le:subdiff-bound}
	For any $Y \in \Opnp$, it holds that
	\begin{equation}
		\label{eq:subdiff-bound}
		\dist^2 (0, \partial \psi (Y))
		\leq \sum_{(i, j) \in \supp (Y)} \abs{[\grad\, f (Y)]_{i, j}}^2
		+ \sum_{i \in \zrow (Y)} \sum_{j = 1}^{p} \max \{0, - [\nabla f (Y)]_{i, j}\}^2.
	\end{equation}
\end{lemma}

\begin{proof}
	Let $D \in \Rnp$ be a matrix defined by
	\begin{equation*}
		[D]_{i, j} =
		\left\{
		\begin{aligned}
			& - \lambda_j [Y]_{i, j}, 
			&& \mbox{if } (i, j) \in \supp (Y), \\
			& - \max \{0, [\nabla f (Y)]_{i, j}\}, 
			&& \mbox{if } i \in \zrow (Y) \mbox{ and } j \in \{1, 2, \dotsc, p\}, \\
			& - [\nabla f (Y)]_{i, j}, 
			&& \mbox{otherwise},
		\end{aligned}
		\right.
	\end{equation*}
	where $\lambda_j = [Y\zz \nabla f (Y)]_{j, j}$ for $j \in \{1, 2, \dotsc, p\}$. 
	From the definition of $\cN_{\Opnp} (Y)$ given in \eqref{eq:normal}, it can be readily verified that $D \in \cN_{\Opnp} (Y)$. 
	Since $f$ is continuously differentiable, we have $\partial \psi (Y) = \nabla f (Y) + \cN_{\Opnp} (Y)$. 
	The construction of $D$ yields that
	\begin{equation*}
		[\nabla f (Y) + D]_{i, j} =
		\left\{
		\begin{aligned}
			& [\grad\, f (Y)]_{i, j}, 
			&& \mbox{if } (i, j) \in \supp (Y), \\
			& - \max \{0, - [\nabla f (Y)]_{i, j}\}, 
			&& \mbox{if } i \in \zrow (Y) \mbox{ and } j \in \{1, 2, \dotsc, p\}, \\
			& 0, 
			&& \mbox{otherwise}.
		\end{aligned}
		\right.
	\end{equation*}
	Then the estimate \eqref{eq:subdiff-bound} follows immediately, due to the fact that $\nabla f (Y) + D \in \partial \psi (Y)$. 
	We complete the proof. 
\end{proof}

With the preceding preparations in place, we are ready to establish the convergence of the entire sequence generated by Algorithm~\ref{alg:supps} when $f$ is a semi-algebraic function.

\begin{theorem}
	\label{thm:kl}
	Suppose that $f$ is semi-algebraic. 
	Then the whole sequence $\{X_k\}$ generated by Algorithm~\ref{alg:supps} converges to a first-order stationary point of problem~\eqref{opt:stplus}. 
\end{theorem}

\begin{proof}
	The feasible set $\Opnp$ is described by polynomial equalities and inequalities, which is semi-algebraic. 
	Since $f$ is semi-algebraic, the extend objective function $\psi$ is proper, lower semi-continuous, and semi-algebraic.
	Consequently, it follows from \cite[Theorem~3]{Bolte2014proximal} that $\psi$ possesses the K{\L} property. 
	
	We shall prove the assertion by establishing the finite-length property of the intermediate sequence $\{Y_k\}$. 
	The two descent relationships \eqref{eq:des-f-xk} and \eqref{eq:des-f-yk} imply that
	\begin{equation}
		\label{eq:des-yk-kl}
		\begin{aligned}
			\psi (Y_k) - \psi (Y_{k + 1})
			= {} & f (Y_k) - f (Y_{k + 1})
			= f (Y_k) - f (X_{k + 1}) + f (X_{k + 1}) - f (Y_{k + 1}) \\
			\geq {} & c_1 \norm{X_{k + 1} - Y_k}\ffs + c_1 \norm{Y_{k + 1} - X_{k + 1}}\ffs,
		\end{aligned}
	\end{equation}
	where $c_1 = (\eta - L) / 2 > 0$ is a constant. 
	We obtain that the sequence $\{\psi (Y_k)\}$ is monotonically nonincreasing. 
	The relationship \eqref{eq:bound-f} guarantees that $\{\psi (Y_k)\}$ is bounded from below and hence converges to some $f\last \in \bR$. 
	If there exists $\bar{k}_0$ such that $\psi (Y_{\bar{k}_0}) = f\last$, we can deduce from \eqref{eq:des-yk-kl} that $Y_{\bar{k}_0 + 1} = X_{\bar{k}_0 + 1} = Y_{\bar{k}_0}$. 
	Then a trivial induction shows that the assertion of this theorem is obvious. 
	Hence, we focus on the case where $\psi (Y_k) > f\last$ for all $k \in \bN$. 
	For any $\mu > 0$, there exists $\bar{k}_1 \in \bN$ such that $f\last < \psi (Y_k) < f\last + \mu$ with $k \geq \bar{k}_1$. 
	Let $\Omega$ denote the set containing all accumulation points of $\{Y_k\}$. 
	According to \cite[Lemma~5]{Bolte2014proximal}, the set $\Omega$ is nonempty and compact, $\psi$ is constantly equal to $f\last$ on $\Omega$, and $\dist (Y_k, \Omega) \to 0$ as $k \to \infty$. 
	Thus, for any $\kappa > 0$, there exists $\bar{k}_2 \in \bN$ such that $\dist (Y_k, \Omega) \leq \kappa$ with $k \geq \bar{k}_2$. 
	Summing up all these facts, we obtain that $Y_k$ belongs to the intersection of $\{Y \in \Opnp \mid f\last < \psi (Y) < f\last + \mu\}$ and $\{Y \in \Opnp \mid \dist (Y, \Omega) \leq \kappa\}$ for all $k \geq \max\{\bar{k}_1, \bar{k}_2\}$.

	By invoking the uniformized K{\L} property in \cite[Lemma~6]{Bolte2014proximal}, there exists a continuous and concave function $\chi \in \Phi_\mu$ such that
	\begin{equation}
		\label{eq:uniform-kl}
		\chi^{\prime} (\psi (Y_k) - f\last) \, \dist (0, \partial \psi (Y_k)) \geq 1,
	\end{equation}
	for all $k \geq \max\{\bar{k}_1, \bar{k}_2\}$. 
	The concavity of $\chi$, together with two inequalities \eqref{eq:des-yk-kl} and \eqref{eq:uniform-kl}, yields that
	\begin{equation}
		\label{eq:chi-dist}
		\begin{aligned}
			\chi (\psi (Y_k) - f\last) - \chi (\psi (Y_{k + 1}) - f\last)
			\geq {} & \chi^{\prime} (\psi (Y_k) - f\last) (\psi (Y_k) - \psi (Y_{k + 1})) \\
			\geq {} & \dfrac{c_1 \norm{X_{k + 1} - Y_k}\ffs + c_1 \norm{Y_{k + 1} - X_{k + 1}}\ffs}{\dist (0, \partial \psi (Y_k))}.
		\end{aligned}
	\end{equation}
	By Corollary~\ref{coro:dist-iter}, there exists $\bar{k}_3 \in \bN$ such that $\norm{Y_k - X_k}\ff < \theta$ for all $k \geq \bar{k}_3$. 
	Let $c_2 = 4 n (\eta + L)^2 > 0$ and $c_3 = p (n - p) (\eta + M)^2 > 0$ be two constants. 
	Then Lemma~\ref{le:subdiff-bound} and Proposition~\ref{prop:fosc} together imply that
	\begin{equation}
		\label{eq:rel-error-yk}
		\begin{aligned}
			\dist^2 (0, \partial \psi (Y_k))
			\leq {} & \sum_{(i, j) \in \supp (Y_k)} \abs{[\grad\, f (Y_k)]_{i, j}}^2
			+ \sum_{i \in \zrow (Y_k)} \sum_{j = 1}^{p} \max \{0, - [\nabla f (Y_k)]_{i, j}\}^2 \\
			\leq {} & c_2 \norm{Y_k - X_k}\ffs
			+ c_3 \norm{X_{k + 1} - Y_k}\ffs,
		\end{aligned}
	\end{equation}
	for all $k \geq \bar{k}_3$. 
	Combining \eqref{eq:chi-dist} with \eqref{eq:rel-error-yk}, we immediately arrive at
	\begin{equation}
		\label{eq:chi-rk}
		\chi (\psi (Y_k) - f\last) - \chi (\psi (Y_{k + 1}) - f\last)
		\geq \dfrac{c_1 \norm{X_{k + 1} - Y_k}\ffs + c_1 \norm{Y_{k + 1} - X_{k + 1}}\ffs}{\sqrt{c_2 \norm{Y_k - X_k}\ffs + c_3 \norm{X_{k + 1} - Y_k}\ffs}},
	\end{equation}
	for all $k \geq \bar{k} := \max\{\bar{k}_1, \bar{k}_2, \bar{k}_3\}$. 
	For notational simplicity, we denote
	\begin{equation*}
		s_k = \chi (\psi (Y_k) - f\last) - \chi (\psi (Y_{k + 1}) - f\last),
		\quad\mbox{and}\quad
		d_k = \norm{X_{k + 1} - Y_k}\ff + \norm{Y_{k + 1} - X_{k + 1}}\ff. 
	\end{equation*}
	It can be readily verified that
	\begin{equation}
		\label{eq:dk}
		d_k^2 \leq 2 \norm{X_{k + 1} - Y_k}\ffs + 2 \norm{Y_{k + 1} - X_{k + 1}}\ffs.
	\end{equation}
	As $d_k = \norm{X_{k + 1} - Y_k}\ff + \norm{Y_{k + 1} - X_{k + 1}}\ff \geq \norm{X_{k + 1} - Y_k}\ff$ and $d_{k - 1} = \norm{X_k - Y_{k - 1}}\ff + \norm{Y_k - X_k}\ff \geq \norm{Y_k - X_k}\ff$, we have
	\begin{equation}
		\label{eq:dk-1}
		\dkh{ d_k + d_{k - 1} }^2
		\geq \dkh{ \norm{X_{k + 1} - Y_k}\ff + \norm{Y_k - X_k}\ff }^2 
		\geq \norm{X_{k + 1} - Y_k}\ffs + \norm{Y_k - X_k}\ffs.
	\end{equation}
	Collecting three relationships \eqref{eq:chi-rk}, \eqref{eq:dk}, and \eqref{eq:dk-1} together directly yields that
	\begin{equation}
		\label{eq:rk-recursive}
		d_k^2 \leq c_4 s_k \dkh{d_k + d_{k - 1}}, 
	\end{equation}
	where $c_4 = 2 \sqrt{\max \{c_2, c_3\}} / c_1 > 0$ is a constant.

	A straightforward verification reveals that the elementary inequality $\sqrt{a b} \leq a / 4 + b$ holds for $a, b \geq 0$. 
	Then the relationship \eqref{eq:rk-recursive} further gives
	\begin{equation}
		\label{eq:rk-linear}
		d_k \leq \dfrac{1}{3} d_{k - 1} + \dfrac{4}{3} c_4 s_k.
	\end{equation}
	Summing \eqref{eq:rk-linear} over $k$ from $\bar{k}$ to $K$, we have
	\begin{equation*}
		\sum_{k = \bar{k}}^{K} d_k
		\leq \dfrac{1}{3} d_{\bar{k} - 1} + \dfrac{1}{3} \sum_{k = \bar{k}}^{K} d_k
		+ \dfrac{4}{3} c_4 \sum_{k = \bar{k}}^{K} s_k.
	\end{equation*}
	Since $\sum_{k = \bar{k}}^{K} s_k = \chi (\psi (Y_{\bar{k}}) - f\last) - \chi (\psi (Y_{K + 1}) - f\last) \leq \chi (\psi (Y_{\bar{k}}) - f\last)$, it follows that
	\begin{equation*}
		\sum_{k = \bar{k}}^{K} d_k \leq \dfrac{1}{2} d_{\bar{k} - 1} + 2 c_4 \chi (\psi (Y_{\bar{k}}) - f\last),
	\end{equation*}
	which, by passing to the limit $K \to \infty$, implies that
	\begin{equation*}
		\sum_{k = \bar{k}}^{\infty} \norm{Y_{k + 1} - Y_k}\ff
		\leq \sum_{k = \bar{k}}^{\infty} \dkh{ \norm{Y_{k + 1} - X_{k + 1}}\ff + \norm{X_{k + 1} - Y_k}\ff } 
		= \sum_{k = \bar{k}}^{\infty} d_k
		< \infty.
	\end{equation*}
	Therefore, $\{Y_k\}$ is a Cauchy sequence and converges to a point $X\last \in \Opnp$. 
	In view of Corollary~\ref{coro:dist-iter}, we also conclude that $\{X_k\}$ converges to $X\last$. 
	Finally, Theorem~\ref{thm:convergence} ensures that $X\last$ is a first-order stationary point of problem~\eqref{opt:stplus}. 
	The proof is completed. 
\end{proof}

Leveraging the K{\L} property, Theorem~\ref{thm:kl} strengthens the theoretical guarantee of Algorithm~\ref{alg:supps} by establishing the convergence of the entire sequence.

\section{Numerical Results}

\label{sec:numerical}

Preliminary numerical results are presented in this section to validate the effectiveness and efficiency of the proposed algorithm. 
All algorithms are implemented in MATLAB R2018b on a workstation with dual Intel Xeon Gold 6242R CPU processors (at $3.10$ GHz$\times 20 \times 2$) and $510$ GB of RAM under Ubuntu 20.04. 
Our codes are publicly available online\footnote{See \url{https://github.com/LeiWang-Opt/Support-Set}.}.

\subsection{Implementation Details}

\label{subsec:implementation}

In our algorithm, solving subproblem~\eqref{opt:subp-supp} can be interpreted as performing a projected gradient step with $1 / \eta$ serving as the associated stepsize, in which the point $Z - \nabla f (Z) / \eta$ is projected onto the set $\{X \in \Opnp \mid \supp (X) \subseteq \supp (S)\}$.
Drawing inspiration from \cite{Wang2022decentralized,Wen2013feasible}, we employ the Barzilai-Borwein (BB) stepsize scheme \cite{Barzilai1988two} to update $\eta$.
Specifically, at each iteration $k$, the parameter $\eta$ in both subproblem \eqref{opt:subp-zrow} and subproblem \eqref{opt:subp-cl} is adaptively selected as
\begin{equation*}
	\eta_k = \dfrac{\abs{\jkh{X_{k} - X_{k - 1}, \nabla f (X_{k}) - \nabla f (X_{k - 1})}}}{\norm{X_{k} - X_{k - 1}}\ffs}.
\end{equation*}
Our empirical findings suggest that adopting this stepsize scheme leads to a noticeable acceleration of the convergence rate in practice.
Similar strategies are likewise utilized in \cite{Jiang2023exact,Qian2024error}.

Two parameters, $\theta$ and $\delta$, are of particular importance in Algorithm~\ref{alg:supps}, which determines when the support set is switched and controls the number of nonzero entries adjusted during each transition, respectively. 
To assess their influence, we evaluate the behavior of Algorithm 1 under various parameter choices on the following nonnegative PCA \cite{Montanari2015non} problem,
\begin{equation}
	\label{opt:npca}
	\min_{X \in \Opnp} \hspace{2mm} - \dfrac{1}{2} \tr \dkh{X\zz A\zz A X},
\end{equation}
where $A \in \Rmn$ is a data matrix with $m \leq n$. 
This experiment sets $n = 5000$, $m = 500$, and $p = 50$. 
We randomly generate the matrix $A$ with its entries sampled from a normal distribution. 
The initial point is fixed as the matrix formed by the first $p$ columns of $I_n$. 
Algorithm~\ref{alg:supps} is terminated when $\norm{X_{k + 1} - X_{k}}\ff \leq 10^{-6}$. 
Figure~\ref{fig:para} illustrates, in its two subplots, the decrease of function values over CPU time for $\theta \in \{10^{-1}, 10^{-2}, 10^{-3}\}$ and $\delta \in \{0.05, 0.1, 0.15\}$, respectively. 
As evidenced therein, Algorithm~\ref{alg:supps} demonstrates a remarkable degree of stability across different parameter choices, with its performance exhibiting no pronounced sensitivity to the specific values selected. 
Accordingly, Algorithm~\ref{alg:supps} is configured with $\theta = 10^{-2}$ and $\delta = 0.1$ by default.

\begin{figure}[t]
	\centering
	\subfigure[Different $\theta$]{
		\label{subfig:para_theta}
		\includegraphics[width=0.32\linewidth]{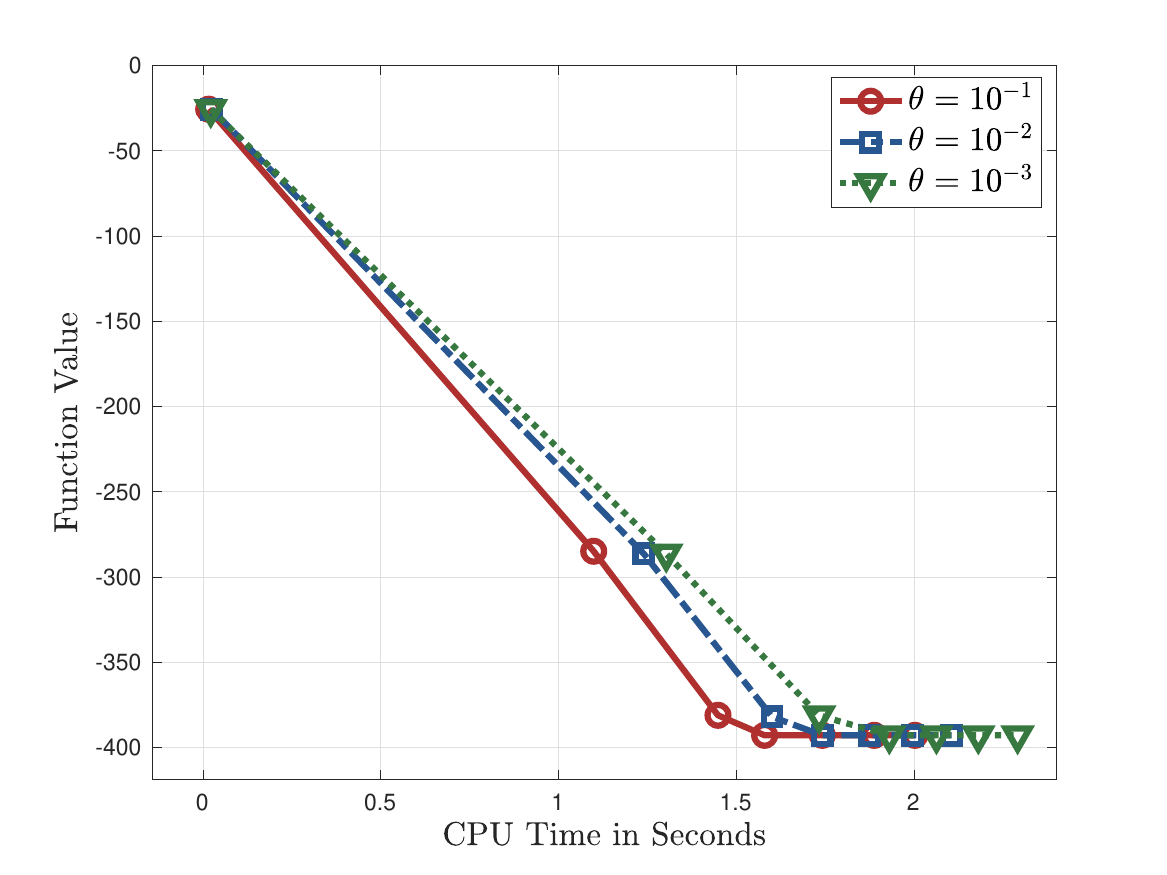}
	}
	\subfigure[Different $\delta$]{
		\label{subfig:para_delta}
		\includegraphics[width=0.32\linewidth]{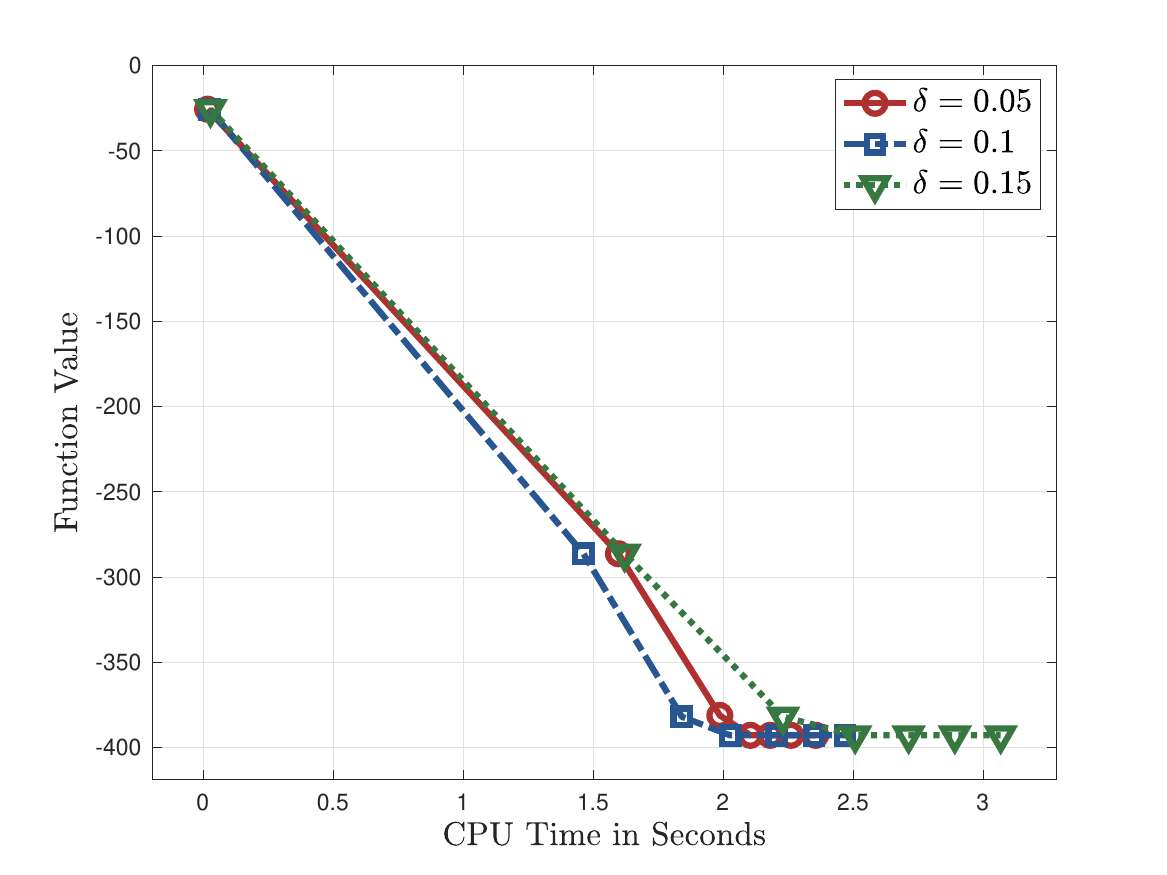}
	}
	\caption{Numerical comparison of different algorithmic parameters for \SUPPORT.}
	\label{fig:para}
\end{figure}

In the subsequent subsections, we conduct a comprehensive performance comparison between the proposed algorithm \SUPPORT and four existing methods---\EPOrth~\cite{Jiang2023exact}, \SEPPGz~\cite{Qian2024error}, \SEPPGp~\cite{Qian2024error}, and \OptM~\cite{Wen2013feasible}---on a variety of testing problems. 
The first three benchmark algorithms have been introduced in Section~\ref{subsec:prior}. 
For our experiments, we select the augmented Lagrangian method available in \OptM, which is described thoroughly in \cite[Section~5.2]{Qian2024error}. 
The implementations of these methods are sourced from GitHub\footnote{See \url{https://github.com/mengxianglgal/Ep4orth} for \EPOrth, \url{https://github.com/styluck/dSNCG} for \SEPPGz and \SEPPGp, and \url{https://github.com/optsuite/OptM} for \OptM.}.
We retain the original settings and configure the parameters in accordance with the specifications provided in \cite{Jiang2023exact,Qian2024error,Wen2013feasible}.
\EPOrth chooses $V = \mathbf{1}_p / \sqrt{p}$ in \eqref{eq:oblique}, where $\mathbf{1}_p$ is the $p$-dimensional vector of all ones. 
The stopping criterion for \EPOrth is set as $| 1 - \| X_k V \|\ffs | \leq 10^{-6}$. 
The remaining three benchmark algorithms adopt $\norm{ \max \{0, - X_k\} }_1 \leq 10^{-6}$ as the stopping criterion. 
Moreover, all algorithms are terminated once the iteration count reaches the prescribed maximum of $1000$.
Given that these algorithms are infeasible by design, they are generally incapable of producing a feasible solution within a finite number of iterations, which fundamentally distinguishes them from \SUPPORT. 
For a fair and meaningful comparison, the final iterates they return are rounded by \cite[Procedure 1]{Jiang2023exact} to generate feasible solutions.

\subsection{Nonnegative PCA}

We first engage in a numerical comparison of different algorithms on the nonnegative PCA problem \eqref{opt:npca} with the data matrix formulated by the singular value decomposition $A = U \Sigma V\zz$. 
Here, $U \in \cO^{m, m}$ is an orthonormalization of a randomly generated matrix, and $\Sigma \in \bR^{m \times m}$ is a diagonal matrix with randomly sampled positive entries arranged in descending order along the diagonal.
The orthogonal matrix $V \in \cO^{n, m}$ is constructed through a more elaborate procedure.
Specifically, we begin by generating a nonnegative and orthogonal matrix $X_{\opt} \in \Opnp$, which can be achieved by randomly selecting its support set and normalizing each column to have unit norm.
An orthogonal completion $\bar{V} \in \cO^{n, m - p}$ is then computed such that the concatenated matrix $V = [X_{\opt} \; \bar{V}]$ remains orthogonal.
By this design, $X_{\opt} \in \Opnp$ constitutes a global minimizer of problem~\eqref{opt:npca}, as its columns align with the eigenvectors of $A\zz A$ associated with the largest $p$ eigenvalues.
Consequently, the optimal value $f_{\opt}$ of problem \eqref{opt:npca} can be determined from $X_{\opt}$.

Three performance metrics are collected and recorded in our experiments.
The first one is the distance between the point $X_{\alg}$ returned by the algorithm and the global minimizer $X_{\opt}$.
It is noteworthy that, the global minimizer of problem \eqref{opt:npca} is not unique, since $X_{\opt} Q$ also qualifies as a global minimizer if $Q \in \Opp$ and $X_{\opt} Q \in \Opnp$.
Indeed, every permutation matrix naturally complies with these two requirements.
To mitigate this effect, we use the subspace distance \cite{Wang2023communication,Wang2024seeking} to measure the discrepancy as $\dist (X_{\alg}, X_{\opt}) = \| X_{\alg} X_{\alg}\zz - X_{\opt} X_{\opt}\zz \|\ff$.
The second one is the relative gap $(f_{\alg} - f_{\opt}) / (1 + \abs{f_{\opt}})$ between the final function value $f_{\alg}$ achieved by the algorithm and the optimal value $f_{\opt}$.
The third one is the CPU time required by the algorithm.

For our testing, we fix $n = 1000$ and $m = 600$ in problem \eqref{opt:npca}, while varying $p$ across the values in $\{100, 200, 300, 400, 500, 600\}$. 
All algorithms start from the same initial point $X_{\init} \in \Opnp$, which is randomly generated using the same procedure as for $X_{\opt}$.
Figure~\ref{fig:npca} depicts the numerical performances of the tested algorithms, with the three subplots corresponding to the three metrics described earlier.
It is evident that existing algorithms fail to identify the global minimizer of problem \eqref{opt:npca} in some test instances, which is also corroborated by the relative gaps of the achieved function values.
In terms of CPU time, \SUPPORT significantly outperforms the other four existing algorithms.
When $p = 600$, \OptM takes around $200$ seconds while \SUPPORT requires only about $7$ second, resulting in a speedup of nearly $30$ times.
This computational superiority will become even more pronounced if the value of $p$ increases further.

\begin{figure}[t]
	\centering
	\subfigure[CPU Time in Seconds]{
		\label{subfig:npca_time}
		\includegraphics[width=0.3\linewidth]{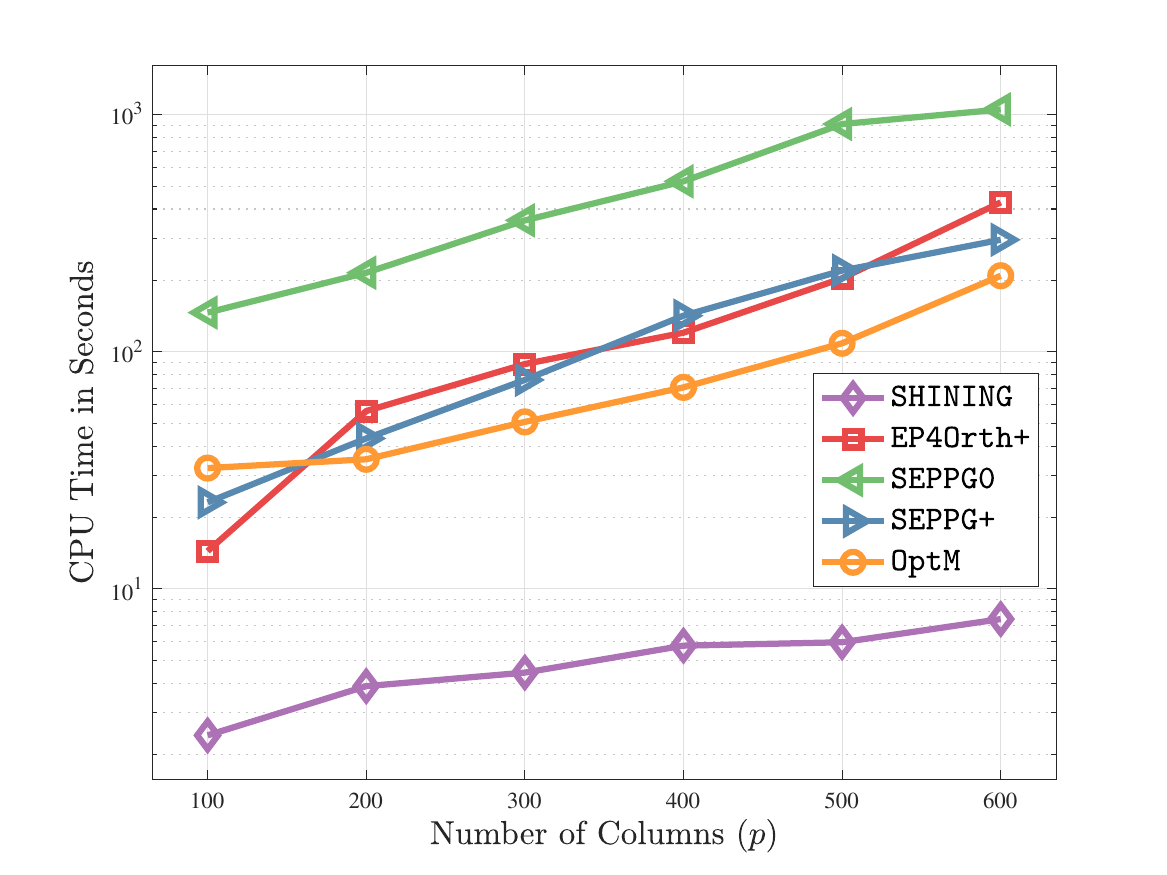}
	}
	\subfigure[Distance from Global Minimizers]{
		\label{subfig:npca_dist}
		\includegraphics[width=0.3\linewidth]{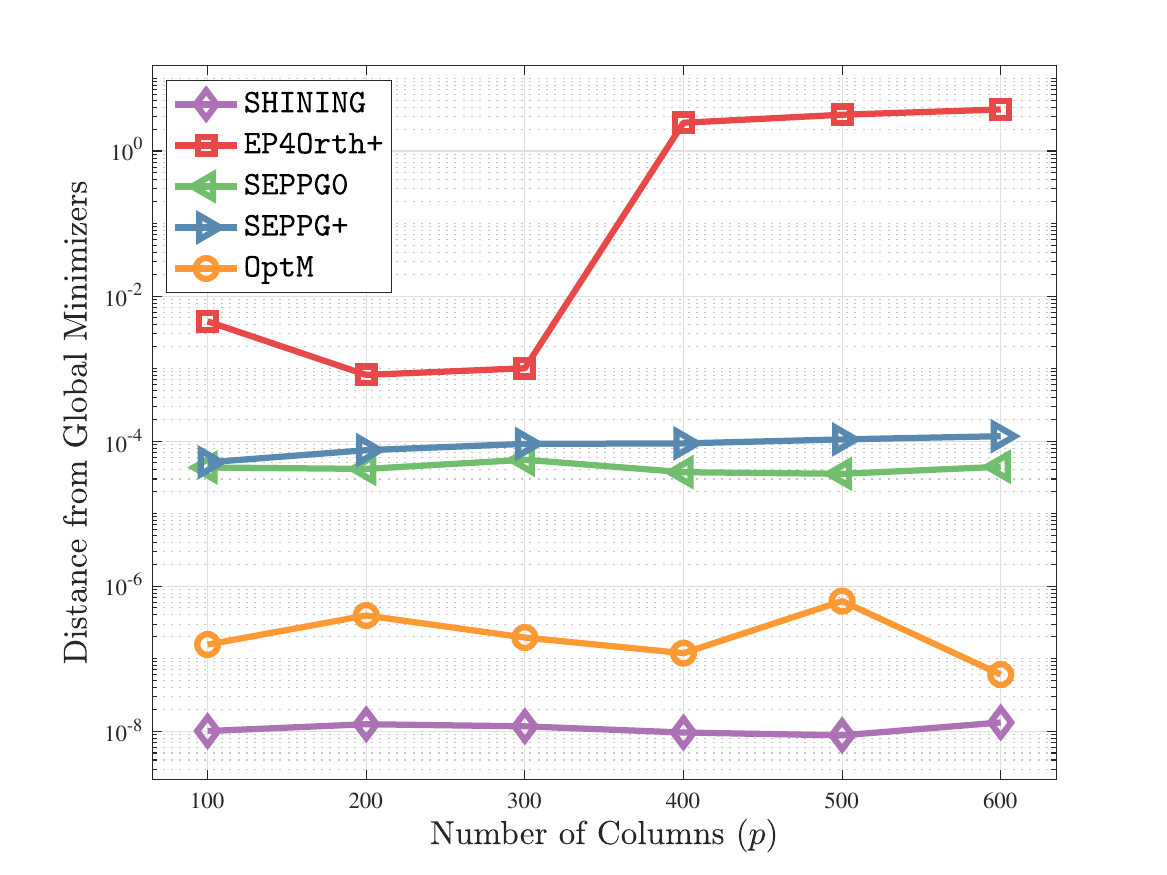}
	}
	\subfigure[Relative Gap of Function Values]{
		\label{subfig:npca_fval}
		\includegraphics[width=0.3\linewidth]{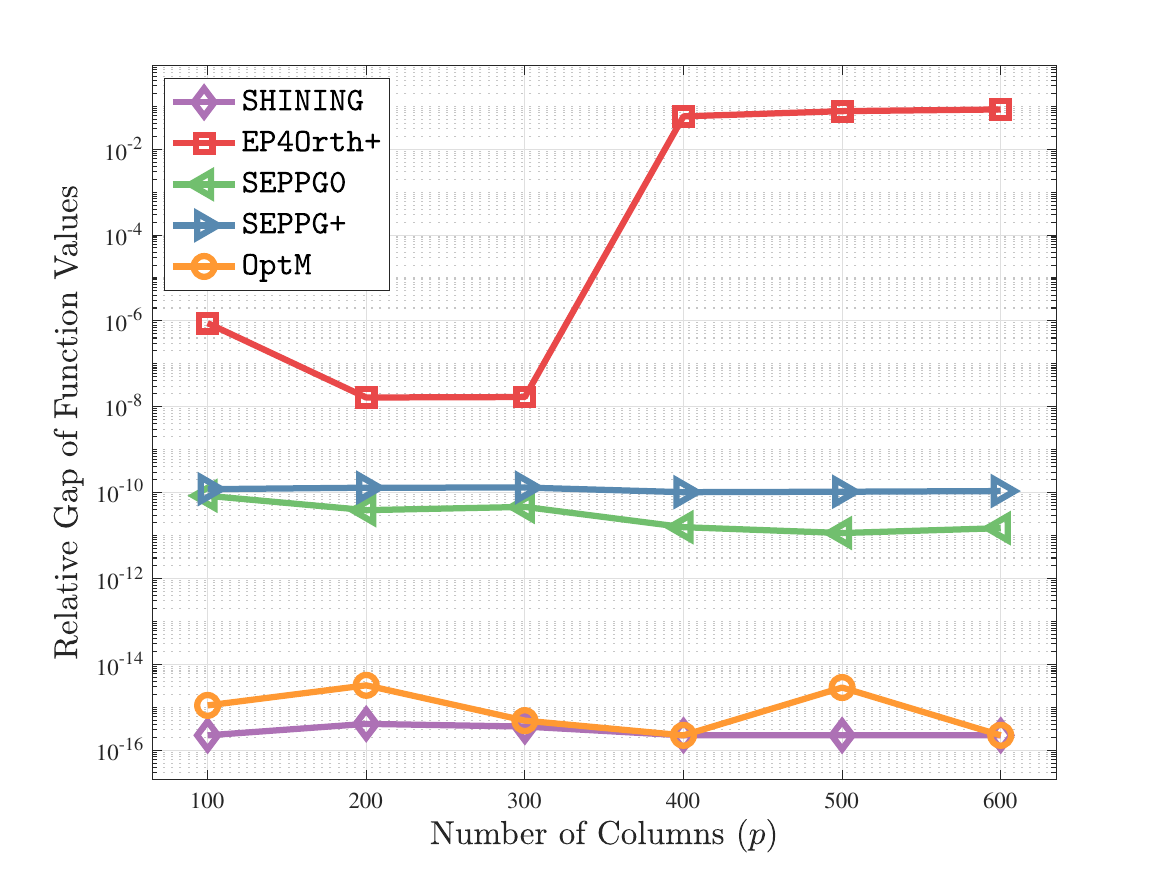}
	}
	\caption{Numerical comparison of different algorithms for solving nonnegative PCA problems.}
	\label{fig:npca}
\end{figure}

\subsection{Image and Text Clustering}

The next series of experiments performs the clustering analysis over real-world datasets by solving the orthogonal nonnegative matrix factorization \cite{Yang2010linear} model formulated as follows,
\begin{equation}
	\label{opt:onmf}
	\min_{X \in \Opnp} \hspace{2mm} \dfrac{1}{2} \norm{A - X X\zz A}\ffs,
\end{equation}
where $A \in \Rnm$ is a data matrix.
The purpose of this problem is to partition $n$ data points, each represented as an $m$-dimensional vector, into $p$ clusters.
We evaluate the performance of the tested algorithms on a collection of  image and text datasets adopted from \cite{Cai2008modeling}, which are publicly available online\footnote{See \url{http://www.cad.zju.edu.cn/home/dengcai/Data/data.html}.}.
For image datasets, each data point is a vector capturing the grayscale values of pixels in a picture.
While for text datasets, every document is encoded as a vector, which reflects the frequency of each word in an article.
The details of the datasets used in this study are summarized in Table \ref{tb:onmf}, which are preprocessed following the same procedure described in \cite[Section 5.2.1]{Jiang2023exact}.
We generate the initial point for all algorithms based on the eigenvectors of $A A\zz$ associated with the largest $p$ eigenvalues.

\begin{table}[t]
	\centering
	\begin{minipage}{\textwidth}
		\caption{Description of datasets for clustering (D1: Yale, D2: TDT2-l10, D3: TDT2-l20, D4: TDT2-t10, D5: TDT2-t20, D6: Reuters-t10, D7: Reuters-t20, D8: NewsG-t5).}
		\label{tb:onmf}
		\begin{tabular*}{\textwidth}{@{\hspace{5pt}\extracolsep{\fill}}ccccccccc@{\hspace{5pt}}}
			\toprule
			Dataset & D1 & D2 & D3 & D4 & D5 & D6 & D7 & D8 \\
			\midrule
			$n$ & $165$ & $653$ & $1938$ & $1477$ & $1721$ & $1897$ & $2402$ & $2344$ \\
			$m$ & $1024$ & $13684$ & $20845$ & $22181$ & $23674$ & $12444$ & $13568$ & $14475$ \\
			$p$ & $15$ & $10$ & $20$ & $10$ & $20$ & $10$ & $20$ & $5$ \\
			\bottomrule
		\end{tabular*}
	\end{minipage}
\end{table}

Any point $X \in \Opnp$ indicates a clustering assignment of a dataset.
To assess the quality of clustering results, we adopt three widely used criteria: entropy \cite{Zhao2004empirical}, purity \cite{Ding2006orthogonal}, and normalized mutual information (NMI) \cite{Xu2003document}.
Suppose that $\cC = \{\cC_i\}_{i = 1}^{p}$ is the clustering result produced by a tested algorithm with $\cC_i$ being the set of data points assigned to the $i$-th cluster.
The ground-truth clustering is represented by $\cC\uast = \{\cC_i\uast\}_{i = 1}^{p}$ with each $\cC_i\uast$ defined in the same manner.
Let $n_i = \#|\cC_i|$, $n_j\uast = \#|\cC_j\uast|$, and $n_{i, j} = \#|\cC_i \cap \cC_j\uast|$, where $\#|\,\cdot\,|$ denotes the cardinality of a set.
Then entropy, purity, and NMI are computed as
\begin{equation*}
	\left\{
	\begin{aligned}
		& \mathrm{Entropy} = - \dfrac{1}{n \log_2 p} \sum_{i = 1}^{p} \sum_{j = 1}^{p} n_{i, j} \log_2 \dfrac{n_{i, j}}{n_j\uast}, \quad
		\mathrm{Purity} = \dfrac{1}{n} \sum_{j = 1}^{p} \max_{i = 1, \dotsc, p} \hkh{ n_{i, j} }, \\
		& \mathrm{NMI} = \dfrac{1}{\max \{h (\cC), h (\cC\uast)\}} \sum_{i = 1}^{p} \sum_{j = 1}^{p} \dfrac{n_{i, j}}{n} \log_2 \dfrac{n n_{i, j}}{n_i n_j\uast},
	\end{aligned}
	\right.
\end{equation*}
respectively.
Here, $h (\cC) = - \sum_{i = 1}^{p} (n_i / n) \log_2 (n_i / n)$ and $h (\cC\uast)$ is defined analogously.
Broadly speaking, a clustering assignment is considered more favorable when it yields lower value of entropy and higher values of purity and NMI.

Figure \ref{fig:onmf} illustrates the clustering results of the tested algorithms on image and text datasets, including CPU time and three criteria. 
We use the results of \SUPPORT as the reference baseline in the last three subplots, and accordingly report all quantities after subtracting the corresponding values produced by \SUPPORT.
As shown, all algorithms deliver comparable results in terms of entropy, purity, and NMI, which suggests that they achieve similar clustering qualities.
Nevertheless, the proposed algorithm \SUPPORT stands out in computational efficiency, reducing the CPU time by more than an order of magnitude.
These numerical findings provide evidence that, the superior performance of \SUPPORT is not confined to simulated cases, but also extends to practical applications.

\begin{figure}[t]
	\centering
	\subfigure[CPU Time in Seconds]{
		\label{subfig:onmf_time}
		\includegraphics[width=0.45\linewidth]{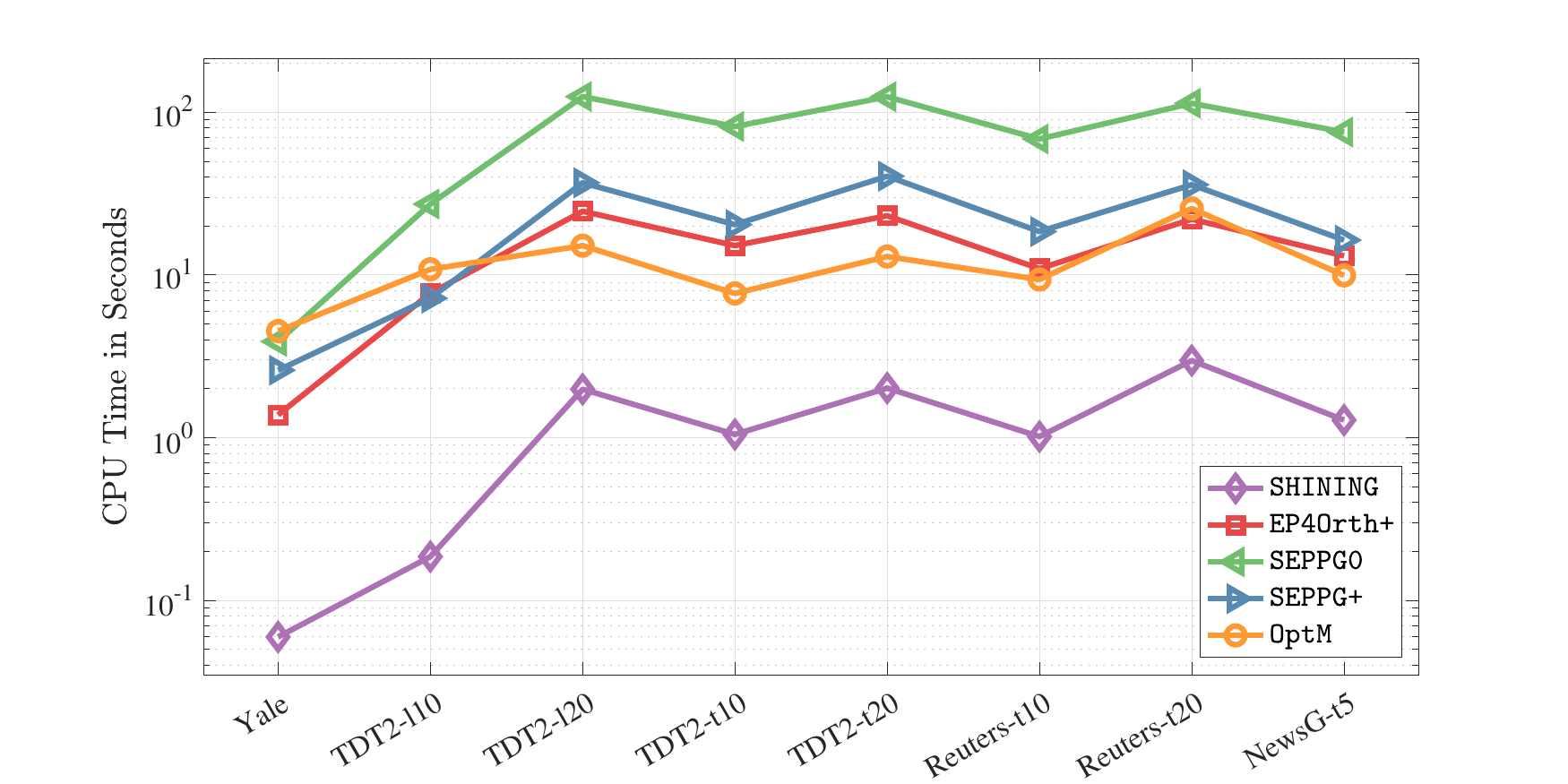}
	}
	\subfigure[Entropy]{
		\label{subfig:onmf_entr}
		\includegraphics[width=0.45\linewidth]{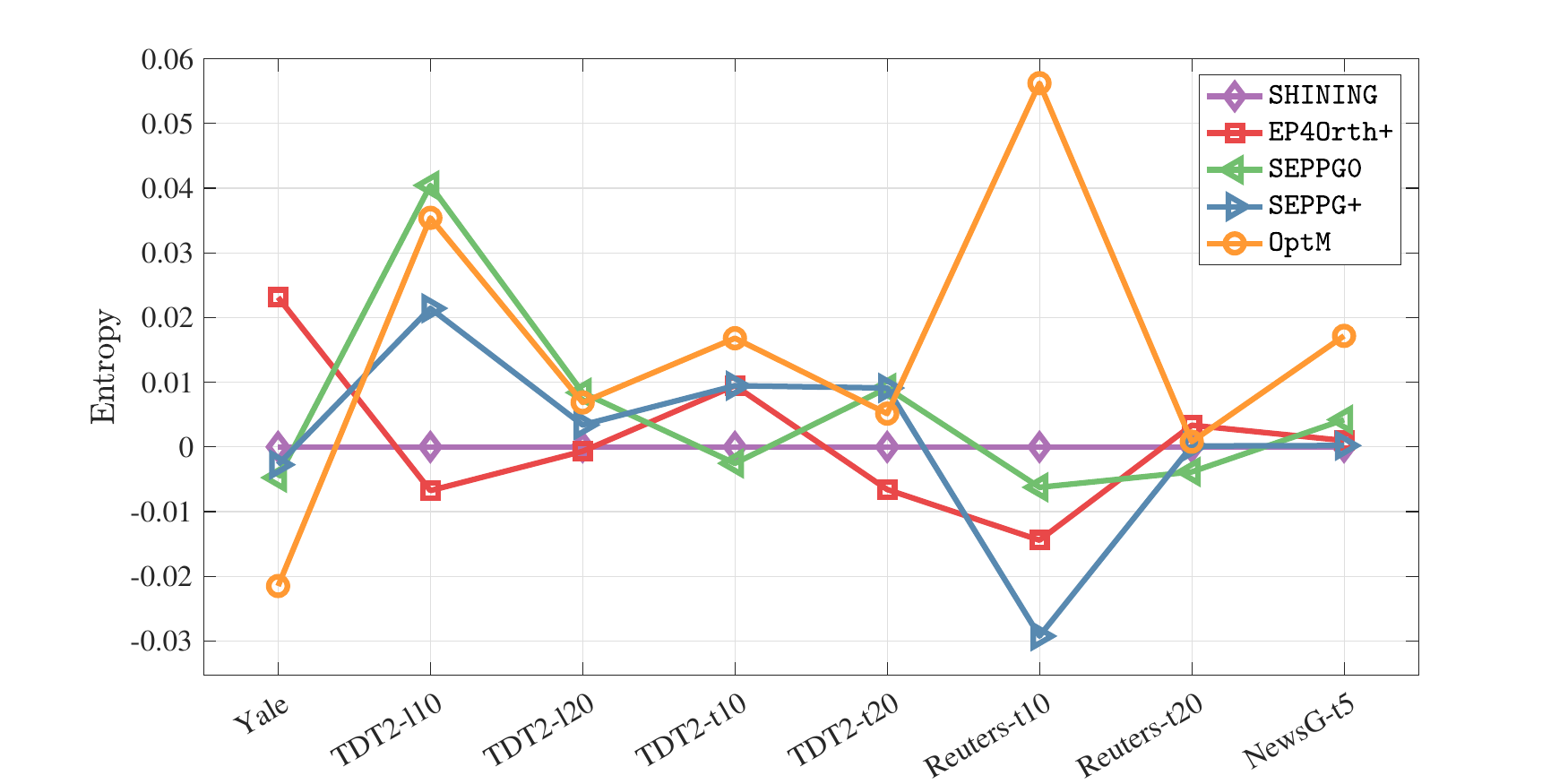}
	}
	
	\subfigure[Purity]{
		\label{subfig:onmf_puri}
		\includegraphics[width=0.45\linewidth]{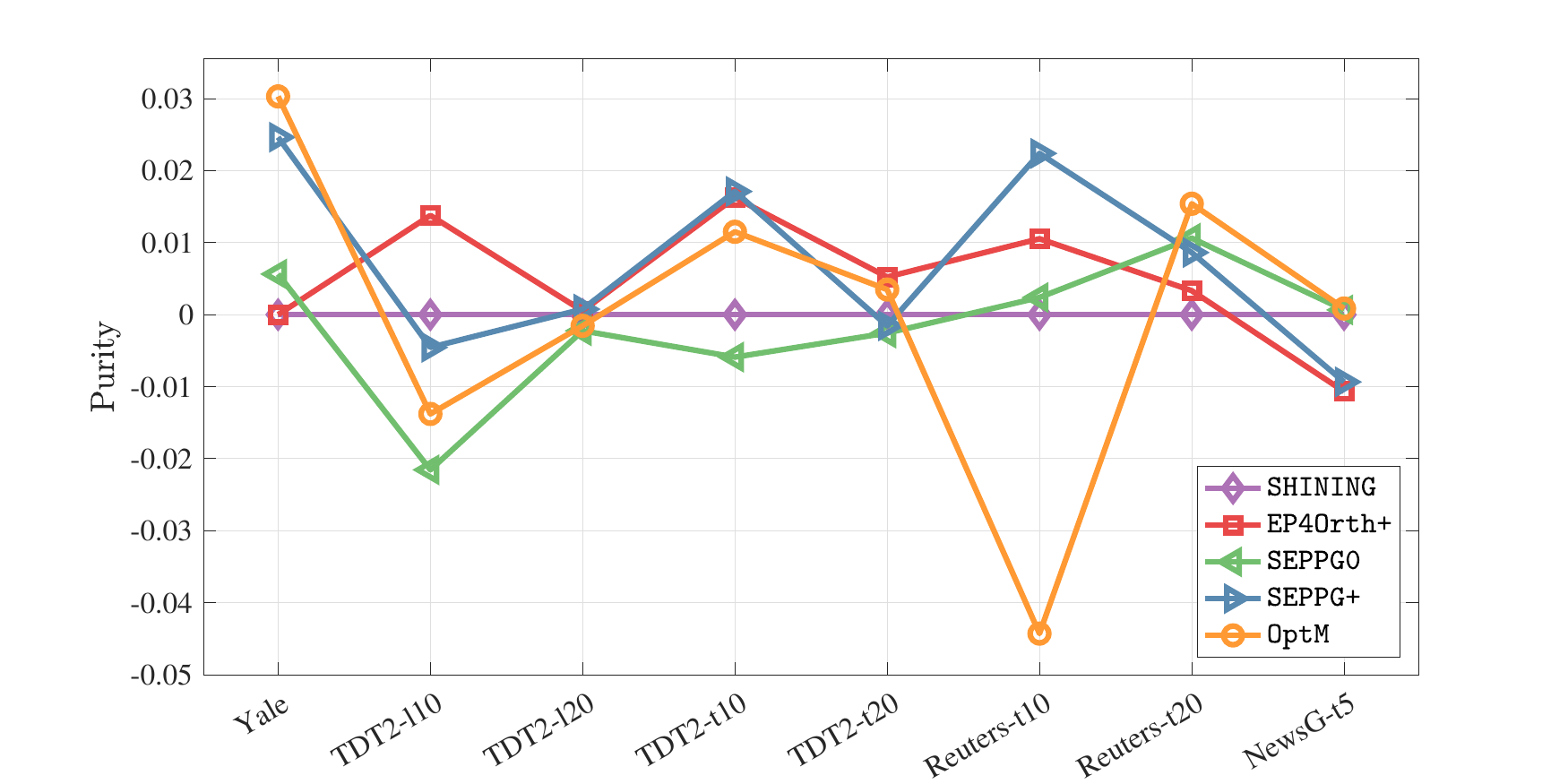}
	}
	\subfigure[NMI]{
		\label{subfig:onmf_nmif}
		\includegraphics[width=0.45\linewidth]{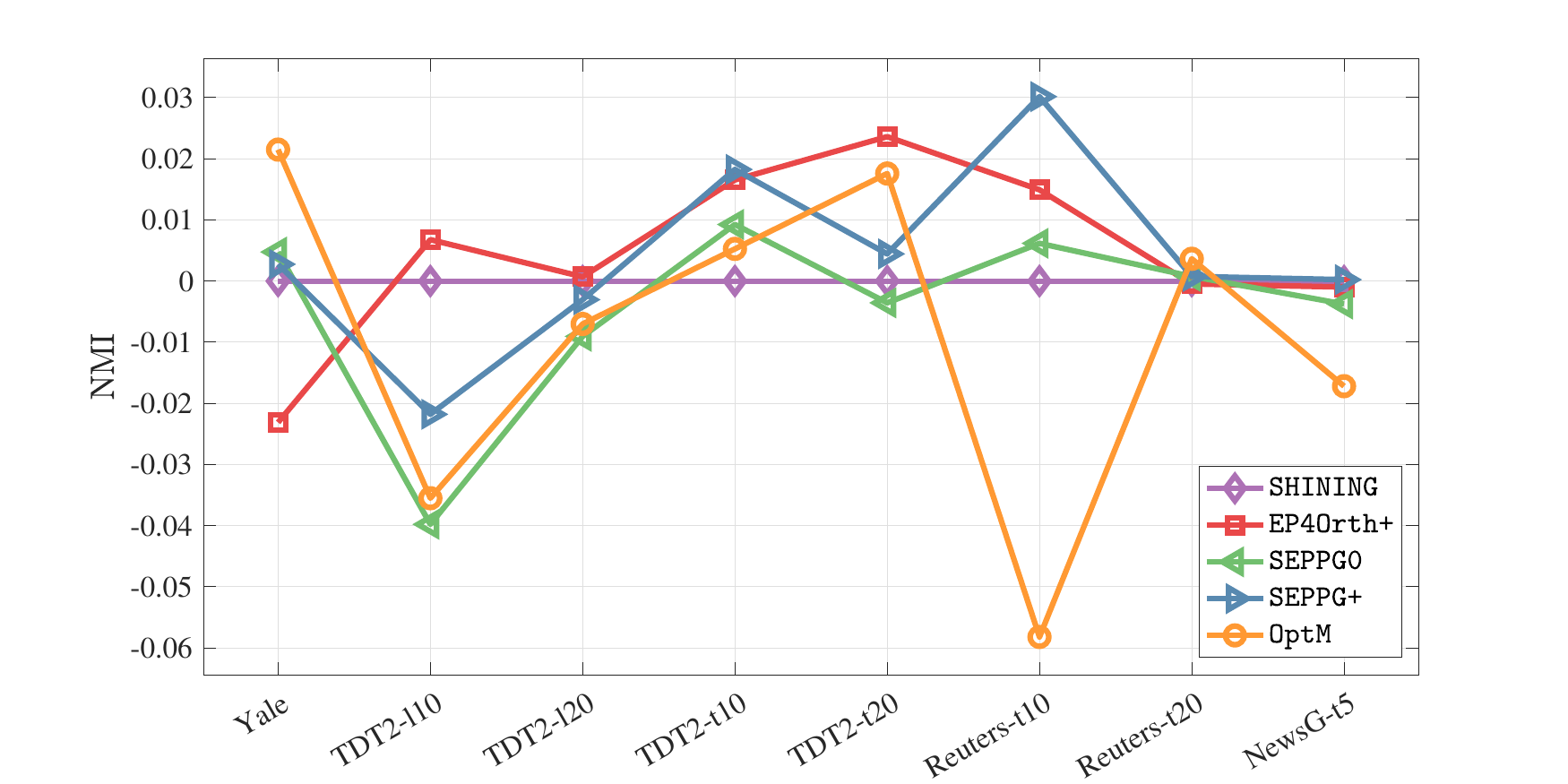}
	}
	\caption{Numerical comparison of different algorithms for clustering.}
	\label{fig:onmf}
\end{figure}

\subsection{Community Detection}

In the final stage of numerical experiments, we investigate the performance of various algorithms on the problem of community detection, a fundamental task in network science with far-reaching implications for machine learning and data analysis.
The goal is to divide a given network, consisting of $n$ vertices and $m$ edges, into $p$ groups in which the connections within each group are markedly denser than those across groups.
Recently, Paul and Chen \cite{Paul2025orthogonal} propose to tackle this problem by solving the optimization model below,
\begin{equation}
	\label{opt:cd}
	\min_{X \in \Opnp} \hspace{2mm} - \dfrac{1}{4} \norm{X\zz A X}\ffs,
\end{equation}
where $A \in \Rnn$, constructed from the adjacency matrix and the normalized Laplacian, encapsulates the structural information of the underlying network.

We select six real-world datasets from GitHub\footnote{See \url{https://github.com/PanShi2016/Community_Detection}.} for our experiments, with a detailed description of each dataset provided in Table \ref{tb:cd}.
The initial points for the algorithms under test are generated based on the eigenvectors of $A$ corresponding to the largest $p$ eigenvalues.
Let $d_i$ and $d_i\uast$ be the predicted group by an algorithm and the ground-truth group of the $i$-th vertex, respectively.
To evaluate the quality of detection outcomes, we employ the accuracy \cite{Xu2003document} as a quantitative metric as follows,
\begin{equation*}
	\mathrm{Accuracy} = \dfrac{1}{n} \sum_{i = 1}^{n} \chi (d_i\uast, \map (d_i)),
\end{equation*}
where $\chi (a, b)$ denotes the indicator function, taking the value $1$ if $a = b$ and $0$ otherwise, and $\map (\cdot)$ represents the permutation mapping function \cite{Xu2003document}.
Clearly, a higher value of accuracy signifies a better detection performance.

\begin{table}[t]
	\centering
	\begin{minipage}{\textwidth}
		\caption{Description of datasets for community detection.}
		\label{tb:cd}
		\begin{tabular*}{\textwidth}{@{\hspace{5pt}\extracolsep{\fill}}ccccccc@{\hspace{5pt}}}
			\toprule
			Dataset & TerrorAttack & CiteSeer & Cora & Email & PubMed & BlogCatalog \\
			\midrule
			$n$ & $1293$ & $3312$ & $2708$ & $1005$ & $19717$ & $10312$ \\
			$m$ & $6344$ & $9072$ & $10556$ & $32128$ & $88648$ & $667966$ \\
			$p$ & $6$ & $6$ & $7$ & $42$ & $3$ & $39$ \\
			\bottomrule
		\end{tabular*}
	\end{minipage}
\end{table}

The numerical results of our testing, presented in Figure \ref{fig:cd}, report both CPU time and accuracy. 
To enhance visual clarity, the values of accuracy are displayed relative to \SUPPORT by subtracting its corresponding values throughout. 
It can be observed that the detection results of the tested algorithms attain roughly comparable accuracy. 
Furthermore, \SUPPORT continues to exhibit a substantial computational advantage, requiring less than one-tenth of the CPU time compared with existing methods. 
This remarkable improvement highlights the practical effectiveness of \SUPPORT in dealing with complex datasets.

\begin{figure}[t]
	\centering
	\subfigure[CPU Time in Seconds]{
		\label{subfig:cd_time}
		\includegraphics[width=0.45\linewidth]{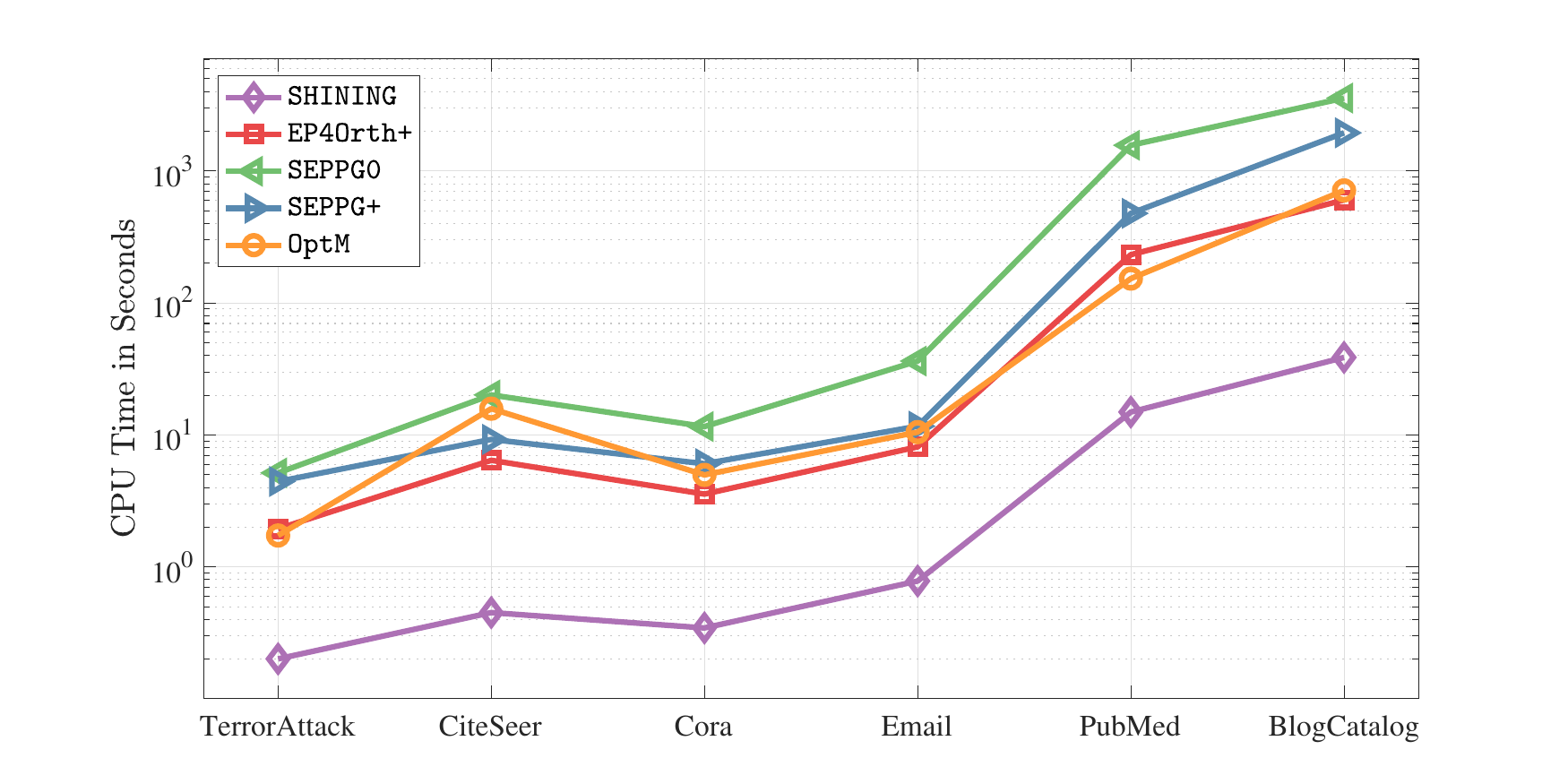}
	}
	\subfigure[Accuracy]{
		\label{subfig:cd_acc}
		\includegraphics[width=0.45\linewidth]{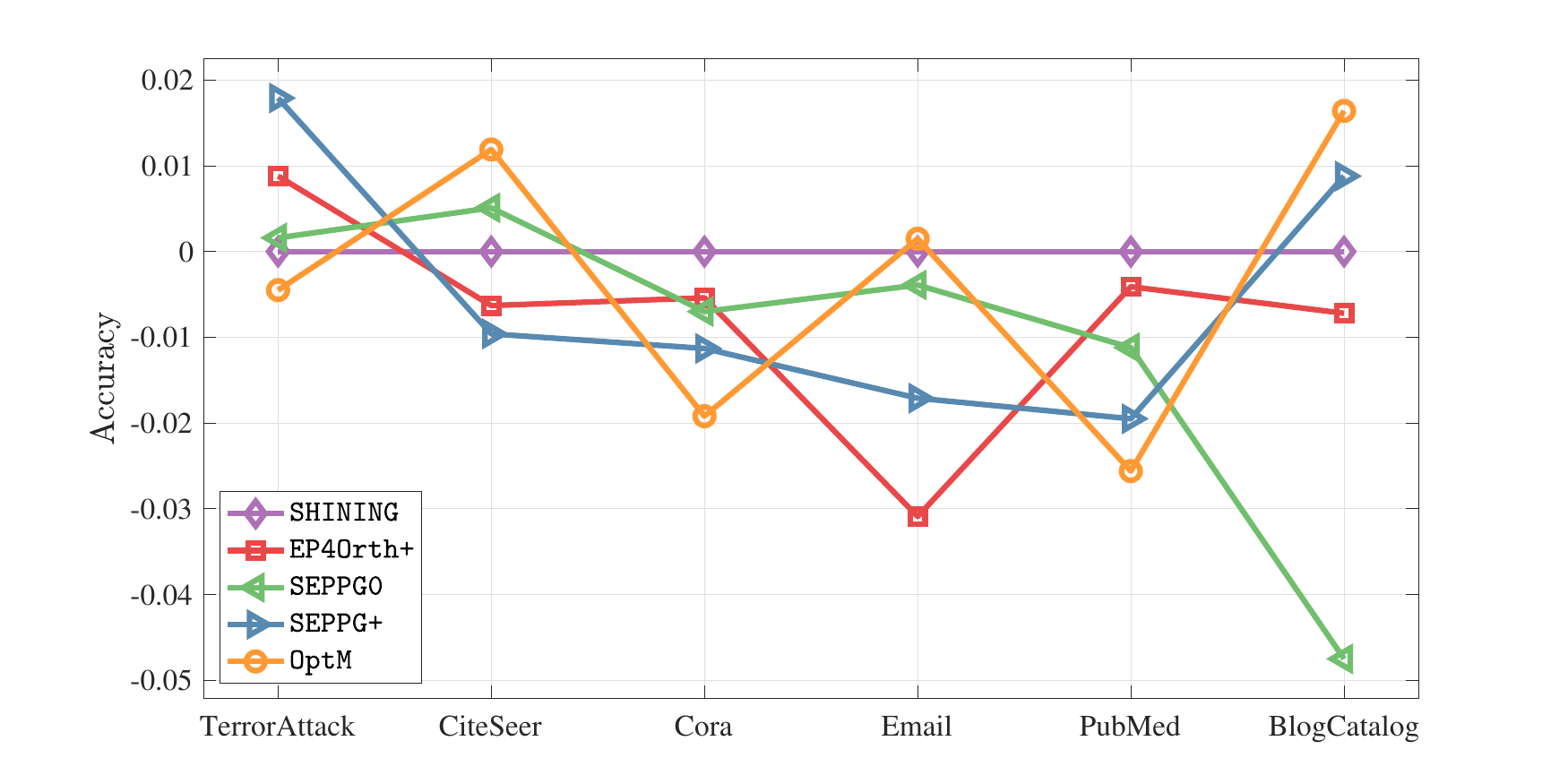}
	}
	\caption{Numerical comparison of different algorithms for community detection.}
	\label{fig:cd}
\end{figure}

\section{Concluding Remarks}

\label{sec:conclusion}

In this paper, we propose a principled and feasible algorithm \SUPPORT for problem \eqref{opt:stplus} by leveraging the property that each row of a matrix in $\Opnp$ contains at most one nonzero entry. 
Our algorithm systematically updates the positions of nonzero entries by monitoring the objective function value and the support set of the current iterate. 
For rows whose norms are close to zero, the nonzero entry is relocated to a column that results in a further decrease of the objective function value. 
For zero rows, a specific position is directly activated to introduce a nonzero entry. 
Once the support set is updated, the proximal linearization of the objective function is minimized within it until no sufficient reduction is observed, where the corresponding subproblem admits a closed-form solution. 
We establish the subsequence convergence and iteration complexity of \SUPPORT to a first-order stationary point. 
The convergence of the whole sequence can be guaranteed when the objective function is semi-algebraic. 
In addition, our algorithm is capable of identifying the support of stationary points in a finite number of iterations. 
Numerical experiments demonstrate that \SUPPORT has a strong potential to deliver a cutting-edge performance in real-world applications. 
For future studies, we are interested in developing second-order algorithms to solve optimization problems with nonnegative and orthogonal constraints.

\section*{Acknowledgments}

We would like to express our gratitude to Dr. Yitian Qian and Dr. Lianghai Xiao for kindly sharing the codes of \SEPPGz and \SEPPGp.


\bibliographystyle{abbrv}

\bibliography{Library_SuppS}

\addcontentsline{toc}{section}{References}

\end{document}